\def\R {\mathbb{R}}
\def\C {\mathcal{C}}
\def\eps{\varepsilon}
\def\dist{{\rm dist}}
\newcommand{\loc}{\mathrm{loc}}
\newcommand{\Lip}{\mathrm{Lip}}
\newcommand{\pa}{\partial}
\newcommand{\mf}[1]{\mathbf{#1}}
\newtheorem{proposition}{Proposition}[section]
\newtheorem{theorem}[proposition]{Theorem}
\newtheorem*{theorem*}{Theorem}
\newtheorem{corollary}[proposition]{Corollary}
\newtheorem{lemma}[proposition]{Lemma}
\theoremstyle{definition}
\newtheorem{definition}[proposition]{Definition}
\newtheorem{remark}[proposition]{Remark}
\numberwithin{equation}{section}
\title[Phase separation: asymptotic and geometric aspects]{On phase separation in systems of coupled elliptic equations: asymptotic analysis and geometric aspects}
\subjclass[2010]{35B45, 35B65, 35R35 (Primary), 35B08, 35B36, 35B25, 35J47}
\keywords{Nonlinear Schr\"odinger systems, Harmonic maps into singular manifolds, Competition and segregation, Point-wise asymptotic estimates, Regularity of free boundaries}
\author{Nicola Soave and Alessandro Zilio}
\address{
\hbox{\parbox{5.7in}{\medskip\noindent
 Nicola Soave\\
Mathematisches Institut, Justus-Liebig-Universit\"at Giessen, \\
Arndtstrasse 2, 35392 Giessen (Germany),\\[2pt]
{\em{E-mail address: }}{\tt nicola.soave@gmail.com, nicola.soave@math.uni-giessen.de.}\\[5pt]
Alessandro Zilio\\
Centre d'analyse et de math\'{e}matique sociales\\
\'{E}cole des Hautes \'{E}tudes en Sciences Sociales\\
190-198 Avenue de France, 75244, Paris CEDEX 13 (France) \\
{\em{E-mail address: }}{\tt azilio@ehess.fr, alessandro.zilio@polimi.it.}}}}
\begin{document}

\begin{abstract}
We consider a family of positive solutions to the system of $k$ components
\[
	-\Delta u_{i,\beta} = f(x, u_{i,\beta}) - \beta u_{i,\beta} \sum_{j \neq i} a_{ij} u_{j,\beta}^2 \qquad \text{in $\Omega$},
\]
where $\Omega \subset \R^N$ with $N \ge 2$. It is known that uniform bounds in $L^\infty$ of $\{\mathbf{u}_{\beta}\}$ imply convergence of the densities to a segregated configuration, as the competition parameter $\beta$ diverges to $+\infty$. In this paper 
we establish sharp quantitative point-wise estimates for the densities around the interface between different components, and we characterize the asymptotic profile of $\mf{u}_\beta$ in terms of entire solutions to the limit system
\[
	\Delta U_i = U_i \sum_{j\neq i} a_{ij} U_j^2.
\]
Moreover, we develop a uniform-in-$\beta$ regularity theory for the interfaces.
\end{abstract}

\maketitle

\section{Introduction}

The aim of this paper is to prove qualitative properties of positive solutions to competing systems with variational interaction, whose prototype is the coupled Gross-Pitaevskii equation
\[
\begin{cases}
-\Delta u_{i,\beta} +\lambda_{i,\beta} u_{i,\beta} = \mu_i u_{i,\beta}^3 -\beta u_{i,\beta} \sum_{j \neq i} a_{ij} u_{j,\beta}^2 & \text{in $\Omega$} \\
u_i >0 & \text{in $\Omega$},
\end{cases}  \quad i=1,\dots,k,
\]
in the limit of strong competition $\beta \to +\infty$. This problem naturally arises in different contexts: from the physics world, it is of interest in nonlinear optics and in the Hartree-Fock approximation for Bose-Einstein condensates with multiple hyperfine states, see e.g. \cite{AkAn, Timm}. From a mathematical point of view, it is useful in the approximation of optimal partition problems for Laplacian eigenvalues, and in the theory of harmonic maps into singular manifolds, see \cite{CaffLin, CoTeVe2002,CoTeVe2003, rtt,TaTePoin}. Several papers are devoted to the development of a common regularity theory for families of solutions associated to families of parameters $\beta \to +\infty$, to the analysis of the convergence of such families to some limit profile, and to the regularity issues for the emerging free-boundary problem, see \cite{CaffLin, CaffLin2010, ChangLinLinLin, CoTeVe2002, CoTeVe2003, NoTaTeVe, rtt, SoTaTeZi, SoZi, WeiWeth}. On the other hand, not much is known about finer qualitative properties, such as:
\begin{itemize}
\item the decay rate of convergence of the solutions,
\item the geometric structure of the solutions in a neighbourhood of the ``interface" between different components (a concept which will be conveniently defined), 
\item the geometric structure of the interface itself.
\end{itemize}
To our knowledge, the only contribution dealing with this kind of problem is \cite{BeLiWeZh}, where Berestycki et al. considered the $1$-dimensional system
\begin{equation}\label{main system}
\begin{cases}
-w_{1,\beta}'' +\lambda_{1,\beta} w_{1,\beta} = \mu_1 w_{1,\beta}^3-\beta w_{1,\beta} w_{2,\beta}^2 & \text{in $(0,1)$} \\
-w_{2,\beta}'' +\lambda_{2,\beta} w_{2,\beta} = \mu_2 w_{2,\beta}^3-\beta w_{1,\beta}^2 w_{2,\beta} & \text{in $(0,1)$} \\
w_{i,\beta},>0  \quad  \text{in $(0,1)$}, \quad w_i \in H_0^1(0,1) & i=1,2 \\
\int_0^1 w_{1,\beta}^2 = \int_0^1 w_{2,\beta}^2 = 1.
\end{cases}
\end{equation}
Under the assumption that $(\lambda_{1,\beta})$ and $(\lambda_{2,\beta})$ are bounded sequences, they proved that if $x_\beta \in \{w_{1,\beta}=w_{2,\beta}\}$ (the interface between $w_{1,\beta}$ and $w_{2,\beta}$), then there exists $C>1$ such that
\begin{equation}\label{eq: lower ext}
\frac{1}{C} \le \beta w_{1,\beta}^2(x_\beta) w_{2,\beta}^2(x_\beta) \le C \quad \forall \beta > 0;
\end{equation}
that is, any family of solutions decays, along sequences of points where $w_{1,\beta}=w_{2,\beta}$, like $\beta^{-1/4}$, see \cite[Theorem 1.1]{BeLiWeZh}. Furthermore, they showed that suitable scalings of $(w_{1,\beta},w_{2,\beta})$ in a neighbourhood of the interface converge, in $\mathcal{C}^2_{\loc}(\R)$, to an entire solution of
\begin{equation}\label{entire 1-d intro}
\begin{cases}
W_1'' = W_1 W_2^2 \\
W_2'' = W_1^2 W_2  & \text{in $\R$}\\
W_1, W_2 >0,
\end{cases} 
\end{equation}
see \cite[Theorem 1.2]{BeLiWeZh}. This means that the geometry of the solutions to \eqref{entire 1-d intro} is related to the geometry of the solutions to \eqref{main system} near the interface; and in this perspective it is remarkable that, up to scaling, translations and exchange of the components, \eqref{entire 1-d intro} has only one solution, see \cite[Theorem 1.1]{BeTeWaWe}.



The purpose of this paper is to generalize the analysis in \cite{BeLiWeZh} in higher dimension and to $k \ge 2$ components systems with general form. In order to present and motivate our study, we introduce some notation and review some known results. For simplicity, in the rest of the paper the expression ``up to a subsequence" will be understood without always being mentioned.

We consider weak solutions to
\begin{equation}\tag{$P_\beta$}\label{main system k comp}
\begin{cases}
-\Delta u_{i,\beta} = f_{i,\beta}(x,u_{i,\beta})-\beta u_{i,\beta} \sum_{j \neq i} a_{ij} u_{j,\beta}^2 & \text{in $\Omega$} \\
u_{i,\beta}>0 & \text{in $\Omega$},
\end{cases} \quad i=1,\dots,k,
\end{equation}
where $a_{ij}=a_{ji}>0$, $\beta>0$, and $\Omega$ is a domain of $\R^N$ neither necessarily bounded, nor necessarily smooth, with $N \le 4$. Since any coupling parameter $\beta a_{ij}$ is positive, with the considered sign convention the relation between any pair of densities $u_{i,\beta}$ and $u_{j,\beta}$ is of competitive type. Concerning the nonlinearities $f_{i,\beta}$, we always assume that $f_{i,\beta} \in \C^1(\Omega \times \R)$ are such that:
\begin{itemize}
\item[(F1)] $f_{i,\beta}(x,s) = O(s)$ as $s \to 0$, uniformly in $x \in \Omega$, that is there exists $C>0$ such that
    \[
        \max_{s \in [0,1]} \sup_{x \in \Omega} \left|\frac{f_{i,\beta}(x,s)}{s}\right| \leq C \qquad i=1,\dots,k;
    \]
\item[(F2)] for any sequence $\beta \to +\infty$ there exist a subsequence (still denoted $\beta$) and functions $f_i \in \mathcal{C}^1(\Omega \times \R)$ such that $f_{i,\beta} \to f_i$ in $\mathcal{C}_{\loc}(\Omega \times \R)$.
\end{itemize}

We explicitly remark that for the nonlinearity appearing in the classical Gross-Pitaevskii equation, i.e.
\[
f_{i,\beta}(x,s) = \mu_i s^3-\lambda_{i,\beta} s
\]
with $\mu_i,\lambda_{i,\beta} \in \R$, both (F1) and (F2) are satisfied provided $\{\lambda_{i,\beta}\}$ is bounded. This is exactly the assumption in \cite{BeLiWeZh}.

Let us suppose that
\begin{equation}\tag{U1}\label{boundedness}
\text{$\{\mf{u}_\beta:\beta >0\}$ is a family of solutions to \eqref{main system k comp}, uniformly bounded in $L^\infty(\Omega)$},
\end{equation} 
where we used the vector notation $\mf{u}_\beta=(u_{1,\beta},\dots,u_{k,\beta})$. Then, as we showed in \cite{SoZi}, for any compact $K \Subset \Omega$ we have that $\{\mf{u}_\beta\}$ is uniformly bounded in $\Lip(K)$. As a consequence, it is possible to infer that there exists a locally Lipschitz continuous limit $\mf{u}$ such that $\mf{u}_{\beta} \to \mf{u}$ as $\beta \to +\infty$ in $\mathcal{C}^{0,\alpha}_{\loc}(\Omega)$ (for every $0<\alpha<1$) and in $H^1_{\loc}(\Omega)$, and
\begin{equation}\label{limit system}
\begin{cases}
-\Delta u_i  = f_i(x,u_i) & \text{in $\{u_{i}>0\}$} \\
u_{i} \cdot u_{j} \equiv 0  & \text{in $\Omega$, for every $i \neq j$},
\end{cases}
\end{equation}
where $f_i$ is the limit of the considered sequence $\{f_{i,\beta}\}$ (see \cite{NoTaTeVe,SoTaTeZi,Wa}).

In the present paper we always assume that (F1), (F2) and \eqref{boundedness} are satisfied, and therefore we will not explicitly recall them in all our statements. Moreover, from now on we shall always focus on a particular converging subsequence, and on the corresponding limit profile, without changing the notation for the sake of simplicity. 

Since the limit $\mf{u}$ is segregated, it is natural to define the \emph{nodal set}, or \emph{free-boundary}, as $\Gamma:= \{ u_{i}=0 \ \text{for every $i$}\}$. The properties of the free-boundary were studied in \cite{tt} (see also \cite{CaffLin}). As limit of strongly competing system, \cite[Theorem 8.1]{tt} establishes that $\mf{u}$ belongs to a class of segregated vector valued functions, called $\mathcal{G}(\Omega)$ (see Definition 1.2 in \cite{tt}), and hence the nodal set $\Gamma$ has the following properties: it has Hausdorff dimension $N-1$, and it is decomposed into two parts $\mathcal{R}$ and $\Sigma$. The set $\mathcal{R}$, called \emph{regular part}, is relatively open in $\Gamma$ and is the union of hyper-surfaces of class $\mathcal{C}^{1,\alpha}$ for every $0<\alpha<1$. The set $\Sigma=\Gamma \setminus \mathcal{R}$, the \emph{singular part}, is relatively closed in $\Gamma$ and has Hausdorff dimension at most $N-2$. By means of the \emph{Almgren's frequency function}
\begin{equation}\label{def: Almgren intro}
N(\mf{u},x_0,r):= \frac{r \int_{B_r(x_0)}  \sum_{i=1}^k \left( |\nabla u_i|^2 - f_i(x,u_i) u_i   \right)    }{ \int_{\pa B_r(x_0)} \sum_{i=1}^k u_i^2},
\end{equation}
regular and singular part are defined by
\[
\mathcal{R} := \left\{ x \in \Gamma: N(\mf{u},x_0,0^+)=1 \right\}, \quad \quad
\Sigma:= \left\{ x \in \Gamma: N(\mf{u},x_0,0^+)>1 \right\}.
\]
Combining the results in \cite{tt} with those in Section 10 of \cite{DaWaZh}, it is possible to deduce also that every point $x_0 \in \mathcal{R}$ has multiplicity exactly equal to $2$, that is 
\[
\# \left\{i=1,\dots,k: \text{ for every $r>0$ it results $B_r(x_0) \cap \{u_i>0\} \neq \emptyset$}\right\}=2.
\]
This prevents in particular the occurrence of self-segregation.

\subsection{Main results: asymptotic estimates}

For a large part of the paper we will be interested in studying the decay rate of the sequence $\{\mf{u}_\beta\}$ as $\beta \to +\infty$ in a neighbourhood of points of the free-boundary $\Gamma=\{\mf{u}=\mf{0}\}$. As already recalled, the only results available in this context are those contained in \cite[Theorem 1.1]{BeLiWeZh}. We mention that decay estimates are not only relevant for themselves, but are useful since they suggest the correct asymptotic behaviour in some approximated optimal partition problems, finally leading to powerful monotonicity formulae for competing systems, see \cite[Theorem 1.6]{BeLiWeZh}, \cite[Theorem 5.6]{BeTeWaWe} and \cite[Lemma 4.2]{Wa}.

In what follows we discuss the generalization of the analysis in \cite{BeLiWeZh} in higher dimension. Already in the plane, the situation is much more involved with respect to the $1$-dimensional problem: first, due to the richer structure of the free-boundary $\Gamma$.
Second, due to the fact that we deal with more than $2$ components, so that, as we shall see, we have to distinguish the dominating functions (which will have a suitable decay) from the other ones (which will decay much faster). Finally due to fact that the Hamiltonian structure of the problem, one of the key tools used in \cite{BeLiWeZh} for the proof of \eqref{eq: lower ext} in dimension $N=1$, is lost for general nonlinearities $f_{i,\beta}$, and in any case is much less powerful in subsets of $\R^N$ with $N \ge 2$ than in $\R$ (we point out that for general $f_{i,\beta}$ the forthcoming results are new results also in dimension $N=1$). 

In order to overcome these difficulties, we develop a new approach based upon monotonicity formulae and tools from geometric measure theory.

The first of our results is a consequence of the uniform Lipschitz boundedness of $\{\mf{u}_\beta\}$ in compacts of $\Omega$, see \cite{SoZi}, and extends the upper estimate in \eqref{eq: lower ext} to the present setting.

\begin{theorem}\label{thm: global upper estimate}
For every compact set $K \Subset \Omega$ there exists $C >0$ such that
\[
\beta u_{i,\beta}^2 u_{j,\beta}^2 \le C \quad \text{in $K$, for every $i \neq j$}.
\]
\end{theorem}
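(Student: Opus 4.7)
The plan is to derive the pointwise bound directly from the uniform Lipschitz estimate of \cite{SoZi} via a divergence-theorem argument on a carefully sized ball, using the equation to transfer information from a single point to a small neighborhood and back.

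Concretely, I would fix compacts $K \Subset \interior K' \Subset \Omega$, denote by $L>0$ a uniform Lipschitz constant of $\{\mf{u}_\beta\}$ on $K'$, set $M:=\sup_\beta \|\mf{u}_\beta\|_{L^\infty(\Omega)}$, and $R:=\tfrac12\dist(K,\pa K')$. Given $x_0\in K$ and $i\neq j$, write $t:=u_{i,\beta}(x_0)$, $s:=u_{j,\beta}(x_0)$, assuming $s\le t$ without loss of generality. Then choose the radius
\[
    r := \min\{s/(2L),\, R\},
\]
so that $B_r(x_0)\subset K'$ and the Lipschitz bound forces $u_{j,\beta}\ge s/2$ and $u_{i,\beta}\ge t/2$ throughout $B_r(x_0)$.

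Next, I would integrate the $j$-th equation of \eqref{main system k comp} on $B_r(x_0)$ and use the divergence theorem:
\[
    \beta \int_{B_r(x_0)} u_{j,\beta}\sum_{i'\ne j} a_{ji'} u_{i',\beta}^2 \,dx
    = \int_{B_r(x_0)} f_{j,\beta}(x,u_{j,\beta})\,dx + \int_{\pa B_r(x_0)} \pa_\nu u_{j,\beta}\,d\sigma.
\]
Assumption (F1) bounds the volume term by $CM\,|B_r(x_0)|$, while the Lipschitz estimate bounds the boundary flux by $L\,|\pa B_r(x_0)|$. Dropping all indices $i'\neq i$ on the left and inserting the lower bounds on $u_{i,\beta}$, $u_{j,\beta}$ yields
\[
    \beta\,a_{ij}\,\tfrac{s}{2}\Bigl(\tfrac{t}{2}\Bigr)^{\!2}\,|B_r(x_0)|
    \le C M\,|B_r(x_0)| + L\,|\pa B_r(x_0)|.
\]
Dividing by $|B_r(x_0)|$ produces an inequality of the form $\beta\, a_{ij}\, s\, t^2 \lesssim M + L/r$. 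Multiplying by $s$ and exploiting $s/r\le \max\{2L,\,M/R\}$ (coming from the two possible regimes in the definition of $r$) gives
\[
    \beta\,a_{ij}\, s^2 t^2 \le C',
\]
with $C'$ depending only on $K$, $K'$, $L$, $M$, $N$ and the constant from (F1). Since $u_{i,\beta}^2 u_{j,\beta}^2(x_0)=s^2 t^2$, this is exactly the claim.

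I do not expect a genuine obstacle: once the uniform Lipschitz estimate of \cite{SoZi} is invoked, the argument is essentially a mean-value/divergence-theorem calculation, and the statement is really a corollary of that bound. The only mildly delicate point is the choice of $r$, which has to be simultaneously small enough for the Lipschitz-based lower bounds on $u_{i,\beta}$ and $u_{j,\beta}$ to be valid on $B_r(x_0)$ and small enough to keep $B_r(x_0)\subset K'$; the $\min$ defining $r$ handles both regimes without splitting into cases.
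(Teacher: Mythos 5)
Your argument is correct, but it follows a genuinely different and more elementary route than the paper's. The paper proves Theorem \ref{thm: global upper estimate} by contradiction: assuming $\beta_n^{1/2}u_{i,n}(x_n)u_{j,n}(x_n)\to+\infty$, it first shows (via Lemma \ref{lem: decay}) that all components must vanish at $x_n$, then rescales by $\eps_n=\sum_i u_{i,n}(x_n)$, uses the uniform Lipschitz bound to get local compactness of the blow-up sequence, invokes the segregation results of \cite{SoTaTeZi,NoTaTeVe} to identify a segregated limit with exactly one nonvanishing component at $0$, and finally applies the exponential decay Lemma \ref{lem: decay} once more to contradict the assumed divergence. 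You bypass all of this: integrating the equation for the smaller component over a ball whose radius is tied to its value at the center, and pairing the divergence theorem with the same uniform Lipschitz estimate of \cite{SoZi}, you obtain $\beta a_{ij} s t^2\lesssim M+L/r$ and hence $\beta s^2t^2\le C$ directly, with an explicit constant and no compactness, blow-up, or decay lemma. Both proofs hinge on the Lipschitz bound, but yours is quantitative, shorter, and avoids subsequence extraction in the main estimate. One small point to tidy up: (F1) controls $f_{j,\beta}(x,s)$ only for $s\in[0,1]$, so when $\sup_\beta\|\mf{u}_\beta\|_{L^\infty}>1$ the bound $|f_{j,\beta}(x,u_{j,\beta})|\le CM$ on $K'$ needs (F2) as well (e.g.\ argue by contradiction and extract the subsequence on which $f_{j,\beta}$ converges locally uniformly, noting the claim is trivial for $\beta$ in a bounded range); this is the same implicit step the paper takes in \eqref{elliptic inequality}, and it disappears for the simplified system \eqref{system simplified}.
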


Notice that, by the lower estimate in \eqref{eq: lower ext}, the result is optimal in general.

In order to derive finer properties, we introduce the concept of \emph{interface} of $\mf{u}_\beta$. 

\begin{definition}\label{def interface}
We define the \emph{interface} of $\mf{u}_\beta$ as 
\[
\Gamma_\beta:= \left\{ x \in \Omega \left| \begin{array}{l}\text{$u_{i,\beta}(x) = u_{j,\beta}(x)$ for some $i \neq j$} \\
\text{and $u_{i,\beta}(x) \ge u_{l,\beta}(x)$ for all the other indices $l$}\end{array}\right. \right\}
\]
\end{definition}

Roughly speaking, a point $x$ is on the interface of $\mf{u}_\beta$ if at least two components coincide in $x$, and the remaining ones are smaller. Notice that, if the number of components is $k=2$, then the interface is naturally defined as 
\[
\Gamma_\beta:= \{ u_{1,\beta} = u_{2,\beta}\}.
\]

As we shall see, the interface plays the role of the free boundary $\Gamma=\{\mf{u}=\mf{0}\}$ for the $\beta$-problem \eqref{main system k comp}. A simple intuitive reason for this is that any converging sequence of points in $\Gamma_\beta$ necessarily converges to a limit in $\Gamma$. Moreover, if $x \in \Gamma$, then there exists a sequence of points in $\Gamma_\beta$ approaching $x$.

\begin{proposition}\label{prop: interfaces are good approximation}
If $x_\beta \in \Gamma_\beta$ and $x_\beta \to x_0 \in \Omega$ as $\beta \to +\infty$, then $x_0 \in \Gamma$. Moreover, if $\mf{u} \not \equiv \mf{0}$, then for any $x_0 \in \Gamma$ there exists $x_\beta \in \Gamma_\beta$ such that $x_\beta \to x_0$.
\end{proposition}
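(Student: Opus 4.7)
The statement consists of two implications which I plan to treat separately. For the first one, starting from $x_\beta \in \Gamma_\beta$ with $x_\beta \to x_0$, Definition \ref{def interface} supplies indices $i_\beta, j_\beta$ realizing the maximum of $\mf{u}_\beta(x_\beta)$; after passing to a subsequence I may take them constant, say $i$ and $j$. The plan is then essentially a one-line application of Theorem \ref{thm: global upper estimate} on a compact neighborhood $K \Subset \Omega$ of $x_0$: the estimate
\[
\beta\, u_{i,\beta}(x_\beta)^{4} \;=\; \beta\, u_{i,\beta}(x_\beta)^{2} u_{j,\beta}(x_\beta)^{2} \;\leq\; C
\]
forces $u_{i,\beta}(x_\beta) \to 0$, and every other component being dominated by this value, the $\mathcal{C}^{0,\alpha}_{\loc}$ convergence of $\mf{u}_\beta$ to $\mf{u}$ yields $u_l(x_0) = 0$ for every $l$, i.e.\ $x_0 \in \Gamma$.

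For the converse, fix $x_0 \in \Gamma$ and $r > 0$ with $\overline{B_r(x_0)} \subset \Omega$. My strategy is an intermediate-value argument on a scalar continuous function. First I would exhibit distinct indices $i \neq j$ and points $y_i, y_j \in B_r(x_0)$ with $u_i(y_i) > 0$ and $u_j(y_j) > 0$ --- this is the step where the hypothesis $\mf u \not\equiv 0$ enters. Granted this, I would consider
\[
h_\beta(x) := u_{i,\beta}(x) - \max_{l \neq i} u_{l,\beta}(x),
\]
which is continuous. By segregation of $\mf u$, $u_l(y_i) = 0$ for $l \neq i$, so $h_\beta(y_i) \to u_i(y_i) > 0$ and, analogously, $h_\beta(y_j) \to -u_j(y_j) < 0$. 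For all sufficiently large $\beta$ the intermediate value theorem, applied along the segment joining $y_i$ to $y_j$ inside $B_r(x_0)$, produces $x_\beta$ with $h_\beta(x_\beta) = 0$; any index $l^*$ attaining the maximum in $h_\beta(x_\beta)$ then satisfies $u_{i,\beta}(x_\beta) = u_{l^*,\beta}(x_\beta) \geq u_{l,\beta}(x_\beta)$ for every $l$, placing $x_\beta \in \Gamma_\beta \cap B_r(x_0)$. A diagonal extraction over $r = 1/n$ completes the construction.

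The main obstacle is the preliminary step in the converse: producing two adjacent components with positive values in every $B_r(x_0)$. My plan is to argue by contradiction: if only one index $i$ had positive values on $B_r(x_0)$, then $u_l \equiv 0$ on $B_r(x_0)$ for all $l \neq i$, and the class-$\mathcal{G}(\Omega)$ differential inequalities from \cite{tt} collapse to the pointwise equation $-\Delta u_i = f_i(x,u_i)$ in $B_r(x_0)$; writing $f_i(x, u_i) = c(x)\, u_i$ with $c$ bounded via (F1) and applying the strong maximum principle to the nonnegative $u_i$, which vanishes at the interior point $x_0$, forces $u_i \equiv 0$ on $B_r(x_0)$. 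Consequently $B_r(x_0) \subset \Gamma$, contradicting the empty-interior property of $\Gamma$ that follows from the Hausdorff dimension bound $\dim_{\mathcal{H}} \Gamma = N-1$ of \cite{tt}, valid precisely because $\mf u \not\equiv 0$. Once this preliminary step is secured, the remainder of the argument is essentially topological and routine.
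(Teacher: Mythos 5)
Your first implication is correct: the identity $u_{i,\beta}(x_\beta)=u_{j,\beta}(x_\beta)\ge u_{l,\beta}(x_\beta)$ combined with Theorem \ref{thm: global upper estimate} (which is proved beforehand and independently, so there is no circularity) gives $u_{l,\beta}(x_\beta)\to 0$ for all $l$, hence $x_0\in\Gamma$ by uniform convergence; in fact the theorem is not even needed, since $u_i(x_0)=u_j(x_0)$ together with $u_iu_j\equiv 0$ already forces both values to vanish, which is essentially the paper's one-line argument. Likewise, your intermediate-value construction of interface points, and the diagonal extraction, are sound \emph{provided} one can produce, in every ball $B_r(x_0)$, two distinct components attaining positive values; this is a legitimate alternative to the paper's route (the paper instead assumes $\dist(x_0,\Gamma_\beta)\ge\delta$ and notes that then a single component dominates on all of $B_{\delta/2}(x_0)$), and both routes funnel into the same core difficulty.

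The genuine gap is exactly at the step you call the main obstacle: the claim that the ``class-$\mathcal{G}(\Omega)$ differential inequalities collapse to the pointwise equation $-\Delta u_i=f_i(x,u_i)$ in $B_r(x_0)$'' when all other components vanish there. Membership in $\mathcal{G}(\Omega)$ (Definition 1.2 in \cite{tt}) only provides $-\Delta u_i=f_i(x,u_i)-\mu_i$ with $\mu_i$ a nonnegative measure concentrated on the nodal set, together with the Pohozaev/reflection identity; that is, the equation in $\{u_i>0\}$ and the one-sided bound $-\Delta u_i\le f_i(x,u_i)$ in the ball. This one-sided inequality is in the wrong direction for your maximum-principle step (a nonnegative function with $-\Delta v\le Cv$ may vanish at an interior point without vanishing identically, e.g.\ $v(x)=|x|^2$), and the missing reverse inequality is precisely the exclusion of self-segregation: the configuration $u_i=|x_N|$, $u_j\equiv 0$ for $j\neq i$, lies in the class (its defect measure is a multiple of the surface measure of $\{x_N=0\}$, and at the level of $\sum_i|\nabla u_i|^2$ it is indistinguishable from the admissible pair $(x_N^+,x_N^-)$), yet it does not satisfy the equation across $\{u_i=0\}$. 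The paper itself stresses that ruling out self-segregation is \emph{not} a consequence of \cite{tt} alone but requires \cite[Section 10]{DaWaZh}, and its proof of this proposition is built exactly on that: if $B\cap\mathcal{R}\neq\emptyset$ one contradicts the multiplicity-two property at regular points; hence $B\cap\Gamma\subset\Sigma$, whose Hausdorff dimension is at most $N-2$, so it has zero capacity and the equation, valid on $B\setminus\Sigma$ where $u_i>0$, extends to the whole ball; only then does the strong maximum principle yield the contradiction. To repair your argument you must insert this multiplicity-plus-capacity step (or an equivalent substitute); with it, the remainder of your plan goes through.
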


%

In what follows we consider the problem of estimating the rate of convergence of $\mf{u}_\beta$ in sequences of points on the interfaces $\Gamma_\beta$. By Theorem \ref{thm: global upper estimate}, if $x_\beta \in \Gamma_\beta$, then
\begin{equation}\label{eq: general decay}
u_{i,\beta}(x_\beta) \le \frac{C}{\beta^{1/4}} \qquad \text{for every $i=1,\dots,k$}.
\end{equation}

This estimate holds for all the components $u_{i,\beta}$. On the other hand, since on the interface we have two (or more) components dominating over the others, it is natural to expect that for the remaining ones the rate of convergence to $0$ is faster. We can prove this assuming that $x_\beta \to x_0 \in \mathcal{R}$, the regular part of $\Gamma$; recall that in this case $x_0$ has multiplicity $2$.

\begin{theorem}\label{corol: decay components vanishing}
Let  $x_0 \in \mathcal{R}$. Let $i_1$ and $i_2$ be the only two indices such that $u_{i_1}, u_{i_2} \not \equiv 0$ in a neighbourhood of $x_0$. There exist a radius $R>0$ and a constant $C>0$ independent of $\beta \gg 1$ such that:
\begin{itemize}
\item $B_R(x_0) \cap \Gamma_\beta = \{ u_{i_1, \beta} = u_{i_2, \beta} \} \cap B_R(x_0)$, and moreover $B_R(x_0) \setminus \Gamma_\beta$ is constituted exactly by two connected components which are $\{ u_{i_1, \beta} > u_{i_2, \beta} \} \cap B_R(x_0)$ and $\{ u_{i_1, \beta} < u_{i_2, \beta} \} \cap B_R(x_0)$;
	\item for any $j \neq i_1, i_2$, the density $u_{j,\beta}$ decays exponentially, in the sense that there exists $C_1,C_2>0$ independent of $\beta$ such that
	\[
\sup_{B_R(x_0)} u_{j,\beta} \le C_1 e^{-C_1 \beta^{C_2}}. 
\]
	\item in $B_R(x_0)$ the system reduces to 
	\[
		\begin{cases}
			-\Delta u_{i_1,\beta} =f_{i_1,\beta}(x,u_{i_1,\beta}) - \beta u_{i_1,\beta} u_{i_2,\beta}^2 - u_{i_1,\beta}o_\beta(1) \\
			-\Delta u_{i_2,\beta} =f_{i_2,\beta}(x,u_{i_2,\beta}) - \beta u_{i_2,\beta} u_{i_1,\beta}^2 - u_{i_2,\beta}o_\beta(1)
		\end{cases}
	\]
	where $o_\beta(1)$ is a exponentially small perturbation in the $L^{\infty}$-norm.
\end{itemize}
\end{theorem}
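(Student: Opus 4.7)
The plan is to first invoke the regularity theory at $x_0$ to reduce to a two-component configuration in a ball $B_R(x_0)$, then to prove exponential decay of $u_{j,\beta}$ for $j\neq i_1,i_2$ by an iterated Caccioppoli estimate with $\beta$-dependent radius step, and finally to deduce the interface characterisation and the reduced system as consequences of this decay.

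Since $x_0\in\mathcal{R}$ has multiplicity $2$, the results recalled from \cite{tt} provide $R>0$ such that, in $B_R(x_0)$, only $u_{i_1}$ and $u_{i_2}$ are non-trivial, the nodal set $\Gamma\cap B_R$ is a $C^{1,\alpha}$ graph, and the blow-up classification at regular points furnishes the non-degeneracy bound $u_{i_1}+u_{i_2}\geq c_0\dist(\cdot,\Gamma)$ on $B_{R/2}$. Combining with the uniform $C^0$-convergence $\mf{u}_\beta\to\mf{u}$ yields, for every $\delta>0$ and $\beta$ large,
\[
\max(u_{i_1,\beta}(x),u_{i_2,\beta}(x))\geq \tfrac{c_0}{4}\delta \quad \text{on } B_{R/2}\cap\{\dist(\cdot,\Gamma)\geq\delta\},
\]
and $\|u_{j,\beta}\|_{L^\infty(B_R)}\to 0$ for every $j\neq i_1,i_2$. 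For the exponential decay, by (F1) each such $u_{j,\beta}$ is a non-negative sub-solution of $-\Delta u_{j,\beta}+V_\beta u_{j,\beta}\leq 0$ with $V_\beta = \beta\min(a_{j,i_1},a_{j,i_2})(u_{i_1,\beta}^2+u_{i_2,\beta}^2)-c_F$; from the above, $V_\beta\geq c\beta\delta^2$ outside the tube $T_\delta:=\{\dist(\cdot,\Gamma)<\delta\}$, whose Lebesgue measure is $O(\delta)$. Testing against $\eta^2 u_{j,\beta}$ with a cutoff $\eta$ equal to $1$ on $B_{r_1}$ and supported in $B_{r_2}\subset B_R$, and combining with the De Giorgi-Nash-Moser $L^2$-to-$L^\infty$ estimate for sub-solutions of $-\Delta u\leq c_F u$, yields
\[
\|u_{j,\beta}\|_{L^\infty(B_{r_1})}^2 \leq C\Bigl(\frac{1}{\beta\delta^2(r_2-r_1)^2}+\delta\Bigr)\|u_{j,\beta}\|_{L^\infty(B_{r_2})}^2,
\]
where the $\delta$-term absorbs the tube contribution. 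Choosing $r_2-r_1=\beta^{-a}$ and $\delta=\beta^{-b}$ with any $a,b>0$ satisfying $a+b<1/2$ makes the parenthesis smaller than any prescribed fixed constant for $\beta$ large, so iterating across the $O(\beta^{a})$ concentric balls that connect $B_{R/2}$ to $B_R$ produces $\sup_{B_{R/2}}u_{j,\beta}\leq Ce^{-c\beta^{a}}$, which is of the claimed form.

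For the interface, on $\{u_{i_1,\beta}=u_{i_2,\beta}\}\cap B_{R/2}$ the higher-dimensional analogue of the lower bound in \eqref{eq: lower ext} (established elsewhere in the paper) yields $u_{i_1,\beta}\geq c\beta^{-1/4}$, which dominates the exponentially small $u_{j,\beta}$ and therefore places this level set inside $\Gamma_\beta$. Conversely, any point of $\Gamma_\beta\cap B_{R/2}$ at which a pair outside $\{i_1,i_2\}$ were maximal would force all components to be exponentially small there, hence by non-degeneracy $\dist(x,\Gamma)$ exponentially small; but the $C^1_\loc$-convergence of $u_{i_1,\beta}-u_{i_2,\beta}$ to $u_{i_1}-u_{i_2}$ (obtained from standard elliptic regularity away from $\Gamma$, where the competing terms decay exponentially), together with the latter's transversality to $\Gamma$, rules out such points and shows simultaneously that $\{u_{i_1,\beta}=u_{i_2,\beta}\}$ is itself a $C^1$ graph close to $\Gamma$, partitioning $B_R$ into the two claimed sign-components. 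The reduced system is then obtained by substituting the exponential estimate for $u_{j,\beta}$ into \eqref{main system k comp}.

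The decisive difficulty is the quantitative decay: the potential $V_\beta$ is coercive only away from a thin tubular neighbourhood of $\Gamma$, and the tube contribution must be dominated by the gain from that coercivity. The $\beta$-dependent radius step in the iteration is what turns the estimate into a genuine exponential decay rather than merely a polynomial one, at the cost of an exponent $C_2<1/2$; sharpening to $C_2=1/2$ would presumably require a weighted Agmon-type estimate with the geodesic distance for the metric $V_\beta^+\,|dx|^2$, which I do not pursue here.
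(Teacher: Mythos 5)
Your two key steps both lean on quantitative information that is not actually available from the qualitative convergence you invoke, and this is precisely where the paper has to work hardest. For the exponential decay, your iteration needs the lower bound $\max(u_{i_1,\beta},u_{i_2,\beta})\gtrsim\delta$ on $\{\dist(\cdot,\Gamma)\ge\delta\}$ \emph{with} $\delta=\beta^{-b}$; but rate-free uniform $\mathcal{C}^0$ convergence $\mf{u}_\beta\to\mf{u}$ only gives this for each fixed $\delta$ and $\beta\ge\beta(\delta)$, and letting $\delta$ shrink with $\beta$ presupposes a quantitative rate of convergence (or a quantitative non-degeneracy of $\mf{u}_\beta$ itself near $\Gamma$), which is exactly the kind of estimate being proved in this paper, not a hypothesis. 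If instead $\delta$ is kept fixed, the tube term, together with the $L^2$-to-$L^\infty$ constant that blows up polynomially when the radius step is $\beta^{-a}$, is not a small factor, and the scheme only yields decay at fixed positive distance from $\Gamma$ — whereas the theorem asserts decay on all of $B_R(x_0)$, which contains $\Gamma$. The paper supplies the missing quantitative input by a different mechanism: the uniform bound on the Almgren frequency (Lemma \ref{lem: N bounded}) integrated via Proposition \ref{prop: almgren}, combined with the scale $r_\beta$ of Lemma \ref{lem: choice of r} and Lemma \ref{lem: H con m}, gives the \emph{everywhere} lower bound $\sum_i u_{i,\beta}\ge C\beta^{-1/2+\tilde C}$ on a full ball (estimate \eqref{eqn lower clean up} in Proposition \ref{clean up regular}); then the potential is $\ge C\beta^{2\tilde C}$ at every point and Lemma \ref{lem: decay} gives the exponential bound in one stroke, with no tube to excise. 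Nothing in your proposal substitutes for this Almgren-based non-degeneracy.

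The interface characterisation has a second, independent gap. You use (i) a lower bound $u_{i_1,\beta}\ge c\beta^{-1/4}$ on the level set $\{u_{i_1,\beta}=u_{i_2,\beta}\}$ — the paper proves only $\beta^{-1/4-\eps}$, only at points of $\Gamma_\beta$, and through Theorem \ref{thm: lowe estimate}, whose proof invokes the present theorem, so as used this is both unavailable and circular (it could be repaired by the everywhere bound above, or by Theorem \ref{thm: lower general} with $D=1$) — and, more seriously, (ii) uniform $\mathcal{C}^1_{\loc}$ convergence of $u_{i_1,\beta}-u_{i_2,\beta}$ near $\Gamma$, from which you extract transversality and the graph/two-component structure. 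That is exactly what Proposition \ref{prop: non C^1} rules out: the gradients at interface points do not converge to $\nabla(u_{i_1}-u_{i_2})(x_0)$ and the differences are not bounded in $\mathcal{C}^{1,\alpha}$ across the interface; elliptic estimates cannot give uniform $\mathcal{C}^1$ bounds there because the coefficient $\beta u_{i_1,\beta}u_{i_2,\beta}$ is only $O(\beta^{1/2})$. The implicit-function-theorem argument is legitimate only after blowing up at the scale $r_\beta$, where $\mf{v}_\beta\to\mf{V}$ in $\mathcal{C}^2_{\loc}$ and $\mf{V}$ is the one-dimensional profile (this is Lemma \ref{lem: Reif small}); to control scales $r\gg r_\beta$ the paper performs a second blow-up converging to a segregated profile in $\mathcal{G}_{\loc}(\R^N)$ with frequency identically $1$, proves the uniform vanishing Reifenberg flatness of $\mathcal{R}_\beta(\rho)$ (Theorem \ref{thm: reifenberg flat uniform}), and deduces the separation of $B_R$ into exactly two components via the Reifenberg disk argument (Propositions \ref{prp: local sep} and \ref{clean up regular}). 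This multi-scale step is the heart of the first bullet and cannot be bypassed by unit-scale $\mathcal{C}^1$ convergence of the difference.
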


%

The theorem establishes that the components that converge to zero in a neighbourhood of $x_0$ decay much faster (indeed exponentially in $\beta$) than those who survive in the limit. This, although naturally expected, is far from being trivial, and is new also in dimension $N=1$. Moreover, an important consequence of the first point is that in a neighbourhood of any point $x_0 \in \mathcal{R}$ the interfaces $\Gamma_\beta$ do not self-intersect, and separates $B_R(x_0)$ in exactly two connected components.

%
%
We now turn to the problem of extending the lower bound in \eqref{eq: lower ext} to higher dimension. It is interesting that such estimate does not always hold; this is related to the fact that, while in $\R$ the free-boundary is made of single points and is purely regular, in $\R^N$ with $N \ge 2$ the singular part $\Sigma$ appears, and it turns out that therein the decay of the solutions is faster. In order to prove this, we suppose that:
\begin{equation}\tag{U2}\label{nontrivial}
\text{the limit profile of the sequence $\{\mf{u}_\beta\}$ is $\mf{u} \not \equiv \mf{0}$}.
\end{equation}
When compared with the setting considered in \cite{BeLiWeZh}, equation \eqref{main system}, \eqref{nontrivial} reduces to the normalization condition on the $L^2$-mass of the components (our assumption is in fact weaker). 

\begin{theorem}\label{thm: decay higher multiplicity}
Under \eqref{nontrivial}, if $x_\beta \in \Gamma_\beta$ and $x_\beta \to x_0 \in \Sigma$ as $\beta \to +\infty$, then 
\[
\limsup_{\beta \to +\infty} \beta^{1/4} \left(\sum_{i=1}^k u_{i,\beta}(x_\beta)\right) = 0.
\]
\end{theorem}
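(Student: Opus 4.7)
The plan is to argue by contradiction using a blow-up at the scale $r_\beta = \beta^{-1/4}$ together with an Almgren-type monotonicity formula. Suppose $\limsup_\beta \beta^{1/4} \sum_i u_{i,\beta}(x_\beta) > 0$. By the definition of $\Gamma_\beta$, up to extracting a subsequence there are indices $i_1 \neq i_2$ and a constant $c > 0$ with
\[
M_\beta := u_{i_1,\beta}(x_\beta) = u_{i_2,\beta}(x_\beta) \geq u_{l,\beta}(x_\beta) \quad \text{for all $l$}, \qquad \beta^{1/4} M_\beta \geq c,
\]
and Theorem \ref{thm: global upper estimate} forces $M_\beta \sim \beta^{-1/4}$. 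I then introduce the blow-up $v_{i,\beta}(y) := \beta^{1/4} u_{i,\beta}(x_\beta + \beta^{-1/4} y)$, which satisfies
\[
-\Delta v_{i,\beta} = O(\beta^{-3/4}) v_{i,\beta} - v_{i,\beta} \sum_{j \neq i} a_{ij} v_{j,\beta}^2
\]
by (F1). The rescaled version of Theorem \ref{thm: global upper estimate} yields $v_{i,\beta} v_{j,\beta} \leq C$ on compacts of $\R^N$, and combined with the uniform Lipschitz estimate from \cite{SoZi} gives $L^\infty_{\loc}$ bounds on $\mf{v}_\beta$. Elliptic regularity extracts a limit $\mf{v}_\beta \to \mf{V}$ in $\mathcal{C}^2_{\loc}(\R^N)$, with $\mf{V}$ a nonnegative entire solution of $\Delta V_i = V_i \sum_{j\neq i} a_{ij} V_j^2$ in $\R^N$, satisfying $V_{i_1}(0), V_{i_2}(0) \geq c$ and $|\mf{V}(y)| \leq L(1+|y|)$.

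The next step is to apply the Almgren monotonicity formula for entire solutions of the limit system to $\mf{V}$: the frequency
\[
N(\mf{V}, 0, r) = \frac{r \int_{B_r(0)} \left(\sum_i |\nabla V_i|^2 + 2\sum_{i<j} a_{ij} V_i^2 V_j^2\right)}{\int_{\pa B_r(0)} \sum_i V_i^2}
\]
is non-decreasing in $r$, and the linear growth of $\mf{V}$ forces $d^* := \lim_{r \to +\infty} N(\mf{V}, 0, r) \leq 1$. In parallel, I would use the $\beta$-adapted monotonicity formula, which provides a monotone quantity $N_\beta(\mf{u}_\beta, x_\beta, r)$ (up to error terms vanishing as $\beta \to \infty$) enjoying the scale invariance $N(\mf{v}_\beta, 0, R) = N_\beta(\mf{u}_\beta, x_\beta, \beta^{-1/4} R) + o_\beta(1)$, together with the pointwise convergence $N_\beta(\mf{u}_\beta, x_\beta, \rho) \to N(\mf{u}, x_0, \rho)$ as $\beta \to +\infty$ for fixed $\rho > 0$, coming from the $H^1_{\loc}$ convergence $\mf{u}_\beta \to \mf{u}$.

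The desired contradiction would come from the identification $d^* = N(\mf{u}, x_0, 0^+)$: since $x_0 \in \Sigma$ gives $N(\mf{u}, x_0, 0^+) > 1$ while $d^* \leq 1$, this forces $1 < N(\mf{u}, x_0, 0^+) = d^* \leq 1$. The direction $d^* \leq N(\mf{u}, x_0, 0^+)$ follows readily from monotonicity of $N_\beta$: for fixed $R$ and $\rho$ with $\beta^{-1/4} R < \rho$ we have $N(\mf{v}_\beta, 0, R) \leq N_\beta(\mf{u}_\beta, x_\beta, \rho)$, and one passes to the limit $\beta \to \infty$, then $\rho \to 0^+$ and $R \to \infty$. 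The main obstacle is the \emph{reverse} inequality $d^* \geq N(\mf{u}, x_0, 0^+)$: it requires identifying the blow-down at infinity of $\mf{V}$ with the tangent profile of $\mf{u}$ at $x_0$, via a diagonal double-limit argument along $R_\beta \to \infty$ with $\beta^{-1/4} R_\beta \to 0$. Hypothesis (U2) ensures $\mf{u} \not\equiv 0$ so that the tangent profile at $x_0$ is well-defined and nontrivial, while the $\mathcal{G}$-class regularity from \cite{tt} provides the classification and homogeneity of tangent profiles that is needed to read off their Almgren frequency. Making this identification rigorous is the technical heart of the proof.
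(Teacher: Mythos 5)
Your blow-up setup is fine (under the contradiction hypothesis your scale $\beta^{-1/4}$ is comparable to the paper's $r_\beta$, and the Lipschitz bounds do give the linear-growth limit $\mf{V}$ with $N(\mf{V},0,+\infty)\le 1$), and the easy inequality you state, $d^*\le N(\mf{u},x_0,0^+)$, is exactly what the paper proves. But the step you yourself flag as the ``technical heart'' --- the reverse inequality $d^*\ge N(\mf{u},x_0,0^+)$, i.e.\ the identification of the blow-down of $\mf{V}$ with the tangent profile of $\mf{u}$ at $x_0$ --- is a genuine gap, and in fact it is false in general: the paper explicitly remarks, right after proving $d\le D$, that one can construct examples with $d<D$ by translating the centers so that the macroscopic scale and the blow-up scale decouple. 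The obstruction is concrete: the frequency $N(\mf{u}_\beta,x_\beta,s)$ is only known to be monotone and bounded on the intermediate range $r_\beta\ll s\ll 1$, and nothing prevents it from sitting near (or below) $1$ on $(r_\beta,R_\beta)$ and only climbing to $D$ at some scale $R_\beta\to 0$ with $R_\beta/r_\beta\to\infty$; your diagonal limit along $R_\beta\to\infty$, $\beta^{-1/4}R_\beta\to 0$ would require uniform control of the frequency precisely in this regime, which is not available, so the double limits cannot be interchanged and the contradiction $1<D=d^*\le 1$ never materializes.

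The paper's actual proof accepts that $d$ may equal $1$ even though $D\ge 3/2$ and replaces the identification by a quantitative transfer of information across scales. When $d>1$ the conclusion already follows from Proposition \ref{prop 151} (the exponent $(d-\eps)/(2+2d)$ exceeds $1/4$). In the dangerous case $d=1$, where $\mf{V}$ is the unique $1$-dimensional linear-growth profile with two nontrivial components, the paper introduces the transition radii $\rho_\beta\le R_\beta$ (first radii at which $N(\mf{u}_\beta,x_\beta,\cdot)$ exceeds $1$, resp.\ $D-\eps$), and strings together three monotonicity formulae: a perturbed Alt--Caffarelli--Friedman formula (Proposition \ref{prop: ACF}) on the blown-up range $(1,\bar\rho_\beta/3)$, and the monotonicity of $(E+H)/r^{2\gamma}$ (Proposition \ref{prop: monot e+h}) on $(\bar\rho_\beta,\bar R_\beta)$ with $\gamma=1$ and on $(\bar R_\beta,\bar r/r_\beta)$ with $\gamma=D-\eps$. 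This chain yields $H(\mf{u}_\beta,x_\beta,r_\beta)^2\le C\,r_\beta^4 R_\beta^{4(D-\eps-1)}$, and since $\beta H(\mf{u}_\beta,x_\beta,r_\beta)r_\beta^2=1$ and $R_\beta\to 0$ this gives $\beta^2\bigl(\sum_i u_{i,\beta}(x_\beta)\bigr)^8\to 0$, which is the statement. So your proposal would need to be rebuilt around such a multi-scale quantitative argument; as written, its pivotal identification step cannot be carried out.
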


The previous result leaves open the possibility that the lower estimate  \eqref{eq: lower ext} still holds for sequences in $\Gamma_\beta$  converging to the regular part of the free-boundary. We believe that this is the case, but for the moment we can only prove a sub-optimal result.

\begin{theorem}\label{thm: lowe estimate}
Under \eqref{nontrivial}, let $x_\beta \in \Gamma_\beta$, and suppose that $x_\beta \to x_0 \in \mathcal{R} \subset \Gamma$ as $\beta \to +\infty$. Let $i_1$ and $i_2$ be the only two indices such that $u_{i_1}, u_{i_2} \not \equiv 0$ in a neighbourhood of $x_0$. 
Then, for every $\eps>0$ there exists $C_\eps>0$ such that
\[
\beta^{1/4+\eps}u_{i_1,\beta}(x_\beta) = \beta^{1/4+\eps}u_{i_2,\beta}(x_\beta)\ge C_\eps.
\]
\end{theorem}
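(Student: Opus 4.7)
Plan.

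I argue by contradiction. Suppose that for some $\varepsilon>0$ there exist sequences $\beta_n\to+\infty$ and $x_n\in\Gamma_{\beta_n}$ with $x_n\to x_0\in\mathcal{R}$ such that $m_n:=u_{i_1,\beta_n}(x_n)=u_{i_2,\beta_n}(x_n)$ satisfies $m_n\,\beta_n^{1/4+\varepsilon}\to 0$. By Theorem~\ref{corol: decay components vanishing}, the remaining components $u_{j,\beta_n}$ with $j\neq i_1,i_2$ decay exponentially in $\beta_n$ on a fixed ball $B_R(x_0)$, so it suffices to treat the effective two-component system for $(u_{i_1,\beta_n},u_{i_2,\beta_n})$.

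The heart of the argument is a blow-up at $x_0$ at the natural competition scale $\beta_n^{-1/4}$: set
\[
\tilde u_{i,n}(y):=\beta_n^{1/4}\,u_{i,\beta_n}\bigl(x_0+\beta_n^{-1/4}\,y\bigr),\qquad i\in\{i_1,i_2\},
\]
and write $i'$ for the other index in $\{i_1,i_2\}$. By Theorem~\ref{thm: global upper estimate} and the uniform Lipschitz bound of \cite{SoZi}, the family $\tilde u_{i,n}$ is locally bounded and equi-Lipschitz on $\R^N$; the rescaled system reads
\[
-\Delta\tilde u_{i,n}=O(\beta_n^{-3/4})-a_{i_1i_2}\,\tilde u_{i,n}\,\tilde u_{i',n}^{2},
\]
so Ascoli--Arzel\`a and elliptic regularity give, up to subsequence, convergence in $\C^{1}_\loc(\R^{N})$ to a non-negative entire solution $(V_{i_1},V_{i_2})$ of the limit system $\Delta V_{i}=a_{i_1i_2}\,V_{i}V_{i'}^{2}$.

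The contradiction is obtained by comparing two pieces of information at the points $y_n:=\beta_n^{1/4}(x_n-x_0)$. On the one hand, the non-degeneracy of $\mf u$ at the regular point $x_0$ (inherited from $\mf u\in\mathcal{G}(\Omega)$, cf.~\cite{tt}) yields positive linear growth $u_{i_1}(x)+u_{i_2}(x)\ge c\,|(x-x_0)\cdot\nu|$ for a unit vector $\nu$ and $c>0$; combined with a quantitative rate of convergence $\mf u_{\beta_n}\to\mf u$ faster than $\beta_n^{-1/4}$, this forces the asymptotics $V_{i_1}(y)\sim\alpha_1(y\cdot\nu)^+$ and $V_{i_2}(y)\sim\alpha_2(y\cdot\nu)^-$ as $|y|\to\infty$, for some $\alpha_1,\alpha_2>0$. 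Using the uniqueness of entire solutions with such one-dimensional linear asymptotics (\cite{BeTeWaWe} and its higher-dimensional reformulation), $(V_{i_1},V_{i_2})$ is the Berestycki--Terracini--Wang--Wei profile, and in particular $V_{i_1}=V_{i_2}$ equals a universal positive constant $c_\ast>0$ on its nodal interface. On the other hand, $\tilde u_{i_1,n}(y_n)=\tilde u_{i_2,n}(y_n)=\beta_n^{1/4}m_n\to 0$ by the standing assumption, so, if $y_n$ stays bounded, any limit point $y_\ast$ lies on the interface of $V$ while $V_{i_1}(y_\ast)=0<c_\ast$, a contradiction.

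The main obstacle is producing the quantitative $L^\infty$ rate $\|\mf u_{\beta_n}-\mf u\|_\infty=o(\beta_n^{-1/4+\eta})$ for every $\eta>0$ on a fixed neighbourhood of $x_0$, which is strictly stronger than the qualitative $\C^{0,\alpha}_\loc$ convergence available from the compactness results and is precisely what forces the $\varepsilon$-loss in the conclusion. I would achieve it by exploiting the almost-monotonicity of the Almgren frequency $N_\beta$ at $x_0$ in the form developed in \cite{SoZi}, whose monotonicity defect is polynomially small in $\beta_n^{-1}$. The case $|y_n|\to\infty$ (i.e., $x_n$ is too far from $x_0$ on the rescaled picture) is handled by re-centering the blow-up at $x_n$ instead, noting that the Almgren monotonicity applies with constants that are uniform in $n$ because $x_n\to x_0\in\mathcal{R}$.
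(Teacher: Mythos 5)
Your argument has a genuine gap, and it sits exactly where you flag ``the main obstacle''. The contradiction you aim for requires identifying the blow-up limit at the fixed scale $\beta_n^{-1/4}$ as a nondegenerate one-dimensional profile with slopes $\alpha_1,\alpha_2>0$ at infinity, and for this you invoke a quantitative rate $\|\mf{u}_{\beta_n}-\mf{u}\|_{L^\infty}=o(\beta_n^{-1/4+\eta})$ near $x_0$. No such rate is proved in the paper or in the references, and the almost-monotonicity defect of the Almgren quotient in \cite{SoZi} controls the perturbation coming from $f_{i,\beta}$, not the distance between $\mf{u}_\beta$ and its limit; it gives no convergence rate in $\beta$. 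Without that ingredient your normalization $\tilde u_{i,n}=\beta_n^{1/4}u_{i,\beta_n}(x_n+\beta_n^{-1/4}\cdot)$ can degenerate: under your contradiction hypothesis the rescaled pair may converge to $(0,0)$, or to a configuration with a single nontrivial (affine or constant) component, and then the value at the interface point carries no contradiction. This is precisely why the paper does not normalize by $\beta^{1/4}$ at a fixed scale, but by $H(\mf{u}_\beta,x_\beta,r_\beta)^{1/2}$ at the adaptive radius $r_\beta$ defined by $\beta H(\mf{u}_\beta,x_\beta,r_\beta)r_\beta^2=1$ (Lemma \ref{lem: choice of r}), which forces a nontrivial limit in Theorem \ref{thm: blow-up}. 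Note also that if your scheme worked, the positivity of the limit profile on its interface would yield the sharp bound $u_{i_1,\beta}(x_\beta)\ge c\beta^{-1/4}$, i.e.\ the exponent $1/4$ without $\eps$, which the authors explicitly state they cannot prove and leave as a conjecture; this is a strong indication that the missing rate is not a technical detail but the heart of the matter.

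For comparison, the paper's proof is much softer and avoids any classification at a fixed scale: it applies Theorem \ref{thm: lower general} with $D=N(\mf{u},x_0,0^+)=1$, whose proof only uses the monotonicity of the Almgren quotient to get $N(\mf{u}_\beta,x_\beta,r)\le D+\eps$ for all small $r$ (this is where the $\eps$-loss enters), integrates $\frac{d}{dr}\log H=2N/r$ to bound $H(\mf{u}_\beta,x_\beta,r)$ from below by $C_\eps r^{2(D+\eps)}$, and then combines the definition of $r_\beta$ with Lemma \ref{lem: H con m} to convert this into the pointwise lower bound on $\sum_i u_{i,\beta}(x_\beta)$; the reduction from the sum to the two dominating components $u_{i_1,\beta}=u_{i_2,\beta}$ is then supplied by the exponential smallness of the others in Theorem \ref{corol: decay components vanishing}, exactly as in your first reduction step. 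If you want to pursue your route, you would need to first prove the convergence-rate statement (or, equivalently, a uniform nondegeneracy of the $\beta^{-1/4}$-scale blow-up near regular points), which is an open problem rather than an available lemma.
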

We conjecture that the previous estimate holds replacing the exponent $1/4+\eps$ with $1/4$. Theorem \ref{thm: lowe estimate} is actually a corollary of a more general statement. We recall the definition of the Almgren quotient, equation \eqref{def: Almgren intro}.

\begin{theorem}\label{thm: lower general}
Under assumption \eqref{nontrivial}, let $x_\beta \in \Gamma_\beta$ such that $x_\beta \to x_0 \in \Gamma$. Let $N(\mf{u},x_0,0^+)=D$. Then for any $\eps>0$ there exists $C_\eps>0$ such that
\[
	\liminf_{\beta \to +\infty} \beta^{(D+\eps)/(2+2D)} \left(\sum_{i=1}^k u_{i,\beta}(x) \right)\ge C_\eps.
\]
\end{theorem}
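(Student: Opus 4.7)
I would argue by contradiction: suppose there exist $\epsilon>0$, $\beta_n \to +\infty$ and $x_n \in \Gamma_{\beta_n}$ with $x_n \to x_0 \in \Gamma$ and $N(\mf{u},x_0,0^+)=D$, such that $L_n := \sum_i u_{i,\beta_n}(x_n)$ satisfies $L_n\,\beta_n^{(D+\epsilon)/(2+2D)} \to 0$. The natural scale at which the Almgren-type growth of $\mf{u}$ and the competition interaction balance is $\rho_n := \beta_n^{-1/(2+2D)}$, so I would introduce the blow-up
\[
	\mf{W}_n(z) := \rho_n^{-D}\,\mf{u}_{\beta_n}(x_n+\rho_n z).
\]
A direct computation shows that $\mf{W}_n$ solves
\[
	-\Delta W_{i,n} = \rho_n^{2-D} f_{i,\beta_n}(x_n+\rho_n z,\rho_n^D W_{i,n}) - W_{i,n}\sum_{j\neq i} a_{ij} W_{j,n}^2,
\]
where by (F1) the forcing term vanishes as $n\to\infty$ on compact sets while the rescaled competition coupling is identically $\beta_n\rho_n^{2+2D}=1$.

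The crucial a priori estimate is a uniform Almgren-type growth bound $\sup_{B_r(x_n)}|\mf{u}_{\beta_n}|\le C\,r^D$ for $\rho_n\lesssim r\lesssim r_0$. This is obtained by combining the approximate monotonicity of the Almgren functional $N(\mf{u}_{\beta_n},x_n,\cdot)$ (once augmented by the interaction energy, as in \cite{NoTaTeVe,SoZi}) with the convergence $N(\mf{u}_{\beta_n},x_n,r)\to N(\mf{u},x_0,r)\to D$ in a controlled joint limit as $n\to+\infty$ and $r\to 0^+$. After rescaling, this reads $|\mf{W}_n(z)|\le C|z|^D$ on compact subsets of $\R^N$; interior elliptic estimates then give, along a subsequence, $\mf{W}_n\to \mf{W}^*$ in $\C^2_{\loc}(\R^N)$, where $\mf{W}^*$ is a non-negative entire solution of the competitive limit system
\[
	\Delta W_i^* = W_i^*\sum_{j \neq i} a_{ij}(W_j^*)^2
\]
with polynomial growth of degree at most $D$. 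A matching Almgren lower bound $\sup_{B_R}|\mf{W}_n|\ge c R^D$ for bounded $R$ ensures $\mf{W}^*\not\equiv\mf{0}$; moreover, the asymptotic profile of $\mf{W}^*$ at infinity is exactly the Almgren blow-up $\mf{W}$ of $\mf{u}$ at $x_0$, which is a non-trivial homogeneous-of-degree-$D$ solution of the segregated system.

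The contradiction is read off at the origin: by the standing assumption,
\[
	\sum_i W_i^*(0) = \lim_n \rho_n^{-D} L_n = \lim_n \beta_n^{D/(2+2D)} L_n = 0,
\]
so $\mf{W}^*(0)=\mf{0}$. But a non-trivial non-negative smooth entire solution of the competitive system whose profile at infinity is homogeneous of degree $D$ cannot vanish at an interior point: strong unique continuation forces each non-zero $W_i^*$ to vanish at $0$ to finite order, producing an Almgren frequency at the origin strictly larger than the frequency $D$ realized at infinity, which contradicts the monotonicity of $N(\mf{W}^*, 0, \cdot)$ for the limit system. The main obstacle, and the reason for the $+\epsilon$ slack in the statement, is the uniform-in-$n$ Almgren growth estimate down to the scale $\rho_n$: the approximate monotonicity of the $\beta$-Almgren functional holds only up to error terms involving $\beta \int u_{i,\beta}^2 u_{j,\beta}^2$ and $\int f_{i,\beta}u_{i,\beta}$, and absorbing these while passing to the joint limit $n\to\infty$, $r\to 0^+$ forces a small frequency slack $\epsilon/2$, which is then propagated through the rescaling into the exponent $(D+\epsilon)/(2+2D)$. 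Removing $\epsilon$ to obtain the conjectured sharp rate $\beta^{-D/(2+2D)}$ would require a significantly finer monotonicity formula tailored to the natural interface scale.
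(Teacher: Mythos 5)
Your blow-up at the deterministic scale $\rho_n=\beta_n^{-1/(2+2D)}$ with the homogeneous normalization $\rho_n^{-D}$ is a genuinely different route from the paper's, but as written it has a real gap at its core: the ``uniform Almgren-type growth bound'' $\sup_{B_r(x_n)}|\mf{u}_{\beta_n}|\le C r^{D}$ for $\rho_n\lesssim r\lesssim r_0$ is asserted, not proved, and it does not follow from the tools you invoke. An upper growth bound of this type is equivalent to a frequency \emph{lower} bound $N(\mf{u}_{\beta_n},x_n,r)\ge D-o(1)$ on the whole range $[\rho_n,r_0]$; but monotonicity of $N$ in $r$ only propagates \emph{upper} bounds from a fixed scale downwards (this is exactly how the paper gets $N(\mf{u}_\beta,x_\beta,r)\le D+\eps$ for all $r\le\bar r$), while $N(\mf{u}_\beta,x_\beta,0^+)=0$ for every fixed $\beta$ because $\mf{u}_\beta$ is smooth and strictly positive, so the lower bound necessarily fails below some $\beta$-dependent scale. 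Deciding whether that scale is $\ll\rho_n$ is essentially as hard as the sharp decay estimate itself (it is the content of the authors' conjecture), and the paper's proof of Theorem \ref{thm: decay higher multiplicity} introduces the thresholds $\rho_\beta,R_\beta$ and an ACF formula precisely because no such bound is available. Without it your $\mf{W}_n$ need not be locally bounded, so the $\mathcal{C}^2_{\loc}$ compactness and the existence of $\mf{W}^*$ are unjustified. A second, related degeneration: the nontriviality bound $\sup_{B_R}|\mf{W}_n|\ge cR^D$ requires $H(\mf{u}_{\beta_n},x_n,R\rho_n)\gtrsim (R\rho_n)^{2D}$, but the lower bound one can actually prove carries the slack, $H(r)\ge C_\eps r^{2(D+\eps)}$, and at $r=R\rho_n$ this degenerates like $\rho_n^{2\eps}\to 0$ after your normalization. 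Note also that your contradiction hypothesis is only used through $L_n=o(\rho_n^{D})$, so if the scheme closed it would prove the $\eps=0$ statement which the authors explicitly leave as a conjecture --- a strong sign that a step is missing. Finally, the endgame is both overbuilt and shaky: since \emph{all} components of $\mf{W}^*$ would vanish at $0$ and each nontrivial component satisfies $\Delta W_i^*=c(x)W_i^*$ with $c\ge0$, the strong maximum principle alone gives $\mf{W}^*\equiv\mf{0}$ (no unique continuation or frequency comparison needed), whereas your identification of the profile of $\mf{W}^*$ at infinity with the Almgren blow-up of $\mf{u}$ at $x_0$ is exactly the kind of matching the paper warns may fail (cf.\ the remark after the lemma giving $d\le D$, where $d<D$ is shown to be possible).

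For comparison, the paper's argument avoids all compactness at the scale $\rho_n$: it fixes $\bar r$ with $N(\mf{u},x_0,\bar r)\le D+2\eps$, uses convergence plus monotonicity to get $N(\mf{u}_\beta,x_\beta,r)\le D+\eps$ for $r\le\bar r$, integrates $\tfrac{d}{dr}\log H=2N/r$ to obtain $H(\mf{u}_\beta,x_\beta,r)\ge C_\eps r^{2(D+\eps)}$, and then evaluates this at the \emph{intrinsic} radius $r_\beta$ defined by $\beta H(\mf{u}_\beta,x_\beta,r_\beta)r_\beta^2=1$ (Lemma \ref{lem: choice of r}), at which scale the normalized blow-up of Theorem \ref{thm: blow-up} is automatically compact and nondegenerate, so that $H(\mf{u}_\beta,x_\beta,r_\beta)\simeq\bigl(\sum_i u_{i,\beta}(x_\beta)\bigr)^2$ (Lemma \ref{lem: H con m}); simple algebra then yields $\beta^{D+\eps}\bigl(\sum_i u_{i,\beta}(x_\beta)\bigr)^{2(1+D+\eps)}\ge C_\eps$. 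If you want to salvage your scheme, you would need to replace the deterministic scale $\rho_n$ by $r_\beta$ (or prove the frequency lower bound down to $\rho_n$, which is the open sharp-rate question).
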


%
%
%

The last asymptotic estimate we present regards the quantification of the improvement of the decay arround the singular part of the free-boundary, under additional assumptions. We suppose that \begin{equation}\tag{A}\label{a_ij=1}
\text{$a_{ij} = 1$ for every $i \neq j$}, 
\end{equation}
and introduce the following notion:

\begin{definition}\label{def singular interface}
We define the \emph{singular part of the interface $\Gamma_\beta$} as 
\[
\Sigma_\beta:=\Gamma_\beta \setminus \left\{ x \in \Gamma_\beta\left| \begin{array}{l}
\text{there exist exactly two indices $i_1 \neq i_2$ such that} \\
\text{$u_{i_1,\beta}(x) = u_{i_2,\beta}(x) > u_{j,\beta}(x)$ for all $j \neq i_1,i_2$}, \\
\text{and $\nabla(u_{i_1,\beta} -u_{i_2,\beta})(x) \neq 0$}
\end{array} \right. \right\}.
\]
\end{definition}

The definition is inspired by classical contributions regarding the singular set of solutions of elliptic equations, see for instance \cite{HanHardtLin} and the references therein. Actually, thanks to (F1) the main results in \cite{HanHardtLin} are applicable for any $\beta$ fixed, and hence the closed set $\Gamma_\beta$ can be decomposed in $(\Gamma_\beta \setminus \Sigma_\beta) \cup \Sigma_\beta$, where $\Gamma_\beta \setminus \Sigma_\beta$ is relatively open in $\Gamma_\beta$ and is the collection of $\mathcal{C}^{1,\alpha}$ hyper-surfaces, while $\Sigma_\beta$ is relatively closed and has Hausdorff dimension at most $N-2$. In other words, the same decomposition holding for $\Gamma$ holds also for $\Gamma_\beta$.

\begin{theorem}\label{cor: improved decay singular sequence}
Under assumptions \eqref{nontrivial} and \eqref{a_ij=1}, let $x_\beta \in \Sigma_\beta$ for every $\beta$, $x_\beta \to x_0$ as $\beta \to +\infty$. Then $x_0 \in \Sigma$, and for every $\eps>0$ there exists $C_\eps>0$ such that 
\[
\limsup_{\beta \to +\infty} \beta^{3/10-\eps} \left( \sum_{i=1}^k u_{i,\beta}(x_\beta)\right) \le C_\eps.
\]
\end{theorem}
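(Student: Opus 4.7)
The plan has three steps; the main obstacle will be the blow-up/monotonicity comparison of Step~3.

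\textbf{Step 1: $x_0 \in \Sigma$.} I would first locate the limit in the singular part of $\Gamma$. By Proposition~\ref{prop: interfaces are good approximation}, $x_0\in\Gamma$, so it suffices to exclude $x_0\in\mathcal{R}$. If $x_0 \in \mathcal{R}$, Theorem~\ref{corol: decay components vanishing} supplies a ball $B_R(x_0)$ in which $\Gamma_\beta = \{u_{i_1,\beta}=u_{i_2,\beta}\}$ and the remaining $u_{j,\beta}$ are exponentially small in $\beta$. I would then examine the difference $d_\beta := u_{i_1,\beta}-u_{i_2,\beta}$: subtracting the two reduced equations gives the linear PDE $-\Delta d_\beta = (f_{i_1,\beta}-f_{i_2,\beta})+\beta u_{i_1,\beta}u_{i_2,\beta}\,d_\beta + o_\beta(1)$, whose potential $\beta u_{i_1,\beta}u_{i_2,\beta}$ is controlled in $L^2$ by Theorem~\ref{thm: global upper estimate}. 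Elliptic regularity upgrades the known $\C^{0,\alpha}$ convergence $\mf u_\beta\to\mf u$ to $\C^{1,\alpha}_{\loc}$ convergence $d_\beta\to d:=u_{i_1}-u_{i_2}$. Since $N(\mf u,x_0,0^+)=1$ characterises regular points, $d$ solves a linear elliptic equation near $x_0$ with $d(x_0)=0$ and $\nabla d(x_0)\neq 0$, so that $\nabla d_\beta(x_\beta)\to\nabla d(x_0)\neq 0$, contradicting $x_\beta\in\Sigma_\beta$.

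\textbf{Step 2: Frequency lower bound.} Set $D:=N(\mf u,x_0,0^+)$. Under the symmetry \eqref{a_ij=1} the classification of homogeneous tangent maps of $\mathcal G(\Omega)$-functions (cf.\ \cite{tt,SoTaTeZi,DaWaZh}) yields the gap $D\ge 3/2$ at singular points, the extremal value being realised by the planar triple-junction configuration. Observe $D/(2D+2)=3/10$ precisely at $D=3/2$, in the same scaling form as Theorem~\ref{thm: lower general}.

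\textbf{Step 3: From frequency to $\beta$-decay.} I would argue by contradiction: assume that along a subsequence $L_\beta:=\sum_i u_{i,\beta}(x_\beta)\ge c\,\beta^{-3/10+\eps}$, and blow up at the competition scale $r_\beta:=(\beta L_\beta^2)^{-1/2}\to 0$ by setting
\[
\tilde u_{i,\beta}(y):=L_\beta^{-1}u_{i,\beta}(x_\beta+r_\beta y),\qquad \sum_i \tilde u_{i,\beta}(0)=1.
\]
The rescaled system reads $-\Delta\tilde u_{i,\beta}=o(1)\tilde u_{i,\beta}-\tilde u_{i,\beta}\sum_{j\ne i}\tilde u_{j,\beta}^2$ by (F1) and $r_\beta\to 0$, and the uniform Lipschitz bound of \cite{SoZi} yields $\tilde u_{i,\beta}(y)\le 1+C(r_\beta/L_\beta)|y|$. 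Extracting a limit I would obtain a non-trivial entire solution $\mf U$ of $\Delta U_i=U_i\sum_{j\ne i}U_j^2$. A two-scale Almgren comparison $N(\tilde{\mf u}_\beta,0,\rho)=N(\mf u_\beta,x_\beta,r_\beta\rho)$, combined with the approximate monotonicity of $N(\mf u_\beta,x_\beta,\cdot)$ and its limit $D$ at $x_0$, then identifies the growth exponent of $\mf U$ at infinity with $D$. The hypothesised lower bound on $L_\beta$ constrains $\mf U$ to grow more slowly than $|y|^{3/2}$, contradicting $D\ge 3/2$. Bootstrapping, starting from the $o(\beta^{-1/4})$ bound of Theorem~\ref{thm: decay higher multiplicity} and improving the exponent through a sequence of such blow-ups, drives $\eps$ down to an arbitrary positive number.

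\textbf{Main obstacle.} The hardest ingredient is Step~3: the $\beta$-Almgren quotient is only approximately monotone, with remainders depending on both $\beta$ and the scale, and the identification of the growth of $\mf U$ at infinity with the limit frequency $D$ requires a careful expanding/shrinking two-scale analysis together with a Liouville-type statement for entire solutions of the competitive limit system. Step~1 is also non-trivial in that it upgrades $\C^{0,\alpha}$ to $\C^{1,\alpha}$ convergence for the two dominant components across the regular interface; Step~2 reduces to a now standard classification of symmetric segregated tangent maps.
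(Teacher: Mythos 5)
There are two genuine gaps. First, your Step~1 uses exactly the mechanism that the paper's Proposition~\ref{prop: non C^1} rules out: the convergence $\nabla(u_{i_1,\beta}-u_{i_2,\beta})(x_\beta)\to\nabla(u_{i_1}-u_{i_2})(x_0)$ along interface points is false in general, and $\{u_{i_1,\beta}-u_{i_2,\beta}\}$ need not be bounded in $\mathcal{C}^{1,\alpha}$. The elliptic upgrade you invoke also fails quantitatively: Theorem~\ref{thm: global upper estimate} gives $\beta u_{i_1,\beta}^2u_{i_2,\beta}^2\le C$, hence only $\beta u_{i_1,\beta}u_{i_2,\beta}\le C\beta^{1/2}$, so the potential in the equation for $u_{i_1,\beta}-u_{i_2,\beta}$ is not uniformly controlled and no uniform $\mathcal{C}^{1,\alpha}$ bound follows. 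Second, and more seriously, in Steps~2--3 you try to extract the exponent from $D=N(\mf{u},x_0,0^+)$ by ``identifying the growth exponent of $\mf{U}$ at infinity with $D$''. The two-scale Almgren comparison only yields the one-sided inequality $d:=N(\mf{U},0,+\infty)\le D$ (this is the paper's lemma preceding Corollary~\ref{thm: classification limits regular part}, and the paper explicitly remarks that $d<D$ can occur by translating the solutions), so showing that your hypothesised lower bound on $L_\beta$ forces $\mf{U}$ to grow more slowly than $|y|^{3/2}$ does not contradict $D\ge 3/2$: it is perfectly compatible with $d<3/2\le D$. As stated, your contradiction argument does not close.

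The missing idea is to use the hypothesis $x_\beta\in\Sigma_\beta$ \emph{at the blow-up scale}, not merely through its consequence $x_0\in\Sigma$. Since $\mf{v}_\beta\to\mf{V}$ in $\mathcal{C}^2_{\loc}(\R^N)$ (Theorem~\ref{thm: blow-up}), the defining conditions of $\Sigma_\beta$ (either three coinciding maximal components, or two coinciding ones with vanishing gradient of their difference) pass to the limit and give $0\in\Sigma_{\mf{V}}$, so $\mf{V}$ cannot be $1$-dimensional; then the dichotomy for entire solutions under assumption \eqref{a_ij=1} (\cite[Theorem 1.3 and Corollary 1.9]{SoTe}) forces $d\ge 3/2$ — this is the only place where \eqref{a_ij=1} is genuinely needed, and your proposal never invokes it there. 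This simultaneously yields $x_0\in\Sigma$ (since $3/2\le d\le D$), making your Step~1 unnecessary, and supplies the correct lower bound on the frequency governing the decay. The upper estimate then follows from a monotonicity argument on $r\mapsto H(\mf{u}_\beta,x_\beta,r)/r^{2N(\mf{u}_\beta,x_\beta,r)}$ combined with Lemmas~\ref{lem: choice of r} and \ref{lem: H con m}, giving $\limsup_\beta\beta^{(d-\eps)/(2+2d)}\sum_i u_{i,\beta}(x_\beta)\le C_\eps$ (the paper's Proposition~\ref{prop 151}); since $t\mapsto (t-\eps)/(2+2t)$ is increasing and $d\ge 3/2$, the exponent $3/10-\eps$ follows. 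No bootstrap from Theorem~\ref{thm: decay higher multiplicity} is needed.
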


Condition $x_\beta \in \Sigma_\beta$ means that we can reach the singular part of the free-boundary through a sequence of points on the singular part of $\Gamma_\beta$ (in general the existence is not guaranteed).

\begin{remark}
In the previous discussion we believe that assumption \eqref{a_ij=1} is not really necessary, and that the last result hold also for general symmetric matrices $(a_{ij})_{i,j}$. Nevertheless, in the proof of Theorem \ref{cor: improved decay singular sequence} we shall make use of several intermediate results proved in \cite{BeTeWaWe, SoTe}, where a system with $a_{ij}=1$ is considered. For this reason, we prefer to assume \eqref{a_ij=1}. 
\end{remark}

In what follows we briefly describe the strategy of the proofs of the previous results. While Theorem \ref{thm: global upper estimate} rests essentially only on the Lipschitz boundedness of $\{\mf{u}_\beta\}$ in compacts of $\Omega$, the other decay estimates are much more involved and require several intermediate propositions of independent interest.

\subsection{Main results: normalization and blow-up}

The following is a crucial intermediate step in the proofs of Theorems \ref{thm: decay higher multiplicity}-\ref{cor: improved decay singular sequence}:
\begin{theorem}\label{thm: blow-up}
Under assumption \eqref{nontrivial}, let $x_\beta \in \Gamma_\beta$, and suppose that $x_\beta \to x_0 \in \Gamma$ as $\beta \to +\infty$. There exists a sequence of radii $r_\beta>0$, $r_\beta \to 0$ as $\beta \to +\infty$, such that the scaled sequence
\[
\mf{v}_\beta(x):=\frac{\mf{u}_{\beta}(x_\beta + r_\beta x)}{H(\mf{u}_\beta,x_\beta,r_\beta)^{1/2}}, \quad \text{where} \quad H(\mf{u}_\beta,x_\beta,r_\beta) := \frac{1}{r_\beta^{N-1}} \int_{\pa B_{r_\beta}} \sum_{i=1}^k u_{i,\beta}^2,
\]   
is convergent in $\mathcal{C}^2_{\loc}(\R^N)$ to a limit $\mf{V}$, solution to
\begin{equation}\label{entire system a_{ij}}
\begin{cases}
\Delta V_i = \sum_{i \neq j} a_{ij} V_i V_j^2  \\
V_i \ge 0
\end{cases} \quad \text{in $\R^N$}.
\end{equation}
The profile $\mf{V}$ has at least two non-trivial components and at most polynomial growth, in the sense that 
\[
V_1(x) + \dots+ V_k(x) \le C(1+|x|^d) \qquad \forall x \in \R^N
\]
for some $C,d \ge 1$.
\end{theorem}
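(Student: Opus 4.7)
The plan is to choose the blow-up scale by the balancing condition $\beta r_\beta^2 H(\mf{u}_\beta, x_\beta, r_\beta) = 1$, which is precisely what makes the rescaled competition term carry unit coefficient. The map $r \mapsto \beta r^2 H(\mf{u}_\beta, x_\beta, r)$ is continuous, equals $0$ at $r = 0$, and for every fixed $r > 0$ diverges to $+\infty$ as $\beta \to \infty$ (since $\mf{u}_\beta \to \mf{u}$ in $H^1_{\loc}$ and \eqref{nontrivial} forces $H(\mf{u}, x_0, r) > 0$ for small $r > 0$ by unique continuation); the intermediate value theorem then produces $r_\beta$, and forces $r_\beta \to 0$. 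Unwinding the definition, $H(\mf{v}_\beta, 0, 1) = 1$ and on $B_{R_0/r_\beta}(0)$ the rescaled sequence satisfies
\[
-\Delta v_{i,\beta} = \tilde f_{i,\beta}(x) - v_{i,\beta} \sum_{j\ne i} a_{ij} v_{j,\beta}^2, \qquad |\tilde f_{i,\beta}(x)| \le C r_\beta^2 v_{i,\beta}(x),
\]
by (F1), so $\tilde f_{i,\beta} \to 0$ uniformly on compacts.

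The main obstacle is establishing a uniform upper bound on the Almgren frequency $N(\mf{u}_\beta, x_\beta, r) \le D$ for all $r$ in some fixed interval $(0, R_0)$ and all $\beta$ large. I would derive this from the perturbed monotonicity of a quantity of the form $r \mapsto e^{Cr}N(\mf{u}_\beta, x_\beta, r) + Cr$ available for the $\beta$-problem (cf. \cite{NoTaTeVe, SoZi}), evaluated at a fixed positive radius, combined with the finiteness of $N(\mf{u}, x_0, 0^+)$ coming from the regularity theory of $\mf{u}$ in \cite{tt}. After rescaling, $N(\mf{v}_\beta, 0, R) \le D$ for $R \le R_0/r_\beta \to +\infty$, and the associated doubling inequality together with $H(\mf{v}_\beta, 0, 1) = 1$ yields $H(\mf{v}_\beta, 0, R) \le C R^{2D}$; subharmonicity (up to the vanishing perturbation $\tilde f_{i,\beta}$) then upgrades this to the pointwise bound $\sup_{B_R} \sum v_{i,\beta} \le C(1+R)^D$. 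Standard Schauder estimates applied to the rescaled equation produce uniform $C^{2,\alpha}_{\loc}(\R^N)$ bounds.

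By Arzel\`a--Ascoli, along a subsequence $\mf{v}_\beta \to \mf{V}$ in $\mathcal{C}^2_{\loc}(\R^N)$; passing to the limit in the equation (using $\tilde f_{i,\beta} \to 0$) yields $\Delta V_i = V_i \sum_{j \ne i} a_{ij} V_j^2$ on $\R^N$, with $V_i \ge 0$ and polynomial growth $\sum_i V_i(x) \le C(1+|x|)^D$. The normalization passes to the limit as $\int_{\pa B_1} \sum V_i^2 = 1$, so $\mf{V} \not\equiv \mf{0}$.

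It remains to show that at least two components are non-trivial. Suppose by contradiction that $\mf{V}$ has only one non-trivial component $V_{i_0}$. Then the system reduces to $\Delta V_{i_0} = 0$, so $V_{i_0}$ is a non-negative harmonic function on $\R^N$. Because $x_\beta \in \Gamma_\beta$, after extracting a subsequence we may fix two distinct indices $i_1 \ne i_2$ with $v_{i_1,\beta}(0) = v_{i_2,\beta}(0) \ge v_{l,\beta}(0)$ for every $l$, whence in the limit $V_{i_1}(0) = V_{i_2}(0) \ge V_l(0)$. Since at most one component is non-trivial, one of $V_{i_1}, V_{i_2}$ is identically zero, forcing $V_l(0) = 0$ for every $l$ and in particular $V_{i_0}(0) = 0$. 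But a non-negative harmonic function on $\R^N$ attaining its minimum at a point is identically constant by the strong minimum principle, so $V_{i_0} \equiv 0$, contradicting $\int_{\pa B_1} \sum V_i^2 = 1$.
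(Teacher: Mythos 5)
Your proposal is correct and follows essentially the same route as the paper: the same balancing choice $\beta r_\beta^2 H(\mf{u}_\beta,x_\beta,r_\beta)=1$ (Lemma \ref{lem: choice of r}), a uniform Almgren frequency bound at a fixed radius transferred from the limit profile and propagated to small radii by (almost-)monotonicity (Lemma \ref{lem: N bounded}), then the doubling estimate $H(\mf{v}_\beta,0,R)\le CR^{2D}$, local $L^\infty$ bounds and elliptic estimates giving $\mathcal{C}^2_{\loc}$ convergence and polynomial growth. Your final step for the two non-trivial components is only a cosmetic variant of the paper's: the paper applies the strong maximum principle to each $V_j$ with $\Delta V_j=c_j(x)V_j$, $c_j\ge 0$, to conclude directly that $V_{i_1}(0)=V_{i_2}(0)>0$, whereas you reduce to a single harmonic component and use the strong minimum principle; both hinge on the normalization $\int_{\pa B_1}\sum_i V_i^2=1$.
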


Hence, for any dimension $N \ge 1$, the geometry of the solutions with polynomial growth of \eqref{entire system a_{ij}} is responsible for the geometry of $\mf{u}_\beta$ near the interface $\Gamma_\beta$, at least for $\beta$ sufficiently large (cf. \cite[Theorem 1.2]{BeLiWeZh}).

In this perspective, we can completely characterize the solution $\mf{V}$, and hence the geometry of $\{\mf{u}_\beta\}$, around the regular part of the free boundary.

\begin{corollary}\label{thm: classification limits regular part}
Under the assumptions of Theorem \ref{thm: blow-up}, let $x_0 \in \mathcal{R}$. Then $\mf{V}$ has only two non-trivial components, say $V_1$ and $V_2$; $(V_1,V_2)$ has linear growth, and is the unique $1$-dimensional solution of
\begin{equation}\label{system 2}
\begin{cases}
\Delta V_1 = a_{12} V_1 V_2^2 \\ 
\Delta V_2 = a_{12} V_1^2 V_2\\
V_1,V_2>0 
\end{cases} \quad \text{in $\R^N$.}
\end{equation}
\end{corollary}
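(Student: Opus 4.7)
The plan has three steps, each leveraging a result already established in the paper. First, I would reduce to a two-component limit by invoking Theorem \ref{corol: decay components vanishing}: near a regular point $x_0 \in \mathcal{R}$, the components $u_{j,\beta}$ with $j \neq i_1, i_2$ decay exponentially, $\sup_{B_R(x_0)} u_{j,\beta} \leq C_1 e^{-C_1 \beta^{C_2}}$, while the normalization $H(\mf{u}_\beta, x_\beta, r_\beta)^{1/2}$ is at worst polynomially small in $\beta$ (this follows from Theorem \ref{thm: global upper estimate} together with the explicit choice of $r_\beta$ made inside the proof of Theorem \ref{thm: blow-up}). Dividing an exponentially small quantity by a polynomially small one yields something that still vanishes as $\beta \to \infty$, so $v_{j,\beta} \to 0$ locally uniformly for every $j \neq i_1, i_2$. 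Hence $\mf{V}$ has at most two non-trivial components; combined with the lower bound in Theorem \ref{thm: blow-up} it has exactly two, and after relabelling $(V_1, V_2)$ solves \eqref{system 2}.

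Second, I would establish linear growth via the Almgren frequency. For entire positive solutions of \eqref{system 2}, the analogue of the quotient \eqref{def: Almgren intro} with $f_i \equiv 0$ and the competition term added to the energy is non-decreasing in $r$. Under the blow-up, the rescaled frequency $N(\mf{v}_\beta, 0, r)$ corresponds, up to contributions of $f_{i,\beta}$ that vanish in the limit, to $N(\mf{u}_\beta, x_\beta, r \, r_\beta)$ for the original problem. Passing to $\beta \to \infty$ (using that $r_\beta \to 0$) forces $N(\mf{V}, 0, r) \to N(\mf{u}, x_0, 0^+) = 1$ as $r \to 0^+$, since $x_0 \in \mathcal{R}$. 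Monotonicity then yields $N(\mf{V}, 0, r) \geq 1$ for all $r > 0$, while the polynomial growth bound in Theorem \ref{thm: blow-up} keeps $N(\mf{V}, 0, r)$ bounded as $r \to \infty$. Combined with the rigidity principle that $N(\mf{V}, 0, \cdot) \equiv d$ forces $\mf{V}$ to be homogeneous of degree $d$ (and that otherwise the growth exponent strictly exceeds any value attained by $N$), this pins $N(\mf{V}, 0, \cdot) \equiv 1$, so $\mf{V}$ has linear growth.

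Third, I would invoke the classification result \cite[Theorem 1.1]{BeTeWaWe}: the only entire positive solutions of \eqref{system 2} with linear growth are one-dimensional, unique up to scaling, translation, and exchange of the two components. This matches the rescaled profile $(V_1, V_2)$ produced in the first two steps and closes the argument.

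The main technical obstacle is the second step, namely transferring the frequency information from $\mf{u}_\beta$ to $\mf{V}$ through the blow-up. One must carefully track how the interaction term $\beta u_{i,\beta} \sum_{j \neq i} a_{ij} u_{j,\beta}^2$ and the lower-order term $f_{i,\beta}(x, u_{i,\beta}) u_{i,\beta}$ rescale, control their error contributions as $\beta \to \infty$, and exploit the well-posedness of $N(\mf{u}, x_0, 0^+) = 1$ at regular points guaranteed by \cite{tt, NoTaTeVe}. Once linear growth is secured, the application of \cite{BeTeWaWe} is essentially immediate.
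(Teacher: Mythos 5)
There are two genuine gaps. First, your Step 1 is circular within the paper's logical structure: Theorem \ref{corol: decay components vanishing} is proved only later (Section \ref{sec: Reif}), via the uniform Reifenberg flatness and Propositions \ref{prp: local sep} and \ref{clean up regular}, and the very first ingredient of that chain (Lemma \ref{lem: Reif small}) invokes the present Corollary \ref{thm: classification limits regular part} to identify the blow-up limit as the $1$-dimensional profile. So you may not use the exponential decay of the non-surviving components here. It is also unnecessary: the paper's route does not separate off the extra components beforehand, because \cite[Theorem 1.3 and Corollary 1.12]{SoTe} (and \cite{Wa,Wa2} for $k=2$) classify \emph{all} non-constant solutions of \eqref{entire system a_{ij}} with $N(\mf{V},0,+\infty)\le 1$: they automatically have exactly two non-trivial components and are the unique $1$-dimensional profile. (Relatedly, your citation of \cite[Theorem 1.1]{BeTeWaWe} is a misattribution: that is the uniqueness result for the ODE system in $\R$; the $1$-dimensional symmetry of linear-growth solutions in $\R^N$ is Wang's theorem.)

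Second, your Step 2 runs the frequency argument in the wrong direction. What the blow-up transfer gives is, for every $r,\rho>0$ and $\beta$ large, $N(\mf{V},0,r)=\lim_\beta N(\mf{v}_\beta,0,r)=\lim_\beta N(\mf{u}_\beta,x_\beta,r_\beta r)\le \lim_\beta N(\mf{u}_\beta,x_\beta,\rho)=N(\mf{u},x_0,\rho)$, i.e.\ the \emph{upper} bound $d:=N(\mf{V},0,+\infty)\le N(\mf{u},x_0,0^+)=D=1$ (this is precisely the paper's lemma $d\le D$, and the paper even remarks that $d=D$ cannot be expected in general). Your claims that $N(\mf{V},0,0^+)=1$, that monotonicity then forces $N(\mf{V},0,r)\ge 1$ for all $r$, and that rigidity pins $N(\mf{V},0,\cdot)\equiv 1$ are all false: $\mf{V}$ is a smooth strictly positive solution, so $N(\mf{V},0,0^+)=0$, and by Remark \ref{rem: homogeneity non-segregated} system \eqref{entire system a_{ij}} has no non-constant homogeneous solutions, so a constant frequency is impossible. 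Moreover the polynomial growth bound of Theorem \ref{thm: blow-up} only bounds $N(\mf{V},0,+\infty)$ by the polynomial degree, not by $1$. The correct and sufficient conclusion is the inequality $N(\mf{V},0,+\infty)\le 1$, which is equivalent to linear growth and feeds directly into the classification results above; that two-line argument is the paper's entire proof.
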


Here and in what follows we write that a function is $1$-dimensional if, up to a rotation, it depends only on one variable. We postpone a detailed review of the known results about \eqref{entire system a_{ij}} to Section \ref{sec: prel}. For the moment, we anticipate that solutions of \eqref{entire system a_{ij}} having linear growth are classified: up to rigid motions and suitable scaling, there exists a unique $1$-dimensional solution \cite{SoTe,Wa,Wa2}. Therefore, the theorem establishes that, along sequences of points converging to the regular part of $\Gamma$, suitable scaling of the original solutions approaches a uniquely determined archetype profile in $\mathcal{C}^2$-sense. 

If $x_0 \in\Sigma$, the singular part of the free-boundary, then the picture is more involved and a complete classification of the admissible limits solving \eqref{entire system a_{ij}} seems out of reach. Indeed, in such case the emerging profile $\mf{V}$ has not linear growth, and \eqref{entire system a_{ij}} has infinitely many distinct solutions superlinear solutions \cite{BeTeWaWe,SoZi2,SoZi1}.
In any case, under additional assumptions we can still say something on the emerging limit profile. Recall that $\Sigma_\beta$ has been defined in Definition \ref{def singular interface}.


%
%

\begin{corollary}\label{thm: non-simple blow-up}
Under assumptions \eqref{nontrivial} and \eqref{a_ij=1}, let $x_\beta \in \Sigma_\beta$ for every $\beta$. Then $x_0 \in \Sigma$, and the limit profile $\mf{V}$ obtained in Theorem \ref{thm: blow-up} is not $1$-dimensional.
\end{corollary}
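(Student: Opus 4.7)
The plan is to argue by contradiction in two separate stages: first to establish $x_0 \in \Sigma$, and then to rule out $\mf{V}$ being one-dimensional. Both stages rest on passing the singular condition $x_\beta \in \Sigma_\beta$ to the blow-up limit $\mf{V}$ at the origin, and combining it with structural information about entire polynomial-growth solutions of \eqref{entire system a_{ij}}.

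\textbf{Step 1 ($x_0 \in \Sigma$).} Suppose by contradiction $x_0 \in \mathcal{R}$, and let $i_1, i_2$ be the two indices active near $x_0$. Fixing $R > 0$ as in Theorem \ref{corol: decay components vanishing}, for $\beta$ large one has $x_\beta \in B_R(x_0)$, $\Gamma_\beta \cap B_R(x_0) = \{u_{i_1,\beta} = u_{i_2,\beta}\} \cap B_R(x_0)$, and every other $u_{j,\beta}$ exponentially small in $\beta$. Combined with the polynomial lower bound $u_{i_l,\beta}(x_\beta) \gtrsim \beta^{-1/4 - \varepsilon}$ from Theorem \ref{thm: lowe estimate}, this excludes the ``three-or-more components meeting'' branch of Definition \ref{def singular interface} at $x_\beta$ for $\beta$ large, and therefore forces $\nabla(u_{i_1,\beta} - u_{i_2,\beta})(x_\beta) = 0$. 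By Corollary \ref{thm: classification limits regular part}, the blow-up $\mf{V}$ is the unique one-dimensional, linear-growth, two-component solution of \eqref{system 2}; an elementary ODE argument (via the reflection symmetry $W_{i_1}(t) = W_{i_2}(-t)$, together with the non-existence of entire positive solutions of $W'' = W^3$) gives $\nabla(V_{i_1} - V_{i_2})(0) \neq 0$. On the other hand, the $\mathcal{C}^2_{\loc}$-convergence together with the scaling yields
\[
\nabla(V_{i_1} - V_{i_2})(0) = \lim_{\beta \to +\infty} \frac{r_\beta}{H(\mf{u}_\beta, x_\beta, r_\beta)^{1/2}} \nabla(u_{i_1,\beta} - u_{i_2,\beta})(x_\beta) = 0,
\]
a contradiction, so $x_0 \in \Sigma$.

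\textbf{Step 2 ($\mf{V}$ is not one-dimensional).} Assume for contradiction $V_i(x) = W_i(x \cdot e)$. Every non-trivial $W_i$ is strictly positive by the strong maximum principle, and strictly convex since $W_i'' = W_i \sum_{j \neq i} a_{ij} W_j^2 > 0$, hence $W_i(t) \to +\infty$ as $|t| \to \infty$. The key reduction is that $\mf{V}$ must have \emph{exactly} two non-trivial components: if a third $W_{i_3}$ existed, the equation $W_{i_3}'' = W_{i_3}(a_{i_3 i_1}W_{i_1}^2 + a_{i_3 i_2}W_{i_2}^2 + \cdots)$ would have coefficient growing polynomially in $|t|$, and a Hermite / WKB-type analysis forces its positive solutions to grow or decay super-polynomially --- contradicting the polynomial-growth bound of $\mf{V}$ from Theorem \ref{thm: blow-up}. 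With exactly two non-trivial components $V_{i_1}, V_{i_2}$, one then passes the condition $x_\beta \in \Sigma_\beta$ to the limit at $0$: either (a) three or more components of $\mf{v}_\beta$ achieve the maximum value at $0$, producing in the limit a third index with $V_{i_3}(0) = V_{i_1}(0) > 0$ and hence non-trivial, contradicting the previous reduction; or (b) exactly two components tie at $0$ with vanishing gradient difference, giving $\nabla(V_{i_1} - V_{i_2})(0) = 0$ in the limit. But the classification \cite{BeTeWaWe} forces $(W_{i_1}, W_{i_2})$ to be the unique linear-growth solution up to scaling, translation, and exchange, which satisfies $W_{i_1}'(0) \neq W_{i_2}'(0)$ at its unique meeting point, contradicting (b) as well. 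The main obstacle is the Hermite-type asymptotic argument ruling out three or more non-trivial one-dimensional components: handling several competing polynomial growth rates rigorously requires genuine care beyond the classification results already quoted in the paper.
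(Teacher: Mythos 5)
Your Step 1 is circular within the logical architecture of this paper. You invoke Theorem \ref{corol: decay components vanishing} (and Theorem \ref{thm: lowe estimate}, whose proof relies on it), but both are established only in Section \ref{sec: Reif} \emph{by means of} the present corollary: the proof of Theorem \ref{corol: decay components vanishing} begins by excluding $x_\beta \in \Sigma_\beta$ precisely ``otherwise we would have a contradiction with Corollary \ref{thm: non-simple blow-up}'', and Proposition \ref{clean up regular} uses the implication $\Sigma_\beta \ni y_\beta \to y \Rightarrow y \in \Sigma$, which is again the statement being proved here (through Theorem \ref{cor: improved decay singular sequence}). So neither result is available to you. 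The detour is also unnecessary: if $x_0 \in \mathcal{R}$, Corollary \ref{thm: classification limits regular part} (which is legitimately available) already says that $\mf{V}$ has \emph{only} two nontrivial components and is the transversal $1$-dimensional profile, and passing the two branches of Definition \ref{def singular interface} to the $\mathcal{C}^2_{\loc}$ limit (three components tying at $0$ with a positive common value, or two tying with $\nabla(V_{i_1}-V_{i_2})(0)=0$) contradicts either the two-component structure or the transversality. The paper instead proves ``$\mf{V}$ not $1$-dimensional'' first, and then deduces $x_0 \in \Sigma$ from $d = N(\mf{V},0,+\infty) \ge 3/2$ (via \cite[Theorem 1.3 and Corollary 1.9]{SoTe}, which is where assumption \eqref{a_ij=1} enters) combined with the inequality $d \le D = N(\mf{u},x_0,0^+)$ established just before Corollary \ref{thm: classification limits regular part}; either route avoids the circularity, but your Step 1 as written does not.

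In Step 2 the key reduction --- that a $1$-dimensional $\mf{V}$ has exactly two nontrivial components --- is left as an unproved ``Hermite/WKB'' claim, and as stated it does not work: super-polynomial \emph{decay} of a third positive component is perfectly compatible with the polynomial growth bound of Theorem \ref{thm: blow-up}, so the asserted contradiction does not follow, and you yourself flag this as the main open obstacle. The paper closes exactly this point by citing \cite[Theorem 1.3 and Corollary 1.9]{SoTe} under \eqref{a_ij=1}: a non-constant $1$-dimensional solution must have $N(\mf{V},0,+\infty)=1$, linear growth and exactly two nontrivial components, hence it is the unique transversal profile, so $\Sigma_{\mf{V}}=\emptyset$, contradicting $0 \in \Sigma_{\mf{V}}$. (An elementary substitute does exist --- every nontrivial component is positive and convex, at most one component can diverge in each direction since $W'' \ge \lambda^2 W$ with $\lambda$ arbitrarily large forces exponential growth, and a constant positive component annihilates all the others --- but it must be written out; the growth-or-decay dichotomy you state does not deliver it.) Apart from these two points, the core of your Step 2 --- passing the two branches of the definition of $\Sigma_\beta$ to the blow-up limit at $0$ and contradicting the uniqueness and transversality of the $1$-dimensional profile --- is exactly the paper's argument.
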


The relation between Theorem \ref{thm: blow-up} and the proofs of Theorems \ref{thm: decay higher multiplicity}-\ref{cor: improved decay singular sequence} can be summarized by the following simple idea:
\begin{itemize}
\item firstly, we can deduce properties of the emerging limit $\mf{V}$, imposing different assumptions on $x_\beta$;
\item secondly, we can use the properties of $\mf{V}$ in order to prove the desired decay estimates.
\end{itemize}
For instance Corollary \ref{thm: non-simple blow-up} will be the base point in the derivation of Theorem \ref{cor: improved decay singular sequence}.


%
%

\subsection{Main results: uniform regularity of the interfaces and its consequences}

We now present our analysis concerning uniform regularity properties for the interfaces $\Gamma_\beta$ away from its singular set $\Sigma_\beta$.
Notice that, by definition and by the regularity of $\mf{u}_\beta$ for $\beta$ fixed, the sets $\Gamma_{\beta}$ are closed subsets. Moreover, $\Sigma_\beta$ is a relatively closed subset of $\Gamma_\beta$.
It is now the time to introduce a convenient notion of ``regular part" of $\Gamma_\beta$.

\begin{definition}\label{def: regular interface}
For $\rho > 0$ fixed, we let
\[
	\mathcal{R}_{\beta}(\rho) = \left\{ x \in \Gamma_{\beta} : N_\beta(\mf{u}_\beta, x, \rho) < 1+ \frac14 \right\}.
\]
\end{definition}

\begin{figure}[h]
	\centering
	\begin{overpic}[width=0.6\textwidth]{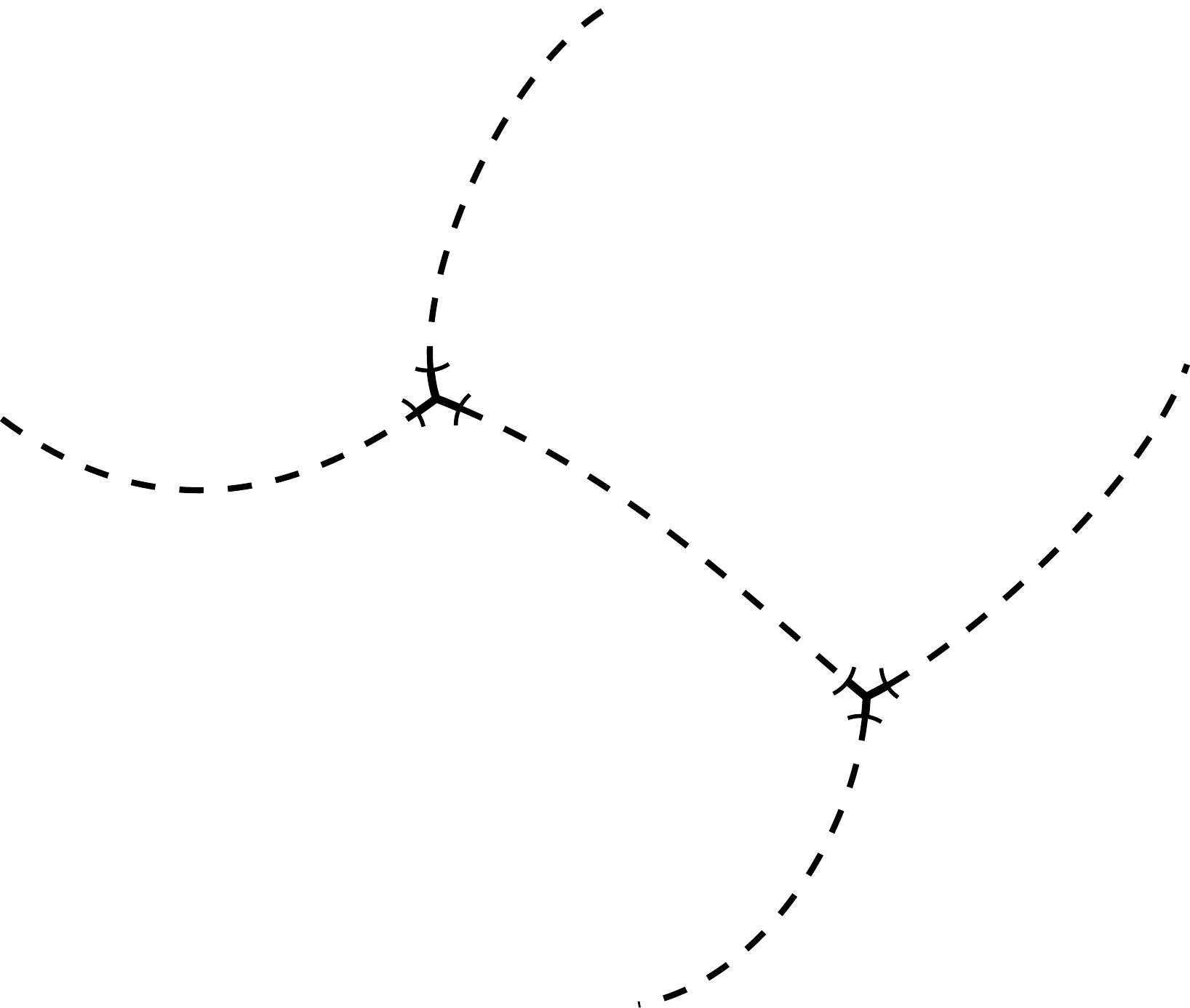}
		\put(30,20) {$u_{i,\beta} > u_{j,\beta}$}
		\put(60,40) {$u_{i,\beta}=u_{j,\beta}$}
		\put(76,21) {$\Sigma_\beta$}
		\put(40,37) {$\mathcal{R}_\beta(\rho)$}
	\end{overpic}
	\caption{A sketch of the interface $\Gamma_\beta$ for $\beta$ fixed: the dashed line represents the regular part of the free boundary $\mathcal{R}_\beta(\rho)$, while the corner points belong to the singular part $\Sigma_\beta$. As it will be proved, the singular part $\Sigma_\beta$ is detached from $\mathcal{R}_\beta(\rho)$.}
\end{figure}

As we shall see in Lemma \ref{lem: basic prop regular part}, by taking the parameter $\rho$ sufficiently small, the sets $\mathcal{R}_{\beta}(\rho)$ is a subset of $\Gamma_\beta \setminus \Sigma_\beta$ and
is detached, uniformly in $\beta$, from the singular part $\Sigma$ of the limit free-boundary $\Gamma=\{\mf{u}=\mf{0}\}$ (and thus is also uniformly detached from $\Sigma_\beta$). Our main result states that for any fixed $\rho > 0$, the sets $\mathcal{R}_{\beta}(\rho)$ enjoy a \emph{uniform vanishing Reifenberg flatness condition}. Specifically, we have:

\begin{theorem}\label{thm: reifenberg flat uniform}
Let $K \Subset \Omega$ be a compact set, let $\rho>0$, and let us assume that \eqref{nontrivial} holds. For any $\delta > 0$ there exists $R > 0$ such that for any $x_\beta \in \mathcal{R}_{\beta}(\rho) \cap K$ and $0 < r < R$ there exists a hyper-plane $H_{x_\beta,r} \subset \R^N$ containing $x_\beta$ such that
\[
	\dist_{\mathcal{H}}(\mathcal{R}_{\beta}(\rho) \cap B_r(x_\beta), H_{x_\beta,r} \cap B_r(x_\beta)) \leq \delta r.
\]
\end{theorem}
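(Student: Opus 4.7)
The plan is to argue by contradiction via blow-up, combining Theorem \ref{thm: blow-up}, the frequency bound of Definition \ref{def: regular interface}, and the rigidity of linear-growth entire solutions of \eqref{entire system a_{ij}}.

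\textbf{Setup.} Suppose the conclusion fails: there exist $\delta_0>0$, $\beta_n\to+\infty$, $x_n\in\mathcal{R}_{\beta_n}(\rho)\cap K$, and $r_n\to 0^+$ such that, for every hyperplane $H$ through $x_n$,
\[
\dist_{\mathcal{H}}\bigl(\mathcal{R}_{\beta_n}(\rho)\cap B_{r_n}(x_n),\, H\cap B_{r_n}(x_n)\bigr) > \delta_0 r_n.
\]
Up to subsequences, $x_n\to x_0\in K\cap\Omega$, and Proposition \ref{prop: interfaces are good approximation} gives $x_0\in\Gamma$. Passing the hypothesis $N_{\beta_n}(\mf{u}_{\beta_n},x_n,\rho)<1+\tfrac14$ to the limit (via the convergence $\mf{u}_\beta\to\mf{u}$ in $\C^{0,\alpha}_\loc\cap H^1_\loc$), Almgren monotonicity yields $N(\mf{u},x_0,0^+)\le 1+\tfrac14<2$, so that $x_0\in\mathcal{R}$ has multiplicity two.

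\textbf{Blow-up to a one-dimensional profile.} Apply Theorem \ref{thm: blow-up} at scale $r_n$: the rescalings
\[
\mf{v}_n(x):=\frac{\mf{u}_{\beta_n}(x_n+r_n x)}{H(\mf{u}_{\beta_n},x_n,r_n)^{1/2}}
\]
converge in $\C^2_\loc(\R^N)$ to a nontrivial solution $\mf{V}$ of \eqref{entire system a_{ij}} with polynomial growth and at least two nontrivial components. The scaling identity for $N_\beta$ combined with almost-monotonicity gives $N(\mf{V},0,r)\le 1+\tfrac14$ for every $r>0$. Since $N(\mf{V},0,\cdot)$ is non-decreasing, at least $1$ for $\mf{V}\not\equiv 0$, and approaches the polynomial growth rate of $\mf{V}$ at infinity, $\mf{V}$ must have linear growth. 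Corollary \ref{thm: classification limits regular part} and the classification of \cite{SoTe,Wa,Wa2} then force $\mf{V}$ to be one-dimensional with exactly two nontrivial components $V_1,V_2$, whose interface $H_0:=\{V_1=V_2\}$ is a hyperplane through the origin along which $\nabla(V_1-V_2)$ is uniformly non-vanishing on any compact set.

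\textbf{Hausdorff convergence and the main obstacle.} The $\C^2_\loc$ convergence $\mf{v}_n\to\mf{V}$ and the transversality of $V_1-V_2$ across $H_0$, combined with Theorem \ref{corol: decay components vanishing} (which identifies $\Gamma_{\beta_n}\cap B_{r_n}(x_n)$ with $\{u_{i_1,\beta_n}=u_{i_2,\beta_n}\}$ for $n$ large), imply via the implicit function theorem that $(\Gamma_{\beta_n}-x_n)/r_n$ converges in Hausdorff distance to $H_0$ on $\overline{B_1}$; since $\mathcal{R}_{\beta_n}(\rho)\subset\Gamma_{\beta_n}$, this settles the half of the Hausdorff estimate asserting that the regular part stays close to the hyperplane. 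The main difficulty is the reverse direction: every point of $H_0\cap B_1$ must be approached by rescaled points of $\mathcal{R}_{\beta_n}(\rho)$, not merely of $\Gamma_{\beta_n}$. Given $y_\infty\in H_0\cap B_1$, the implicit function theorem produces $y_n\in\Gamma_{\beta_n}$ with $(y_n-x_n)/r_n\to y_\infty$, and since $y_n\to x_0\in\mathcal{R}$, joint continuity of the Almgren quotient yields $N_{\beta_n}(\mf{u}_{\beta_n},y_n,\rho)\to N(\mf{u},x_0,\rho)\le 1+\tfrac14$. The delicate point, which I expect to be the technical heart of the proof, is to upgrade this to a strict inequality uniformly in $y_n$, ensuring $y_n\in\mathcal{R}_{\beta_n}(\rho)$. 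This requires a finer quantitative continuity bound for $N_\beta(\mf{u}_\beta,\cdot,\rho)$ along the regular stratum, building on the properties of $\mathcal{R}_\beta(\rho)$ established in Lemma \ref{lem: basic prop regular part}. Once this is achieved, the rescaled regular interfaces converge to $H_0\cap B_1$ in Hausdorff distance, contradicting our assumption and completing the proof.
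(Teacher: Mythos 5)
Your argument has a genuine gap at its core: you ``apply Theorem \ref{thm: blow-up} at scale $r_n$'', but that theorem is tied to the specific radius $r_\beta(x_\beta)$ defined by $\beta H(\mf{u}_\beta,x_\beta,r_\beta)r_\beta^2=1$ (Lemma \ref{lem: choice of r}), not to an arbitrary sequence $r_n\to0$. For general $r_n$ the rescaled coupling constant $\beta_n r_n^2 H(\mf{u}_{\beta_n},x_n,r_n)$ need not stay bounded, and this is exactly the regime that matters: the paper first proves flatness only for $0<r<C\,r_{\beta_n}(x_n)$ (Lemma \ref{lem: Reif small}), by precisely your mechanism ($\mathcal{C}^2_{\loc}$ blow-up, classification of the linear-growth entire solution, implicit function theorem), and then observes that any scales at which flatness fails must satisfy $r_n/r_{\beta_n}(x_n)\to+\infty$. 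In that regime the rescaled coupling diverges, so the limit of $\mf{w}_n(x)=\mf{u}_{\beta_n}(x_n+r_nx)/H(\mf{u}_{\beta_n},x_n,r_n)^{1/2}$ is \emph{not} a solution of \eqref{entire system a_{ij}} but a segregated configuration $\mf{W}\in\mathcal{G}_{\loc}(\R^N)$, the convergence is only $\mathcal{C}^{0,\alpha}_{\loc}\cap H^1_{\loc}$ (not $\mathcal{C}^2_{\loc}$), and neither Corollary \ref{thm: classification limits regular part} nor your transversality/implicit-function-theorem step is available. The paper instead concludes there by frequency pinching: $1\le N(\mf{W},0,\cdot)\le 5/4$ together with the dichotomy of Proposition \ref{prop: monot segregated} forces $N\equiv1$, hence $\mf{W}=\alpha(x_N^+,x_N^-,0,\dots,0)$, and the Hausdorff convergence of the rescaled sets follows as in Lemma 5.3 of \cite{tt}. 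Note also that Proposition \ref{prop: non C^1} shows $u_{1,\beta}-u_{2,\beta}$ is in general \emph{not} uniformly $\mathcal{C}^1$ near regular points, which is exactly why a single-scale smooth-graph argument such as yours cannot reach macroscopic radii and why only Reifenberg flatness is asserted.

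Two further problems: invoking Theorem \ref{corol: decay components vanishing} inside this proof is circular, since in the paper that theorem is deduced \emph{from} Theorem \ref{thm: reifenberg flat uniform} (through Propositions \ref{prp: local sep} and \ref{clean up regular}); and the step you yourself single out as ``the technical heart'' --- upgrading nearby points of $\Gamma_{\beta_n}$ to points of $\mathcal{R}_{\beta_n}(\rho)$, i.e.\ turning the non-strict limit bound $N(\mf{u},x_0,\rho)\le 1+\tfrac14$ into a strict one uniformly along the interface --- is left unproved, so the proposal is incomplete even on its own terms. To repair the argument you would essentially have to reproduce the paper's two-regime structure: your blow-up argument is correct (and coincides with the paper's) only below the scale $r_\beta$, and a separate segregated-limit argument is needed above it.
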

Here and in what follows $\dist_{\mathcal{H}}$ denotes the Hausdorff distance, defined by
\[
\dist_{\mathcal{H}}(A,B) := \sup\left\{\sup_{a \in A} \dist(a,B), \sup_{b \in B} \dist(b, A) \right\}.
\]
We emphasize that in the previous theorem, the radius $R$ depends on $\rho$ and $\delta$, but not on $\beta$: this is what we mean writing that the condition holds uniformly.

The uniform vanishing Reifenberg flatness condition has several consequences:
first, it implies a uniform-in-$\beta$ local separation property of $\Gamma_\beta$ in a neighbourhood of any point of $\mathcal{R}_\beta(\rho)$. In turn, recalling also Proposition \ref{prop: interfaces are good approximation}, is the key in the proof of Theorem \ref{corol: decay components vanishing}.

At the moment we do not know if the vanishing Reifenberg flatness condition is the optimal property which holds, uniformly in $\beta$, for a subset of $\Gamma_\beta \setminus \Sigma_\beta$. In order to understand if Theorem \ref{thm: reifenberg flat uniform} is really satisfying or not, let us focus for simplicity on a $2$ components system, so that $\Gamma_\beta = \{ u_{1,\beta}-u_{2,\beta} =0\}$,
and let $x_0$ be a regular point of the limit free boundary $\Gamma=\{\mf{u}=\mf{0}\}$. Recalling the decompositions of $\Gamma$ and $\Gamma_\beta$ in regular and singular part, and also the first point in Theorem \ref{corol: decay components vanishing}, we know that for $R>0$ small enough $\{\Gamma_\beta \cap B_R(x_0): \beta\} \cup \Gamma \cap B_R(x_0)$ is a family of $\mathcal{C}^{1,\alpha}$-hypersurfaces. It is natural to wonder if this family is uniformly of class $\mathcal{C}^{1,\alpha}$, that is, if any $\Gamma_\beta$ is locally the graph of a function $\phi_\beta$, with $\{\phi_{\beta}\}$ bounded in $\mathcal{C}^{1,\alpha}$. This would imply the uniform Reifenberg flatness, being a much stronger results. A natural attempt in order to prove uniform $\mathcal{C}^{1,\alpha}$ regularity consists in trying to show that $\{u_{1,\beta}-u_{2,\beta}\}$ is uniformly bounded in $\mathcal{C}^{1,\alpha}(B_R(x_0))$ (we recall that by the reflection law in \cite{tt}, even though the limit function $\mf{u}$ is not regular, the difference $u_1-u_2$ is of class $\mathcal{C}^1$ in a neighbourhood of any point in the regular part of $\Gamma$). With the $\mathcal{C}^{1,\alpha}$-boundedness of $\{u_{1,\beta}-u_{2,\beta}\}$ and other considerations, one could prove the uniform $\mathcal{C}^{1,\alpha}$ regularity of $\Gamma_\beta \cap B_R(x_0)$, thus a natural question is now: is it true that $\{u_{1,\beta}-u_{2,\beta}\}$ is uniformly bounded in $\mathcal{C}^{1,\alpha}$ in $B_R(x_0)$? 

\begin{proposition}\label{prop: non C^1}
If $x_\beta \in \Gamma_\beta$ is such that $x_\beta \to x_0 \in \mathcal{R}$, then in general
\[
\lim_{\beta \to +\infty} \nabla (u_{1,\beta}-u_{2,\beta})(x_\beta) \neq \nabla (u_1-u_2)(x_0).
\]
In particular, in this case $\{u_{1,\beta}-u_{2,\beta}\}$ cannot be bounded in $\mathcal{C}^{1,\alpha}$.
\end{proposition}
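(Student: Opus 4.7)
The plan is to combine the blow-up analysis of Theorem~\ref{thm: blow-up} and Corollary~\ref{thm: classification limits regular part} with a Hamiltonian identity for the one-dimensional limit system, thereby exhibiting a universal rescaling factor, strictly greater than one, between $\lim_\beta \nabla(u_{1,\beta}-u_{2,\beta})(x_\beta)$ and $\nabla(u_1-u_2)(x_0)$.

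I would first apply Theorem~\ref{thm: blow-up} to the sequence $x_\beta$ and use Corollary~\ref{thm: classification limits regular part} to identify the blow-up profile as the unique one-dimensional linearly-growing solution $(V_1,V_2)$ of the two-component limit system, with $e_1$ aligned with the normal to $\Gamma$ at $x_0$. Set $W:=V_1-V_2$: it is odd on $\R$, with $W(0)=0$, $W'(0)=2V_1'(0)=:2\beta_0>0$, and $W(t)/t\to\gamma_0:=V_1'(+\infty)>0$ as $|t|\to\infty$. Differentiating $\mf{v}_\beta(x)=\mf{u}_\beta(x_\beta+r_\beta x)/H_\beta^{1/2}$ at $x=0$ yields
\[
\nabla(u_{1,\beta}-u_{2,\beta})(x_\beta) = \frac{H_\beta^{1/2}}{r_\beta}\,\nabla(v_{1,\beta}-v_{2,\beta})(0) \longrightarrow \kappa\cdot 2\beta_0\, e_1,
\]
with $\kappa:=\lim_\beta H_\beta^{1/2}/r_\beta$. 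The reflection law of \cite{tt} makes $u_1-u_2$ of class $\mathcal{C}^1$ across $\Gamma$ near $x_0$, with $\nabla(u_1-u_2)(x_0)=c\,e_1$ for some $c>0$. Evaluating the identity $(u_{1,\beta}-u_{2,\beta})(x_\beta+r_\beta R_\beta e_1) = H_\beta^{1/2}(v_{1,\beta}-v_{2,\beta})(R_\beta e_1)$ along an intermediate scale $R_\beta\to\infty$ with $r_\beta R_\beta\to 0$ then determines $\kappa=c/\gamma_0$: the left side behaves like $c\,r_\beta R_\beta(1+o(1))$ by the $\mathcal{C}^{0,\alpha}_{\loc}$-convergence $u_{i,\beta}\to u_i$ and the Taylor expansion of $u_1-u_2$, while the right side behaves like $H_\beta^{1/2}\gamma_0 R_\beta(1+o(1))$ by the asymptotic linearity of $W$. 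This gives
\[
\lim_{\beta\to+\infty} \nabla(u_{1,\beta}-u_{2,\beta})(x_\beta) = \frac{2\beta_0}{\gamma_0}\,\nabla(u_1-u_2)(x_0).
\]

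To conclude, I would exploit a Hamiltonian identity for the 1D limit system. The energy
\[
E(t) := \tfrac12\bigl((V_1'(t))^2+(V_2'(t))^2\bigr) - \tfrac{a_{12}}{2}V_1^2(t) V_2^2(t)
\]
is conserved along solutions, and evaluating at $t=0$ (where $V_1(0)=V_2(0)=:\alpha>0$ and $V_1'(0)=-V_2'(0)=\beta_0$) versus $t=+\infty$ (where $V_1V_2\to 0$, $V_1'\to\gamma_0$, $V_2'\to 0$) gives $\beta_0^2-\tfrac{a_{12}}{2}\alpha^4 = \tfrac{\gamma_0^2}{2}$, whence
\[
(2\beta_0)^2 = 2\gamma_0^2 + 2a_{12}\alpha^4 > \gamma_0^2.
\]
Thus $2\beta_0/\gamma_0>1$ strictly, so the limiting gradient is a strict scalar multiple of $\nabla(u_1-u_2)(x_0)$, proving the proposition.

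I expect the main obstacle to be the intermediate-scale matching used to identify $\kappa$: Theorem~\ref{thm: blow-up} delivers convergence only in $\mathcal{C}^2_{\loc}$, and so making the matching rigorous requires uniform-in-$\beta$ estimates of $\mf{v}_\beta-\mf{V}$ at the diverging scale $R_\beta$, together with a quantitative rate for $W(t)/t\to\gamma_0$. Revisiting the compactness/monotonicity arguments underlying Theorem~\ref{thm: blow-up} may be necessary to obtain these estimates.
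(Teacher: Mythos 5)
Your Hamiltonian computation is correct, and it is in fact the same mechanism the paper's own proof exploits (the conserved quantity $\tfrac12((V_1')^2+(V_2')^2)-\tfrac{a_{12}}{2}V_1^2V_2^2$ forces the slope of $V_1-V_2$ at the interface to exceed its asymptotic slope). The genuine gap is the intermediate-scale matching that is supposed to identify $\kappa=\lim_\beta H_\beta^{1/2}/r_\beta=c/\gamma_0$. Since $\beta H_\beta r_\beta^2=1$ (Lemma \ref{lem: choice of r}), one has $H_\beta^{1/2}/r_\beta=\beta^{1/2}H_\beta$, and by Lemma \ref{lem: H con m} the existence and strict positivity of this limit is equivalent to the sharp decay $u_{i,\beta}(x_\beta)\sim\beta^{-1/4}$ with an identified constant. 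The paper only proves the upper bound (Theorem \ref{thm: global upper estimate}) and the sub-optimal lower bound with exponent $1/4+\eps$ (Theorem \ref{thm: lowe estimate}), and explicitly states that the exponent $1/4$ is conjectural; so your matching step, were it rigorous, would settle that conjecture as a by-product, which is a strong sign it cannot be carried out with the available tools. Concretely, the convergences $\mf{v}_\beta\to\mf{V}$ in $\mathcal{C}^2_{\loc}$ and $u_{i,\beta}\to u_i$ in $\mathcal{C}^{0,\alpha}_{\loc}$ come with no rates, so you cannot guarantee one sequence $R_\beta\to\infty$ that is simultaneously slow enough for $(v_{1,\beta}-v_{2,\beta})(R_\beta e_1)=\gamma_0 R_\beta(1+o(1))$ and fast enough for the macroscopic Taylor expansion $c\,r_\beta R_\beta$ to dominate the rate-less error $\|(u_{1,\beta}-u_{2,\beta})-(u_1-u_2)\|_{L^\infty}$ near $x_0$; a priori $\beta^{1/2}H_\beta$ could even tend to $0$ along a subsequence, in which case your formula for the limit gradient gives nothing. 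A smaller unproved point is the alignment of the axis of the blow-up profile with the normal to $\mathcal{R}$ at $x_0$, which your comparison of the two gradients tacitly assumes.

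Note also that the statement only claims failure of gradient convergence ``in general'', so a single counterexample suffices, and this is exactly what the paper does: it takes the explicit family $(u_R,v_R)=\tfrac1R\,(u(R\,\cdot),v(R\,\cdot))$ built from the unique $1$-dimensional entire solution, for which $\beta(R)=R^4\to+\infty$ and the limit profile is $(x^+,x^-)$; if $u_R-v_R$ converged in $\mathcal{C}^1$ near $0$ one would get $u'(0)=1/2$, contradicting the conserved Hamiltonian $(u')^2+(v')^2-u^2v^2\equiv1$. Your scheme aims at a stronger, universal statement (the limit of $\nabla(u_{1,\beta}-u_{2,\beta})(x_\beta)$ is a fixed multiple $2\beta_0/\gamma_0>1$ of $\nabla(u_1-u_2)(x_0)$ for every family), which would be interesting, but as written it rests on quantitative blow-up estimates that neither the proposal nor the paper establishes.
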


For this reason, we think that the uniform Reifenberg flatness can be already considered as a relevant result.

\begin{remark}
Thanks to \cite[Section 8]{tt}, it is known that limits of the strongly competing system \eqref{main system k comp} share a number of properties with limits of the Lotka-Volterra system
\begin{equation}\label{LV}
-\Delta u_{i,\beta} = f_{i,\beta}(x,u_{i,\beta})-\beta u_{i,\beta} \sum_{j \neq i} a_{ij} u_{j,\beta} \qquad \text{in $\Omega$}. 
\end{equation}
It is then remarkable to observe that, while by the nature of the interaction $\{u_{1,\beta}-u_{2,\beta}\}$ is uniformly bounded in $\mathcal{C}^{1,\alpha}$ if $\{\mf{u}_\beta\}$ is a family of solutions to \eqref{LV}, this is not the case for \eqref{main system k comp}.
\end{remark}

\begin{remark}
At a first glance the reader could think that $\Gamma_\beta \setminus \Sigma_\beta$ would have been a more natural notion of regular part of $\Gamma_\beta$. But we point out that we cannot expect any uniform-in-$\beta$ regularity property for $\Gamma_\beta \setminus \Sigma_\beta$, since this relatively open subset of $\Gamma_\beta$ naturally approaches the singular part $\Sigma_\beta$ (and thus $\Sigma$). This is why we introduced $\mathcal{R}_\beta(\rho)$.   
\end{remark}

\subsection*{Structure of the paper and some notation} The second section is devoted to some preliminaries on monotonicity formulae for solutions to \eqref{main system k comp} and their limits, most of which are already known, and to the collection of some useful results regarding entire solutions of system \eqref{entire system a_{ij}}.  
Theorem \ref{thm: global upper estimate} and Proposition \ref{prop: interfaces are good approximation} are proved in Section \ref{sec: decay 1}. Theorems \ref{thm: decay higher multiplicity}-\ref{cor: improved decay singular sequence} are the object of Section \ref{sec: decay 2}, where we also prove Theorem \ref{thm: blow-up} and its corollaries. The uniform Reifeberg flatness condition and its consequences, among which Theorem \ref{corol: decay components vanishing}, are addressed in Section \ref{sec: Reif}.

With the exception of the proof of Theorem \ref{thm: global upper estimate}, we will consider for the sake of simplicity the system with $f_{i,\beta} \equiv0$, that is
\begin{equation}\label{system simplified}
\begin{cases}
\Delta u_i= \beta u_i  \sum_{j \neq i} a_{ij} u_j^2  &\text{ in $\Omega$}\\
u_i > 0 &\text{ in $\Omega$},
\end{cases}
\end{equation}
with $a_{ij}=a_{ji}>0$ and $\beta>0$. All the results that we present hold for the complete system \eqref{main system k comp}, as stated in the introduction. The proofs differ mainly for technical details, related to the fact that we shall use several monotonicity formulae, which in presence of $f_{i,\beta} \not \equiv 0$ become almost-monotonicity formulae, and hence in most of the forthcoming estimates exponential remainder terms appear. The point is that, thanks to (F1) and \eqref{boundedness}, such terms can be conveniently controlled. The interested reader can fill the details combining the approach here with that in \cite{SoZi}, where all the results are proved in full generality, and where we had to deal with the same technical complications, see also the remarks in the next sections for further details. We chose to focus on system \eqref{system simplified} with the aim of making our ideas more transparent, and the proofs technically simpler.

In this paper we adopt a notation which is mainly standard. We mention that we denote by $B_r(x)$ the ball of center $x$ and radius $r$, writing simply $B_r$ in the frequent case $x=0$. We recall that we often omit the expression ``up to a subsequence". Finally, $C$ will always denote a positive constant independent of $\beta$, whose exact value can be different from line to line.

\section{Preliminaries}\label{sec: prel}

\subsection{Monotonicity formulae for solutions to competing systems}
We collect some known and new results concerning monotonicity formulae for solutions of \eqref{system simplified}, for which we refer to \cite[Section 3.1]{SoZi} (see also \cite{BeTeWaWe,CaffLin,NoTaTeVe} for similar results).

For $x_0 \in \Omega$ and $r>0$ such that $B_r(x_0) \Subset \Omega$, we define
\begin{equation}\label{def N regular}
\begin{split}
  \bullet \quad & H(\mf{u},x_0,r):= \frac{1}{r^{N-1}} \int_{\partial B_r(x_0)} \sum_{i=1}^k u_i^2\\
  \bullet \quad & E(\mf{u},x_0,r):= \frac{1}{r^{N-2}} \int_{B_r(x_0)} \sum_{i=1}^k |\nabla u_i|^2+ 2\beta\sum_{1\le i<j\le k} a_{ij} u_i^2 u_j^2\\
  \bullet \quad & N(\mf{u},x_0,r):= \frac{E(\mf{u},x_0,r)}{H(\mf{u},x_0,r)} \qquad (\text{Almgren frequency function}).
\end{split}
\end{equation}


\begin{proposition}\label{prop: almgren}
In the previous setting, for $N \le 4$ the function $r \mapsto N(\mf{u},x_0,r)$ is monotone non-decreasing.
Moreover,
\begin{equation}\label{der H}
\frac{d}{dr} \log H(\mf{u},x_0,r) = \frac{2}{r} N(\mf{u},x_0,r) \ge 0.
\end{equation}
\end{proposition}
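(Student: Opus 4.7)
The plan is to compute $H'(r)$ and $E'(r)$ by change of variables plus integration by parts, then combine them through a Cauchy--Schwarz inequality on $\partial B_r(x_0)$.

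First, rescaling $y = (x-x_0)/r$ gives $H(\mf{u},x_0,r) = \int_{\partial B_1} \sum_i u_i^2(x_0+ry)\,dS_y$, so differentiating under the integral sign and undoing the change of variables yields $H'(r) = \frac{2}{r^{N-1}} \int_{\partial B_r(x_0)} \sum_i u_i \partial_\nu u_i\, dS$. Testing the PDE against $u_i$ on $B_r(x_0)$, summing in $i$, and integrating by parts produces
\[
\int_{\partial B_r(x_0)} \sum_i u_i \partial_\nu u_i = \int_{B_r(x_0)} \Big(\sum_i |\nabla u_i|^2 + 2\beta \sum_{i<j} a_{ij} u_i^2 u_j^2\Big) = r^{N-2} E(\mf{u},x_0,r),
\]
where the symmetry $a_{ij}=a_{ji}$ was used to pair the interaction sum. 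Consequently $H'(r) = 2 E(r)/r$, which is exactly \eqref{der H}.

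Next, I differentiate $E$. From the definition $E'(r) = -\tfrac{N-2}{r} E(r) + r^{-(N-2)}\int_{\partial B_r(x_0)} (\sum_i|\nabla u_i|^2 + 2\beta \sum_{i<j} a_{ij} u_i^2 u_j^2)$. To replace the tangential contribution $|\nabla u_i|^2$ by the normal one $|\partial_\nu u_i|^2$ I apply a Pohozaev-type identity: test the equation with $(x-x_0)\cdot \nabla u_i$, sum on $i$, and rewrite the interaction term as $\tfrac{1}{2}\sum_{i<j} a_{ij}\nabla(u_i^2 u_j^2)\cdot (x-x_0)$ using the symmetry of $(a_{ij})$. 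After one further integration by parts the $\int_{B_r}\sum|\nabla u_i|^2$ contributions cancel, and a careful bookkeeping should give
\[
E'(r) = \frac{2}{r^{N-2}}\int_{\partial B_r(x_0)} \sum_i |\partial_\nu u_i|^2 + \frac{\beta}{r^{N-2}} \sum_{i<j} a_{ij} \int_{\partial B_r(x_0)} u_i^2 u_j^2 + \frac{(4-N)\beta}{r^{N-1}} \sum_{i<j} a_{ij} \int_{B_r(x_0)} u_i^2 u_j^2.
\]
The dimensional restriction $N \leq 4$ is used precisely here, guaranteeing that the last, bulk term is non-negative, so that $E'(r) \geq \frac{2}{r^{N-2}} \int_{\partial B_r(x_0)} \sum_i |\partial_\nu u_i|^2$.

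To conclude, I apply Cauchy--Schwarz on the vector field $\mf{u}$ on $\partial B_r(x_0)$:
\[
\Big(\int_{\partial B_r(x_0)} \sum_i u_i \partial_\nu u_i\Big)^2 \leq \Big(\int_{\partial B_r(x_0)} \sum_i u_i^2\Big)\Big(\int_{\partial B_r(x_0)} \sum_i |\partial_\nu u_i|^2\Big) = r^{N-1} H(r) \int_{\partial B_r(x_0)} \sum_i |\partial_\nu u_i|^2.
\]
Since the left-hand side equals $(r^{N-2} E(r))^2$ by the identity established in step one, this gives $\int_{\partial B_r(x_0)} \sum_i|\partial_\nu u_i|^2 \geq r^{N-3} E(r)^2/H(r)$. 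Plugging into the lower bound for $E'$ obtained above yields $E'(r) \geq 2 E(r)^2/(r H(r)) = 2 N(r) E(r)/r$, i.e. $E'/E \geq 2N/r = H'/H$. Taking logarithmic derivatives in $N=E/H$ gives $(\log N)' \geq 0$, so $r \mapsto N(\mf{u},x_0,r)$ is non-decreasing.

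The main obstacle I foresee is the careful bookkeeping in the Pohozaev identity: identifying the correct coefficients so that the volume term $\int_{B_r}\sum|\nabla u_i|^2$ cancels and only the $(4-N)\beta$ bulk remainder survives alongside the boundary contributions; this is what pins down the precise dimensional threshold $N \leq 4$ and what makes the Cauchy--Schwarz step yield exactly the quantity $2N/r$ needed to dominate $H'/H$.
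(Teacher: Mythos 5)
Your argument is correct and is essentially the standard proof that the paper relies on (it cites \cite[Section 3.1]{SoZi} rather than proving the proposition): the identity $H'=2E/r$ via testing with $u_i$, the Pohozaev/Rellich identity giving
$E'(r)=\tfrac{2}{r^{N-2}}\int_{\partial B_r}\sum_i|\partial_\nu u_i|^2+\tfrac{\beta}{r^{N-2}}\int_{\partial B_r}\sum_{i<j}a_{ij}u_i^2u_j^2+\tfrac{(4-N)\beta}{r^{N-1}}\int_{B_r}\sum_{i<j}a_{ij}u_i^2u_j^2$ (which I checked, and which indeed pins down $N\le 4$), and Cauchy--Schwarz on $\partial B_r$ to conclude $(\log N)'\ge 0$. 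No gaps beyond the routine remark that $H>0$ (and in fact $E>0$) since the $u_i$ are positive, so the logarithmic differentiation is legitimate.
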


As a consequence of the monotonicity of the Almgren frequency function, we have the following.

\begin{proposition}\label{prop: monot e+h}
Let $\mf{u}$ be a solution of \eqref{system simplified}, and for some $x_0 \in \Omega$ and $\tilde r>0$, let $\gamma:= N(\mf{u},x_0,\tilde r)$. Then
\[
r \mapsto \frac{E(\mf{u},x_0,r)+H(\mf{u},x_0,r)}{r^{2\gamma}} \quad \text{is non-decreasing for $r>\tilde r$}.
\]
\end{proposition}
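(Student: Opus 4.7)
The plan is to prove the stronger statement that both $E(\mathbf{u}, x_0, r)/r^{2\gamma}$ and $H(\mathbf{u}, x_0, r)/r^{2\gamma}$ are separately non-decreasing for $r > \tilde r$; the conclusion for the sum then follows immediately by summation. The only ingredients required are the definition $N = E/H$, the identity $(\log H)'(r) = 2N(r)/r$ recorded in \eqref{der H}, and the monotonicity of $N$ already established in Proposition \ref{prop: almgren}.

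First I would rewrite $E(r) = N(r)H(r)$, differentiate, and use that \eqref{der H} is equivalent to $H'(r) = 2E(r)/r$, obtaining
\[
E'(r) \;=\; N'(r) H(r) + N(r)\,H'(r) \;=\; N'(r) H(r) + \frac{2 N(r)}{r} E(r).
\]
Since $N'(r) \ge 0$ by Proposition \ref{prop: almgren} and $H \ge 0$, the first summand is non-negative, so $(\log E)'(r) \ge 2N(r)/r$. Of course $(\log H)'(r) = 2N(r)/r$ directly from \eqref{der H}, so both logarithmic derivatives are bounded below by the same quantity $2N(r)/r$.

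Second I would insert the monotonicity of $N$: for $r > \tilde r$ we have $N(r) \ge N(\tilde r) = \gamma$, hence
\[
(\log E)'(r) \ge \frac{2\gamma}{r}, \qquad (\log H)'(r) \ge \frac{2\gamma}{r} \qquad \text{for every } r > \tilde r.
\]
Equivalently, $\frac{d}{dr}\log\bigl(E(r)/r^{2\gamma}\bigr) \ge 0$ and $\frac{d}{dr}\log\bigl(H(r)/r^{2\gamma}\bigr) \ge 0$ on $(\tilde r, \infty)$, so both $E(r)/r^{2\gamma}$ and $H(r)/r^{2\gamma}$ are non-decreasing there. Adding the two yields the claim.

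No real obstacle is anticipated: the argument is essentially a bookkeeping exercise once Proposition \ref{prop: almgren} is granted. The only mild subtlety is that $H$ may vanish at isolated radii, in which case the logarithmic manipulations must be interpreted appropriately (for instance by working on the open set where $H > 0$ and extending by continuity, or by rephrasing the inequality $E' H \ge 2\gamma EH/r$ and $H'H \ge 2\gamma H^2 /r$ without dividing); this is routine and does not affect the conclusion.
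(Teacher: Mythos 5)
Your proposal is correct and rests on exactly the same two ingredients as the paper's proof, namely the identity \eqref{der H} together with the monotonicity of $N(\mf{u},x_0,\cdot)$ from Proposition \ref{prop: almgren}; the only cosmetic difference is that you show the slightly stronger fact that $E(\mf{u},x_0,r)/r^{2\gamma}$ and $H(\mf{u},x_0,r)/r^{2\gamma}$ are each non-decreasing and then sum, whereas the paper factors $E+H=(N+1)H$ and uses the monotonicity of $N+1$ and of $H/r^{2\gamma}$. Your closing worry about $H$ (or $E$) vanishing is moot here, since all components are strictly positive (and strictly subharmonic), so both quantities are strictly positive for every $r>0$.
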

\begin{proof}
At first, integrating \eqref{der H} in $(\tilde r,r)$, we deduce that
\[
r \mapsto \frac{H(\mf{u},x_0,r)}{r^{2\gamma}} \quad \text{is non-decreasing for $r>\tilde r$}.
\]
Therefore, using also the monotonicity of $N(\mf{u},x_0,\cdot)$, it results
\begin{align*}
\frac{d}{dr} \log \left( \frac{E(\mf{u},x_0,r)+H(\mf{u},x_0,r)}{r^{2\gamma}} \right) &= \frac{d}{dr} \log \left( N(\mf{u},x_0,r) + 1 \right) \\
& +   \frac{d}{dr} \log \left( \frac{H(\mf{u},x_0,r)}{r^{2\gamma}} \right)  \ge 0. \qedhere
\end{align*}
\end{proof}

Finally, we recall a version of the Alt-Caffarelli-Friedman monotonicity formula suited to deal with solutions of \eqref{system simplified}, see Theorem 3.14 in \cite{SoZi} and also Theorem 4.3 in \cite{Wa}. To this aim, we introduce the functionals
\begin{align*}
J_1(r) & := \int_{B_r} \frac{|\nabla u_1|^2 + \beta a_{12} u_1^2 u_2^2}{|x|^{N-2}} \\
J_2(r) & := \int_{B_r} \frac{|\nabla u_2|^2 +  \beta a_{12} u_1^2 u_2^2}{|x|^{N-2}},
\end{align*}
and we define $J(r):= J_1(r) J_2(r) / r^4$.

\begin{proposition}\label{prop: ACF}
Let $\mf{u}$ be a solution of \eqref{system simplified}, with $\Omega \Supset B_R(0)$ for some $R>1$, and let us assume that there exist $\lambda,\mu>0$ such that
\[
\frac{1}{\lambda} \le \frac{ \int_{\pa B_r} u_1^2 }{\int_{\pa B_r} u_2^2} \le \lambda \quad \text{and} \quad \frac{1}{r^{N-1}} \int_{\pa B_r} u_1^2 \ge \mu
\]
for every $r \in [1,R]$. Then there exists $C>0$ depending only on $\lambda,\mu$, and on the dimension $N$, such that
\[
r \mapsto J(r) \exp\{-C (\beta r^2)^{-1/4}\} \quad \text{is non-decreasing for $r \in [1,R]$}.
\] 
\end{proposition}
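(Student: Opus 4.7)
The plan is to follow the classical Alt--Caffarelli--Friedman strategy, adapted to the competitive setting as in Conti--Terracini--Verzini, Berestycki--Terracini--Wang--Wei, and \cite{SoZi,Wa}, while tracking the correction due to the large but finite parameter $\beta$. Writing $J(r) = J_1(r)J_2(r)/r^4$ and taking the logarithmic derivative reduces the problem to proving
\[
\frac{rJ_1'(r)}{J_1(r)} + \frac{rJ_2'(r)}{J_2(r)} \ge 4 - C(\beta r^2)^{-1/4},
\]
so it suffices to bound each ratio from below separately.

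For $i\in\{1,2\}$ and $j$ the other index, \eqref{system simplified} and the positivity of all $a_{il}$ and $u_l$ give the pointwise differential inequality $\Delta u_i \ge \beta a_{12} u_i u_j^2$ (so that the presence of extra components $u_l$ with $l\neq 1,2$ is harmless). Differentiating $J_i$ in $r$ produces a boundary integral on $\pa B_r$, while a Rellich-type identity on $B_r$ (multiply the differential inequality above by $u_i/|x|^{N-2}$ and integrate by parts) relates $J_i(r)$ to the same boundary energy, modulo a non-negative remainder. Decomposing $|\nabla u_i|^2 = |\pa_\nu u_i|^2 + r^{-2}|\nabla_\tau u_i|^2$ on $\pa B_r$ and applying Cauchy--Schwarz, one arrives at
\[
\frac{rJ_i'(r)}{J_i(r)} \ge 2\gamma_i(r),
\qquad
\gamma_i(r) = \sqrt{\Lambda_i(r)+\Big(\tfrac{N-2}{2}\Big)^2} - \tfrac{N-2}{2},
\]
where $\Lambda_i(r)$ is the penalized spherical Rayleigh quotient
\[
\Lambda_i(r) = \inf_{\varphi} \frac{\int_{\pa B_r}\bigl(|\nabla_\tau \varphi|^2 + \beta a_{12} r^2 u_j^2 \varphi^2\bigr)}{\int_{\pa B_r}\varphi^2},
\]
the natural generalization of the Friedland--Hayman characteristic constant with the competitor $u_j^2$ acting as a potential.

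The main technical obstacle is then the \emph{approximate Friedland--Hayman inequality}
\[
\gamma_1(r) + \gamma_2(r) \ge 2 - C(\beta r^2)^{-1/4}.
\]
The classical result gives $\gamma(\omega_1)+\gamma(\omega_2) \ge 2$ when $\omega_1,\omega_2\subset\S^{N-1}$ have disjoint supports. Here the supports of $u_1|_{\pa B_r}$ and $u_2|_{\pa B_r}$ need not be disjoint, but the penalty $\beta a_{12} r^2 u_1^2 u_2^2$ forces a quantitative near-segregation, with error $(\beta r^2)^{-1/4}$. The two structural hypotheses enter exactly here: the ratio bound $\int_{\pa B_r}u_1^2 \big/ \int_{\pa B_r}u_2^2 \in [1/\lambda,\lambda]$ and the non-degeneracy $r^{-(N-1)}\int_{\pa B_r} u_1^2 \ge \mu$ prevent either component from being negligible on $\pa B_r$, which is what allows the $L^1$-smallness of $u_1 u_2$ (forced by the penalty in $\Lambda_i$) to be upgraded to an approximate disjointness of supports on the sphere, uniformly in $r\in[1,R]$. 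A statement of this flavour, with the sharp exponent $1/4$, is established in \cite{BeTeWaWe,Wa} for two components with $a_{12}=1$, and the extension to the present functionals only requires absorbing the harmless factor $a_{12}$ into the constant $C$.

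Finally, combining the two estimates yields
\[
\frac{d}{dr}\log J(r) \ge \frac{2(\gamma_1(r)+\gamma_2(r))-4}{r} \ge -\frac{C}{\beta^{1/4} r^{3/2}}.
\]
Since $\frac{d}{dr}\bigl[(\beta r^2)^{-1/4}\bigr] = -\tfrac12 \beta^{-1/4} r^{-3/2}$, the right-hand side equals $\frac{d}{dr}\bigl[2C(\beta r^2)^{-1/4}\bigr]$, so $\frac{d}{dr}\bigl[\log J(r) - 2C(\beta r^2)^{-1/4}\bigr] \ge 0$ on $[1,R]$, and exponentiation gives the claimed monotonicity of $r\mapsto J(r)\exp\{-C'(\beta r^2)^{-1/4}\}$.
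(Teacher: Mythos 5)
The paper itself gives no proof of Proposition \ref{prop: ACF}: it is recalled from \cite{SoZi} (Theorem 3.14) and \cite{Wa} (Theorem 4.3), and your sketch reproduces exactly the strategy behind those cited proofs — logarithmic derivative of $J$, a Rellich-type identity (using $\Delta u_i \ge \beta a_{12} u_i u_j^2$, so extra components are indeed harmless) giving $r J_i'(r)/J_i(r) \ge 2\gamma_i(r)$ with a penalized spherical Rayleigh quotient, a perturbed Friedland--Hayman inequality with error $C(\beta r^2)^{-1/4}$, and integration in $r$. Your outline is correct, with the caveat that the crux — the quantitative spectral estimate $\gamma_1(r)+\gamma_2(r) \ge 2 - C(\beta r^2)^{-1/4}$, which is precisely where the normalization hypotheses involving $\lambda$ and $\mu$ are needed (they keep the boundary masses comparable and non-degenerate under the truncation/competitor construction whose optimization produces the exponent $1/4$) — is invoked by citation rather than proved, so the proposal is a faithful reduction to the known lemma of \cite{BeTeWaWe,Wa,SoZi} rather than a self-contained argument, which is the same level of detail at which the paper treats this statement.
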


\subsection{Almgren monotonicity formulae for segregated configurations}\label{sub: segregated configurations}

In \cite[Definition 1.2]{tt}, the authors introduced the sets $\mathcal{G}(\Omega)$ and $\mathcal{G}_{\loc}(\Omega)$, classes of segregated vector valued functions sharing several properties with solutions of competitive systems, including a version of the Almgren monotonicity formula. What is important for us is that, as already recalled in the introduction, if $\{\mf{u}_\beta\}$ is a sequence of solutions of \eqref{main system k comp} (or of the simplified system \eqref{system simplified}) and $\mf{u}_\beta \to \mf{u}$ locally uniformly and in $H^1_{\loc}$, then $\mf{u} \in \mathcal{G}(\Omega)$.

For elements of $\mathcal{G}(\Omega)$, with a slight abuse of notation, let 
\begin{equation}\label{def N segregated}
\begin{split}
  \bullet \quad & E(\mf{v},x_0,r):= \frac{1}{r^{N-2}} \int_{B_r(x_0)} \sum_{i=1}^k |\nabla v_i|^2\\
  \bullet \quad & N(\mf{v},x_0,r):= \frac{E(\mf{v},x_0,r)}{H(\mf{v},x_0,r)} \qquad (\text{Almgren frequency function}),
\end{split}
\end{equation}
where $H$ is defined as in \eqref{def N regular}.


We recall some known facts. The following are a monotonicity formula for functions of $\mathcal{G}(\Omega)$, and a lower estimate of $N(\mf{v},x_0,0^+)$ for points $x_0$ on the free boundary $\{\mf{v}=\mf{0}\}$, for which we refer to \cite[Theorem 2.2 and Corollary 2.7]{tt} and \cite[Lemma 4.2]{SoTe}.

\begin{proposition}\label{prop: monot segregated}
Let $\mf{v} \in \mathcal{G}(\Omega)$. For every $x_0 \in \Omega$ and $r >0$ such that $B_r(x_0) \Subset \Omega$, we have $H(\mf{v},x_0,r) \neq 0$, and the function $N(\mf{v},x_0,r)$ is absolutely continuous and non-decreasing in $r$. Moreover
\[
\frac{d }{d r} \log H(\mf{v},x_0,r) = \frac{2N(\mf{v},x_0,r)}{r},
\]
and $N(\mf{v},x_0,r) \equiv \alpha$ is constant for $r \in (r_1,r_2)$ if and only if $\mf{v}=r^\alpha \mf{g}(\theta)$ is homogeneous of degree $\alpha$ in $\{r_1<|x|<r_2\}$ (here $(r,\theta)$ is a system of polar coordinates centred in $x_0$). Finally, if $x_0 \in \{\mf{v}=\mf{0}\}$, then either $N(\mf{v},x_0,0^+) =1$, or $N(\mf{v},x_0,0^+) \ge 3/2$.
\end{proposition}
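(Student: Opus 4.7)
The plan is to establish the identity and the monotonicity via the standard two-step domain variation argument, adapted to the class $\mathcal{G}(\Omega)$ where the components are harmonic on their positivity sets and pairwise segregated. The crucial mechanism throughout is that the interface contributions arising from integration by parts cancel thanks to the reflection law built into the definition of $\mathcal{G}(\Omega)$: that is why everything works despite the lack of classical regularity of $\mf{v}$ across the free-boundary.

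For the identity $\frac{d}{dr}\log H = 2N/r$ I would rescale $H$ to an integral on the unit sphere, differentiate under the integral, and then integrate by parts on each positivity set $\{v_i>0\}$. Since $\Delta v_i=0$ there and $v_i$ vanishes on $\partial \{v_i>0\}$, the sum of the bulk contributions reduces exactly to $E(\mf{v},x_0,r)$, with no net flux on the interfaces, which yields
\[
H'(\mf{v},x_0,r) = \frac{2}{r^{N-1}} \int_{\pa B_r(x_0)} \sum_i v_i\,\pa_\nu v_i = \frac{2}{r}\,E(\mf{v},x_0,r).
\]
Non-vanishing of $H(r)$ then follows by integrating this ODE forward and backward from any radius where $H>0$; such a radius exists because $\mf{v}\not\equiv \mf{0}$ in a neighbourhood of $x_0$ (elements of $\mathcal{G}(\Omega)$ are non-trivial by definition).

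For the monotonicity of $N$ I would derive a Pohozaev-type identity for $E(r)$: on each $\{v_i>0\}$ multiply $\Delta v_i=0$ by $(x-x_0)\cdot\nabla v_i$ and integrate; summing in $i$ the interface contributions cancel once more, producing
\[
E'(\mf{v},x_0,r) = \frac{2}{r^{N-2}}\int_{\pa B_r(x_0)} \sum_i (\pa_\nu v_i)^2.
\]
Combining the two derivative formulas, one gets
\[
(\log N(r))' \;=\; \frac{2}{r}\cdot\frac{\bigl(\int_{\pa B_r}\!\sum (\pa_\nu v_i)^2\bigr)\bigl(\int_{\pa B_r}\!\sum v_i^2\bigr) - \bigl(\int_{\pa B_r}\!\sum v_i\,\pa_\nu v_i\bigr)^2}{E(\mf{v},x_0,r)\,H(\mf{v},x_0,r)},
\]
which is $\ge 0$ by Cauchy--Schwarz. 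Equality on an interval $(r_1,r_2)$ forces $\pa_\nu v_i(x)=\lambda(r)\,v_i(x)$ on every $\pa B_r$, and integration of this radial ODE, combined with the Euler identity for homogeneous functions, yields $\mf{v}=r^\alpha\mf{g}(\theta)$ with $\alpha=N(\mf{v},x_0,r)$ constant; conversely such a function has constant Almgren quotient by direct computation.

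The final point, the dichotomy $N(\mf{v},x_0,0^+)\in\{1\}\cup [3/2,+\infty)$ at a free-boundary point, is the hardest step, and the one I would tackle by a blow-up argument. Set $\mf{v}_r(x):=\mf{v}(x_0+rx)/H(\mf{v},x_0,r)^{1/2}$, normalized so that $H(\mf{v}_r,0,1)=1$. The monotonicity of $N$ gives uniform $H^1_{\loc}$ bounds on $\mf{v}_r$, and a compactness argument for elements of $\mathcal{G}_{\loc}(\R^N)$ (together with the fact that $N$ is scale-invariant and its limit value along $r\to 0^+$ exists) produces a non-trivial limit $\mf{V}\in \mathcal{G}_{\loc}(\R^N)$ which by the rigidity clause of the previous paragraph is homogeneous of degree $\alpha:=N(\mf{v},x_0,0^+)$. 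The bound $\alpha\ge 1$ comes from the fact that each non-zero component of $\mf{V}$ is harmonic on an open cone, homogeneous, and vanishes at the origin, so its order of vanishing is a positive integer; and the gap $\alpha\ge 3/2$ whenever $\alpha>1$ is the key rigidity theorem for homogeneous members of $\mathcal{G}(\R^N)$ proved in \cite{tt}, obtained by an ACF-type argument on the trace of $\mf{V}$ on $\S^{N-1}$ that forbids spherical eigenvalue combinations corresponding to homogeneities in $(1,3/2)$. This spherical eigenvalue gap is by far the main obstacle and the reason this statement is not purely a soft consequence of Cauchy--Schwarz.
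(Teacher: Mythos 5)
Your overall route coincides with the paper's, which does not reprove this proposition at all: it refers to \cite[Theorem 2.2 and Corollary 2.7]{tt} for the Almgren formula and the identity for $H$ (whose proof is exactly the computation you sketch, with the interface terms handled by the Pohozaev-type identities that are built into the definition of $\mathcal{G}(\Omega)$, as you correctly observe), and to \cite[Lemma 4.2]{SoTe} for the dichotomy for \emph{homogeneous} elements of $\mathcal{G}_{\loc}(\R^N)$; the reduction of a general $\mf{v}\in\mathcal{G}(\Omega)$ to the homogeneous case by a normalized blow-up, using scale invariance and constancy of the frequency of the limit, is exactly what the remark following the proposition carries out, so your last paragraph matches the paper's argument.

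One step as you wrote it is wrong: the claim that $\alpha\ge 1$ because each nontrivial component of the blow-up limit is harmonic on a cone, homogeneous and vanishing at the origin, ``so its order of vanishing is a positive integer''. Integer vanishing order holds for functions harmonic in a full neighbourhood, not on a cone: $r^{1/2}\cos(\theta/2)$ is harmonic on a slit plane and homogeneous of degree $1/2$, and segregated homogeneous configurations of degree $3/2$ do exist --- they are precisely why the threshold in the dichotomy is $3/2$. If your argument were valid it would force $\alpha\in\N$ and make the gap statement trivial, so it proves too much. The bound $\alpha\ge1$ has to come from the cited results themselves (it is part of \cite[Corollary 2.7]{tt}, and follows e.g. from the Lipschitz continuity of elements of $\mathcal{G}$ or from an ACF/Friedland--Hayman exponent estimate on the spherical supports), and the sharp gap to $3/2$ for homogeneous elements is taken by the paper from \cite[Lemma 4.2]{SoTe}, not from \cite{tt}. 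With that correction your proof is the paper's. A smaller point: to get $H(\mf{v},x_0,r)\neq 0$ you cannot simply integrate the ODE ``backward'' from a radius $r_1$ with $H(r_1)>0$, since monotonicity of $H$ only propagates positivity forward; one needs the bound $N(s)\le N(r_1)$ together with an open--closed argument in $r$, giving $H(r)\ge H(r_1)(r/r_1)^{2N(r_1)}$ on the maximal interval of positivity.
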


\begin{remark}
In \cite[Lemma 4.2]{SoTe} it is shown that the alternative $N(\mf{v},x_0,0^+) =1$, or $N(\mf{v},x_0,0^+) \ge 3/2$, holds for the subclass of $\mathcal{G}_{\loc}(\R^N)$ containing all the homogeneous functions. This is sufficient to have the result in $\mathcal{G}(\Omega)$ for any $\Omega$, and to prove this we argue in the following way: let $\mf{v} \in \mathcal{G}(\Omega)$, not necessarily homogeneous, and let $x_0 \in \{\mf{v}=\mf{0}\}$. Let us consider a normalized blow-up
\[
w_{i,\rho}(x):= \frac{v_i(x_0+\rho x)}{H(\mf{v},x_0,\rho)^{1/2}}.
\]
Up to a subsequence, the family $\{\mf{w}_\rho\}$ is convergent in $\mathcal{C}^{0,\alpha}_{\loc}(\R^N)$ and $H^1_{\loc}(\R^N)$, for $\rho \to 0^+$, to a limit \emph{homoegenous} function $\mf{w} \in \mathcal{G}_{\loc}(\R^N)$ (see Section 3 in \cite{tt}). Thus, for every $r>0$
\begin{align*}
N(\mf{w},0,0^+) &= ( \text{by homogeneity} ) = N(\mf{w},0,r) =  \lim_{\rho \to 0^+} N(\mf{w}_\rho,0, r)\\
&= \lim_{\rho \to 0^+} N(\mf{v},x_0, \rho r) = N(\mf{v},x_0,0^+) ,
\end{align*}
and Lemma 2.7 in \cite{SoTe} applies.
\end{remark}

\begin{remark}\label{rem: homogeneity non-segregated}
It is worth to observe that the characterization ``$N(\mf{v},x_0,r) \equiv \alpha$ is constant for $r \in (r_1,r_2)$ if and only if $\mf{v}=r^\alpha \mf{g}(\theta)$ is homogeneous of degree $\alpha$ in $\{r_1<|x|<r_2\}$" remains true also if $\mf{v}$ is a solution of \eqref{system simplified}. But, since such problem does not admit homogeneous solutions (but constant ones), this means that for any non-constant solution of \eqref{system simplified} the Almgren frequency function is strictly monotone. 
\end{remark}

\subsection{On entire solutions of system \ref{entire system a_{ij}}}\label{sub: entire}

Theorem \ref{thm: blow-up} establishes a relationship between the behaviour of solutions to \eqref{main system k comp} near the interface and the geometry of the solutions to \eqref{entire system a_{ij}}:
\[
\begin{cases}
\Delta V_i = V_i \sum_{j \neq i} a_{ij} V_j^2 \\
V_i \ge 0 
\end{cases} \quad \text{in $\R^N$},
\] 
with $k \ge 2$, $N \ge 1$, and $a_{ij}=a_{ji}$. As stated in the introduction, this relationship will be exploited many times in the rest of the paper, and to this aim we recall some known results concerning existence and classification of solutions to \eqref{entire system a_{ij}}. 


The first trivial observation is that, by the strong maximum principle, the dichotomy $V_i >0$ or $V_i \equiv 0$ in $\R^N$ holds. 

Let us consider now the $k=2$ components system; in such a situation, without loss of generality we can suppose $a_{12}=a_{21}=1$. The $1$-dimensional problem $N =1$ is classified: up to rigid motions and suitable scaling, there exists a unique $1$-dimensional solution satisfying the symmetry property $V_2(x) = V_1(-x)$, the monotonicity condition $V_1'>0$ and $V_2'<0$ in $\R$, and having at most linear growth, see \cite[Lemma 4.1 and Theorem 1.3]{BeLiWeZh} and \cite[Theorem 1.1]{BeTeWaWe}. 

The linear growth is the minimal admissible growth for non-constant solutions of \eqref{entire system a_{ij}}, in the sense that in any dimension $N \ge 1$, if $(V_1,V_2)$ is a nonnegative solution and satisfies the sublinear growth condition
\[
V_1(x)+V_2(x) \le C(1+|x|^\alpha) \qquad \text{in $\R^N$}
\]
for some $\alpha \in (0,1)$ and $C>0$, then one between $V_1$ and $V_2$ is $0$, and the other has to be constant. This \emph{Liouville-type theorem} has been proved by B. Noris et al. in \cite[Propositions 2.6]{NoTaTeVe}.

In contrast to the $1$-dimensional case, already for $N = 2$ the $2$ components system \eqref{entire system a_{ij}} has infinitely many positive solutions with algebraic growth, see \cite{BeTeWaWe}, and also solutions with exponential growth, see \cite{SoZi1}. These existence results was extended also to systems with $k >2$ arbitrary, but only under assumption \eqref{a_ij=1}. Notice that by Theorem \ref{thm: blow-up} solutions with exponential growth cannot be obtained as blow-up limits of sequences $\{\mf{u}_\beta\}$ satisfying \eqref{boundedness} and \eqref{nontrivial}.
We also observe that the existence of solutions in $\R^N$ with $N \ge 3$ which are not obtained by solutions in $\R^2$ has been recently proved in \cite{SoZi2}.

In parallel to the study of the existence, great efforts have been devoted to the research of reasonable conditions which, if satisfied by a solution of \eqref{entire system a_{ij}}, imply the $1$-dimensional symmetry of such solution; this, as explained in \cite{BeLiWeZh}, is inspired by some analogy in the derivation of \eqref{entire system a_{ij}} and of the Allen-Chan equation, for which symmetry results in the spirit of the celebrated De Giorgi's conjecture have been widely investigated. For systems of $k=2$ components, we refer to \cite{Fa}, dealing with monotone solutions in dimension $N=2$; to \cite{FaSo}, where a Gibbons-type conjecture for \eqref{entire system a_{ij}} is proved for any $N \ge 2$; and to \cite{Wa,Wa2}, where it is showed that in any dimension $N \ge 2$, any solution of \eqref{entire system a_{ij}} having linear growth is $1$-dimensional. Writing that $(V_1,V_2)$ has linear growth, we mean that there exists $C>0$ such that
\[
V_1(x)+ V_2(x) \le C (1+|x|) \qquad \forall x \in \R^N.
\]
It is worth to point out that the linear growth condition can be rephrased requiring that $N(\mf{V},0,+\infty) \le  1$, where $N(\mf{V},0,+\infty) = \lim_{r \to +\infty} N(\mf{V},0,r)$ (which exists by monotonicity of the frequency function). 
Other symmetry results for $k=2$ are \cite[Theorem 1.8]{BeLiWeZh} and \cite[Theorem 1.12]{BeTeWaWe}, which are now particular cases of the Wang's results, and the theorems in \cite{Dip}, where stable or monotone solutions with linear growth of more general systems are considered.

Regarding $1$-dimensional symmetry for systems with several components, we refer to \cite[Theorem 1.3]{SoTe}, where for any $k \ge 2$ the authors generalized the main results in \cite{FaSo} and \cite{Wa,Wa2} under assumption \eqref{a_ij=1}. Another important result, which we shall use in the following, is \cite[Corollary 1.9]{SoTe}, where it is proved that if \eqref{a_ij=1} holds and $\mf{V}$ is a non-constant solution to \eqref{entire system a_{ij}}, then
\begin{itemize}
\item either $N(\mf{V},0,+\infty) =1$, and in such case $\mf{V}$ has linear growth, has exactly $2$ nontrivial components, and is $1$-dimensional,
\item or $N(\mf{V},0,+\infty) \ge 3/2$, and hence $\mf{V}$ has not linear growth. In this latter case, adapting Lemma 4.2 in \cite{BeLiWeZh} to systems with several components, it is not difficult to deduce that $\mf{V}$ cannot be $1$-dimensional.
\end{itemize}

To conclude this session, we remark that when $k >2$ but \eqref{a_ij=1} does not hold, it is still possible to recover the classification results in \cite{Wa,Wa2}. Indeed, independently of $a_{ij}$, by \cite[Corollary 1.12]{SoTe} any non-constant solution to \eqref{entire system a_{ij}} having linear growth has exactly $2$-nontrivial components, and hence the results in \cite{Wa,Wa2} are applicable.

\section{Decay estimates I}\label{sec: decay 1}

This section is devoted to the proof of Theorem \ref{thm: global upper estimate} and Proposition \ref{prop: interfaces are good approximation}. Thus, (F1), (F2) and \eqref{boundedness} are in force. We start recalling an important decay estimate which will be frequently used in this paper. 

\begin{lemma}[Lemma 4.4 in \cite{ctv}]\label{lem: decay}
Let $x_0 \in \R^N$ and $r>0$. Let $v \in H^1(B_{2r}(x_0))$ satisfy
\[
\begin{cases}
-\Delta v \le -K v & \text{in $B_{2r}(x_0)$} \\
v \ge 0 & \text{in $B_{2r}(x_0)$} \\
v \le A & \text{on $\pa B_{2r}(x_0)$},
\end{cases}
\]
where $K$ and $A$ are two positive constants. Then for every $\alpha \in (0,1)$ there exists $C_\alpha>0$, not depending on $A$, $K$, $R$ and $x_0$, such that
\[
\sup_{x \in B_r(x_0)} v(x) \le \alpha A  e^{-C_\alpha K^{1/2} r}.
\]
\end{lemma}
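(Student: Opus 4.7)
The plan is to combine the weak maximum principle for the coercive operator $L := -\Delta + K$ with a comparison against a radial, exponentially decaying barrier. First, since $K > 0$, the weak maximum principle applied to $(v-A)_+ \in H^1_0(B_{2r}(x_0))$, together with $Lv \le 0$, immediately yields the pointwise bound
\[
v \le A \qquad \text{in } B_{2r}(x_0).
\]
In particular, for any $y \in B_r(x_0)$ we have $B_r(y) \subset B_{2r}(x_0)$ and $v \le A$ on $\partial B_r(y)$. Passing to a comparison on the smaller ball $B_r(y)$ is now the natural way to reveal the exponential decay away from the boundary.

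On $B_r(y)$ I build the barrier $w(z) := A\,\Phi(|z-y|)$, where $\Phi \in C^2([0,r])$ is the smooth solution of the radial initial-value problem
\[
-\Phi''(\rho) - \frac{N-1}{\rho}\,\Phi'(\rho) + K\,\Phi(\rho) = 0, \qquad \Phi'(0) = 0, \qquad \Phi(r) = 1.
\]
The ODE has a regular singular point at $\rho = 0$ whose bounded solution is, up to normalization, $\rho^{(2-N)/2} I_{(N-2)/2}(\sqrt{K}\,\rho)$, with $I_\nu$ the modified Bessel function of the first kind. This $\Phi$ is smooth across the apex, strictly increasing, and $w$ satisfies $Lw = 0$ in $B_r(y)$ with $w \equiv A$ on $\partial B_r(y)$.

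Comparison applies: $v - w \in H^1(B_r(y))$ obeys $L(v-w) \le 0$ in $B_r(y)$ and $v - w \le 0$ on $\partial B_r(y)$, so the weak maximum principle for $L$ forces $v \le w$ inside, and in particular $v(y) \le A\,\Phi(0)$. The asymptotics $I_\nu(s) = e^s(1+O(1/s))/\sqrt{2\pi s}$ as $s \to \infty$ and $I_\nu(s) = (s/2)^\nu/\Gamma(\nu+1) + O(s^{\nu+2})$ as $s \to 0^+$ then yield a dimensional constant $C_N$ with
\[
\Phi(0) \le C_N\,(1 + \sqrt{K}\,r)^{(N-1)/2}\,e^{-\sqrt{K}\,r}.
\]
Finally, given $\alpha \in (0,1)$ one picks $C_\alpha \in (0,1)$ small enough that $C_N(1+t)^{(N-1)/2}\,e^{(C_\alpha - 1)t} \le \alpha$ for all $t \ge 0$ (this absorbs the polynomial prefactor into a slightly slower exponential); taking $t = \sqrt{K}\,r$ and sup-ing over $y$ gives the claim.

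The main delicate point is the construction and analysis of the radial barrier $\Phi$: verifying that it extends as a smooth, bounded function at the center $\rho = 0$ (so that $w$ is a genuine $H^1$-supersolution on all of $B_r(y)$, not merely on $B_r(y)\setminus\{y\}$), and extracting an exponential rate arbitrarily close to the sharp $e^{-\sqrt{K}r}$ from the Bessel asymptotics with explicit dimensional constants. Once these are in place, the two invocations of the weak maximum principle and the polynomial-into-exponential absorption are routine.
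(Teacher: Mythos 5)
Your comparison construction is sound, and it is worth noting that the paper itself offers no proof of this lemma (it is quoted from [ctv]), so your argument stands or falls on its own. The reduction $v\le A$ in $B_{2r}(x_0)$ via the coercive operator $-\Delta+K$, the radial Bessel barrier $w=A\,\Phi(|\cdot-y|)$ on $B_r(y)$ for $y\in B_r(x_0)$, and the resulting bound $v(y)\le A\,\Phi(0)\le C_N(1+\sqrt{K}\,r)^{(N-1)/2}e^{-\sqrt{K}\,r}A$ are all correct. The genuine gap is the last absorption step: no $C_\alpha\in(0,1)$ can satisfy $C_N(1+t)^{(N-1)/2}e^{(C_\alpha-1)t}\le\alpha$ \emph{for all} $t\ge 0$, because $\Phi(0)\to 1$ as $t=\sqrt{K}\,r\to 0$ forces $C_N\ge 1$, so at $t=0$ the left-hand side is at least $1>\alpha$. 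This is not a defect of your barrier but of the statement read literally: taking $v=A\,\Psi(|x-x_0|)$ with $\Psi$ the exact radial solution of $\Delta\Psi=K\Psi$ in $B_{2r}(x_0)$, $\Psi(2r)=1$, one has $\sup_{B_r(x_0)}v=A\,\Psi(r)\to A$ as $Kr^2\to 0$, so the claimed bound $\alpha A e^{-C_\alpha\sqrt{K}r}\le\alpha A$ cannot hold uniformly in $K,r$. Hence the conclusion is only available under an implicit lower bound on $\sqrt{K}\,r$ (the regime in which the lemma is invoked throughout the paper, where $K\sim\beta\to+\infty$ at fixed radius), and your final step must be stated accordingly.

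Concretely, you can repair the write-up in one of two ways: either keep your clean intermediate estimate $\sup_{B_r(x_0)}v\le C_N(1+\sqrt{K}\,r)^{(N-1)/2}e^{-\sqrt{K}\,r}A$ as the conclusion (this is what every application in the paper actually needs), or fix, say, $C_\alpha=\tfrac12$ and add the proviso $\sqrt{K}\,r\ge t_0(\alpha,N)$ with $t_0$ chosen so that $C_N(1+t)^{(N-1)/2}e^{-t/2}\le\alpha$ for $t\ge t_0$; then the chain $v(y)\le A\,\Phi(0)\le \alpha A e^{-\sqrt{K}r/2}$ is valid and uniform in $A$, $K$, $r$, $x_0$. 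Without one of these modifications the proof, as written, asserts an impossible choice of $C_\alpha$, and the statement it targets is false in the small-$\sqrt{K}r$ regime.
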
 

This result, together with the uniform boundedness in the Lipschitz norm of $\{\mf{u}_\beta\}$ (proved in \cite{SoZi}), is the main ingredient in the proof of Theorem \ref{thm: global upper estimate}.

\begin{proof}[Proof of Theorem \ref{thm: global upper estimate}]
For an arbitrary compact $K \Subset \Omega$, let $K'$ be such that $K \Subset K' \Subset \Omega$. By contradiction, we assume that there exist sequences $\beta_n \to +\infty$ and $x_n \in K$ such that 
\[
\beta_n^{1/2} u_{i,n}(x_n) u_{j,n}(x_n) \to +\infty \qquad \text{as $n \to \infty$ for some $i,j =1,\dots,k$},
\]
where $\mf{u}_n= \mf{u}_{\beta_n}$. By compactness, up to a subsequence $x_n \to x_0 \in K$. Moreover, without loss of generality, we can suppose that $i=1$, $j=2$, and 
\begin{equation}\label{ordering}
u_{1,n}(x_n) , u_{2,n}(x_n) \ge u_{h,n}(x_n) \qquad \forall h \neq 1,2.
\end{equation}  
\textbf{Step 1)} \emph{For every $i$, the sequence $(u_{i,n}(x_n))$ converges to $0$ as $n \to \infty$.} \\
As already observed in the introduction, by (F1), (F2) and \eqref{boundedness} we know that $\mf{u}_n \to \mf{u}$ in $\mathcal{C}^0(K')$ and $H^1(K')$. If for instance $u_{1,n}(x_n) \ge 3\delta > 0$ for every $n$, then $u_1(x_0) \ge 2\delta$, and in turn, by the uniform Lipschitz boundedness \cite{SoZi}, $u_{1,n} \ge \delta$ in a neighbourhood $B_{2\rho}(x_0)$. In particular, this implies by (F2) that for any $j \neq 1$
\begin{equation}\label{elliptic inequality}
-\Delta u_{j,n} = - \beta_n a_{ij} u_{i,n}^2 u_{j,n} + f_{j,\beta_n}(x,u_{j,n})  \le (C- C \beta_n) u_{j,n} \le -C \beta_n u_{j,n} 
\end{equation}
in $B_{2\rho}(x_0)$. Since $u_{j,n}$ is also positive and bounded in $L^\infty(B_{2\rho}(x_0))$, uniformly  in $n$, we deduce by Lemma \ref{lem: decay} that
\[
\sup_{B_{\rho}(x_0)} u_{j,n} \le C e^{-C \beta_n^{1/2} \rho} \qquad \forall j=2,\dots,k, \ \forall n.
\]
It follows that
\[
\beta_n^{1/2} u_{1,n}(x_n) u_{2,n}(x_n) \le C \beta_n^{1/2} e^{-C \beta_n^{1/2} \rho} 
\]
for every $n$ sufficiently large, in contradiction with the unboundedness of the left-hand side. 

\medskip

\noindent \textbf{Step 2)} \emph{Conclusion of the proof.} \\
Let $\eps_n:= \sum_{i=1}^k u_{i,n}(x_n)$. By Step 1, $\eps_n \to 0$ as $n \to \infty$. Let 
\[
\tilde u_{i,n}(x):= \frac{1}{\eps_n} u_{i,n}(x_n+\eps_n x) \qquad i=1,\dots,k,
\]
well defined on scaled domains $K_n':= (K'-x_n)/\eps_n$ exhausting $\R^N$ as $n \to \infty$ (here we used the fact that $K \Subset K'$). Note that the normalization has been chosen in such a way that $\sum_i \tilde u_{i,n}(0)= 1$, and the sequence $\{\tilde{\mf{u}}_n\}$ inherits by $\{\mf{u}_n\}$ the uniform boundedness of the Lipschitz semi-norm. As a consequence, $\{\tilde{\mf{u}}_n\}$ is uniformly bounded on compact sets. Now, 
\[
-\Delta \tilde u_{i,n} =  - \eps_n^4\beta_n \tilde u_{i,n} \tilde u_{j,n}^2 -\eps_n f_{i,\beta_n}(x_n +\eps_n x, \eps_n \tilde u_{i,n}(x))  \qquad \text{in $K_n'$},
\]
and the new competition parameter diverges: indeed by assumption
\[
\eps_n^4 \beta_n = \left(\sum_i u_{i,n}(x_n) \right)^4 \beta_n \ge 6 \beta_n u_{1,n}^2(x_n) u_{2,n}^2(x_n) \to +\infty.
\]
Therefore, by \cite{SoTaTeZi} (see also \cite{NoTaTeVe,Wa}) we infer that up to a subsequence $\tilde{\mf{u}}_n \to \tilde{\mf{u}}$ locally uniformly, in $H^1_{\loc}(\R^N)$, and $\tilde u_i \tilde u_j \equiv 0$ in $\R^N$ for every $j \neq i$. Together with the considered normalization, this implies that for instance $\tilde u_1(0) =1$, while $\tilde u_j(0)=0$ for all the other indices $j$. Recalling again the uniform Lipschitz boundedness of $\{\tilde{\mf{u}}_n\}$ on compact sets, for every $n$ sufficiently large we have $\tilde u_{1,n} \ge 1/2$ in a neighbourhood $B_{2\rho}$. By (F1), we finally conclude
\[
-\Delta \tilde u_{j,n} \le - C \eps_n^4 \beta_n \tilde u_{j,n} + C \eps_n^2 \tilde u_{j,n} \le - C \eps_n^4 \beta_n \tilde u_{j,n} \qquad \forall j \neq 1, \ \forall n
\]
in the ball $B_{2\rho}$. Thanks to Lemma \ref{lem: decay}, this implies that
\[
\beta_n u_{1,n}^2 (x_n) u_{2,n}^2(x_n) = \beta_n \eps_n^4 \tilde u_{1,n}^2(0) \tilde u_{2,n}^2(0) \le C \beta_n \eps_n^4 e^{-C \beta_n^{1/2} \eps_n^{2} \rho} \to 0
\]
as $n \to \infty$, a contradiction.
\end{proof}

\begin{remark}
We wish to observe that the uniform Lipschitz boundedness of the sequence $\{\mf{u}_n\}$ is essential in our proof in order to deduce that $\{\tilde{\mf{u}}_n\}$ is locally $L^\infty$-bounded on compact sets of $\R^N$. Notice that the uniform H\"older boundedness would not be sufficient.
\end{remark}
Now we proceed with the:

\begin{proof}[Proof of Proposition \ref{prop: interfaces are good approximation}] If $x_\beta \in \Gamma_\beta$ and $x_\beta \to x_0$, then clearly $x_0 \in \Gamma$ by local uniform convergence $\mf{u}_\beta \to \mf{u} \in \mathcal{G}(\Omega)$. Let now $x_0 \in \Sigma$ with $\mf{u} \not \equiv \mf{0}$, and let us show that there exists $x_\beta \in \Gamma_\beta$ such that $x_\beta \to x_0$. If this is not the case, then $\dist(x_0, \Gamma_\beta) \ge \delta>0$ independently on $\beta$. But then there exists an index $i$ such that (up to a subsequence) 
\[
x_0 \in \{u_{i,\beta}> u_{j,\beta} \text{ for every $j \neq i$}\} \qquad \forall \beta \quad \Longrightarrow \quad B_{\delta/2}(x_0) \subset \{u_i>0\} \cup \Gamma.
\]
It cannot be $B_{\delta/2} (x_0) \cap \mathcal{R} \neq \emptyset$, otherwise we would have self-segregation around the regular part of the free-boundary, in contradiction with \cite[Section 10]{DaWaZh}. This means that $B_{\delta/2}(x_0) \cap \Gamma = B_{\delta/2}(x_0) \cap \Sigma$, being the Hausdorff dimension of $\Sigma$ at most $N-2$ (see \cite{tt} and the previous section). In turn, we deduce that $-\Delta u_i=f_i(x,u_i)$ and $u_i>0$ in $B_{\delta/2}(x_0) \setminus \Sigma$, which implies $-\Delta u_i=f_i(x,u_i)$ in $B_{\delta/2}(x_0)$ since $\Sigma$ has $0$ capacity, and in turn gives $u_i(x_0)>0$ by the strong maximum principle, a contradiction.
\end{proof}

\section{Blow-up and decay estimates II}\label{sec: decay 2}

In the first part of this section we prove Theorem \ref{thm: blow-up}. This, together with Corollaries \ref{thm: classification limits regular part} and \ref{thm: non-simple blow-up}, will be the base point to obtain Theorems-\ref{cor: improved decay singular sequence}. Theorem \ref{thm: decay higher multiplicity} will be the object of the last part of the section. 
%
%

As announced at the end of the introduction, for the sake of simplicity we deal with a sequence of solutions to \eqref{system simplified},
\[
\begin{cases}
\Delta u_{i,\beta} = \beta u_{i,\beta} \sum_{j \neq i} a_{ij} u_{j,\beta}^2 & \text{in $\Omega$} \\
u_{i,\beta} > 0 & \text{in $\Omega$},
\end{cases}
\]
satisfying \eqref{boundedness} and \eqref{nontrivial}: $\{\mf{u}_\beta\}$ is uniformly bounded in $L^\infty(\Omega)$, and is convergent to a nontrivial limit profile $\mf{u} \in \mathcal{G}(\Omega)$. Let $K \Subset \Omega$ be a compact set; then there exists $\bar r>0$ such that $B_{2\bar r}(x) \Subset \Omega$ for every $x \in K$.
Firstly, we derive a simple consequence of assumption \eqref{nontrivial}.

\begin{lemma}\label{lem: N bounded}
There exists $\bar C>0$ such that
\[
E(\mf{u}_\beta,x_0,r) , N(\mf{u}_\beta,x_0,r) \le \bar C
\]
for every $x_0 \in K$, $r \in (0,\bar r]$, and for every $\beta$.
\end{lemma}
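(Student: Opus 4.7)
The plan is to exploit the monotonicity of $r \mapsto N(\mathbf{u}_\beta, x_0, r)$ from Proposition \ref{prop: almgren} in order to reduce the task to bounding $N$ at the single scale $\bar r$. Indeed, once I have $N(\mathbf{u}_\beta, x_0, \bar r) \leq C$ uniformly in $x_0 \in K$ and $\beta$, monotonicity propagates the same bound to all $r \leq \bar r$; the bound on $E$ then follows from $E = N\cdot H$ together with the elementary inequalities
\[
H(\mathbf{u}_\beta, x_0, r) \leq H(\mathbf{u}_\beta, x_0, \bar r) \leq C\, \|\mathbf{u}_\beta\|_{L^\infty(B_{\bar r}(x_0))}^2,
\]
the first being the monotonicity of $H$ from \eqref{der H} (valid since $N \geq 0$), the second trivial in view of \eqref{boundedness}. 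The task therefore reduces to an upper bound on $E(\mathbf{u}_\beta, x_0, \bar r)$ and a positive lower bound on $H(\mathbf{u}_\beta, x_0, \bar r)$, both uniform in $x_0 \in K$ and $\beta$.

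For the upper bound on $E$, I would test the equation $\Delta u_{i,\beta} = \beta u_{i,\beta} \sum_{j \neq i} a_{ij} u_{j,\beta}^2$ against $u_{i,\beta}$ itself and sum over $i$ to obtain
\[
\int_{B_\rho(x_0)} \Bigl( \sum_i |\nabla u_{i,\beta}|^2 + 2\beta \sum_{i < j} a_{ij} u_{i,\beta}^2 u_{j,\beta}^2 \Bigr) = \int_{\partial B_\rho(x_0)} \sum_i u_{i,\beta}\, \partial_\nu u_{i,\beta}
\]
for every $\rho \in (0, 2\bar r]$. Controlling the boundary term at a single scale is delicate because it would require pointwise trace information on $\partial B_\rho$ that is not uniformly available in $\beta$. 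To bypass this, I would integrate the identity in $\rho$ over $[\bar r, 2\bar r]$: the right-hand side becomes a volume integral over the annulus $B_{2\bar r}(x_0) \setminus B_{\bar r}(x_0)$, which is bounded by $C\, \|\mathbf{u}_\beta\|_{L^\infty(B_{2\bar r}(x_0))} \|\nabla \mathbf{u}_\beta\|_{L^2(B_{2\bar r}(x_0))}$, and both factors are uniformly controlled by \eqref{boundedness} and the uniform Lipschitz estimate from \cite{SoZi}. Monotonicity in $\rho$ of the integrand on the left-hand side then yields a uniform upper bound on the full (gradient plus interaction) energy inside $B_{\bar r}(x_0)$, and hence on $E(\mathbf{u}_\beta, x_0, \bar r)$.

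For the lower bound on $H(\mathbf{u}_\beta, x_0, \bar r)$, I would invoke assumption \eqref{nontrivial} together with the limit inclusion $\mathbf{u} \in \mathcal{G}(\Omega)$: Proposition \ref{prop: monot segregated} guarantees $H(\mathbf{u}, x_0, \bar r) > 0$ for every $x_0 \in K$. The continuous dependence of $x_0 \mapsto H(\mathbf{u}, x_0, \bar r)$ and compactness of $K$ upgrade this to a uniform lower bound $H(\mathbf{u}, x_0, \bar r) \geq c > 0$ on $K$; the uniform convergence $\mathbf{u}_\beta \to \mathbf{u}$ on a neighbourhood of $K$ then transfers it to $H(\mathbf{u}_\beta, x_0, \bar r) \geq c/2$ for all $\beta$ sufficiently large (the finitely many remaining values of $\beta$, if any, being handled by enlarging $\bar C$). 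The main subtlety I anticipate is the averaging-in-$\rho$ trick of the second paragraph, which is what turns the Pohozaev-type identity into a uniformly usable estimate. Assumption \eqref{nontrivial} is essential and unavoidable here: without it one could have $\mathbf{u} \equiv \mathbf{0}$, in which case $H$ could collapse along the sequence and $N = E/H$ would cease to be meaningful in the limit.
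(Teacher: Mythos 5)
Your proof is correct, and its skeleton coincides with the paper's: bound $E(\mf{u}_\beta,x_0,\bar r)$ from above, bound $H(\mf{u}_\beta,x_0,\bar r)$ from below using \eqref{nontrivial}, the convergence $\mf{u}_\beta\to\mf{u}\in\mathcal{G}(\Omega)$ and Proposition \ref{prop: monot segregated} (your direct continuity-plus-compactness phrasing and the paper's argument by contradiction along a sequence $x_\beta\to x_0$ are interchangeable), and then propagate to all $r\in(0,\bar r]$ by monotonicity. The differences are in the two technical steps. For the energy bound the paper follows point (6) of Lemma 2.1 in \cite{SoZi}: it first tests the equation with a cutoff $\varphi_{x_0}$ to bound $\int_{B_{\bar r}(x_0)}\beta u_{i,\beta}\sum_{j\neq i}a_{ij}u_{j,\beta}^2$, and then with $u_{i,\beta}\varphi_{x_0}^2$ for the gradient part; this uses only the $L^\infty$ bound \eqref{boundedness}. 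You instead test with $u_{i,\beta}$ on balls and average the resulting identity over $\rho\in[\bar r,2\bar r]$, which is valid, but since you invoke the uniform Lipschitz bound of \cite{SoZi} to control the right-hand side, the averaging is actually superfluous: with pointwise gradient bounds on the compact neighbourhood of $K$ the flux $\int_{\partial B_{\bar r}(x_0)}\sum_i u_{i,\beta}\partial_\nu u_{i,\beta}$ is already bounded at the single scale $\bar r$, so your worry about traces is unfounded; conversely, the paper's cutoff argument is slightly more economical in that it does not need the Lipschitz estimate at all (which also makes it adapt more directly to $f_{i,\beta}\not\equiv 0$). For the propagation to smaller radii, the paper asserts and uses the monotonicity of $r\mapsto E(\mf{u}_\beta,x_0,r)$, which is not recorded in Proposition \ref{prop: almgren}, whereas you deduce the bound on $E$ at small radii from $E=N\cdot H$ together with the monotonicity of $N$ and of $H$ (the latter from \eqref{der H}); this is a minor but genuinely more self-contained route relative to the stated preliminaries. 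Your remark on the finitely many remaining values of $\beta$ is at the same level of rigour as the paper, which also implicitly treats $\beta$ along a sequence diverging to $+\infty$.
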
 
\begin{proof}
Since $E$ is monotone non-decreasing as function of $r$, for the first part of the thesis it is sufficient to bound $E(\mf{u}_\beta,x_0,\bar r)$ uniformly in $x_0$ and $\beta$. 
This can be done as in point (6) of Lemma 2.1 in \cite{SoZi} (in the present setting it is actually easier), and we only sketch the proof for the sake of completeness. Let $\varphi \in \mathcal{C}^\infty_c(B_{2\bar r})$, with $\varphi \equiv 1$ in $B_{\bar r}$ and $0 \le \varphi \le 1$. Let $\varphi_{x_0}(y):= \varphi(x-x_0)$. By testing the equation in \eqref{system simplified} with $\varphi_{x_0}$, we can show that
\[
\int_{B_{\bar r}(x_0)} \beta u_{i,\beta} \sum_{j \neq i} a_{ij} u_{j,\beta}^2 \le C
\]
for some $C>0$ independent of $x_0$ and $\beta$. This, together with assumption \eqref{boundedness}, gives the boundedness of 
\[
\int_{B_{\bar r}(x_0)}\sum_{j \neq i} a_{ij} u_{i,\beta}^2 u_{j,\beta}^2.
\]
To control the integrals of the square of the gradient, we test the equation in \eqref{system simplified} with $u_{i,\beta} \varphi_{x_0}^2$, and obtain the desired estimate after some integration by parts.

Concerning the boundedness of the Almgren quotient, by monotonicity again (Proposition \ref{prop: almgren}) it is sufficient to check that $N(\mf{u}_\beta,x_0,\bar r)$ is bounded uniformly in $x_0$ and $\beta$. Thanks to the first part, it is equivalent to prove that $H(\mf{u}_\beta,x_0,\bar r) \ge C >0$ independently of $x_0$ and $\beta$. If this is not true, then there exist sequences $\beta \to +\infty$ and $x_\beta \in K$ such that $H(\mf{u}_\beta,x_\beta,\bar r) \to 0$. On the other hand, observing that $x_\beta \to x_0 \in K$ since $K$ is compact, by uniform convergence we have $H(\mf{u}_\beta,x_\beta,\bar r) \to H(\mf{u},x_0,\bar r)$. This is a strictly positive quantity, as ensured by Proposition \ref{prop: monot segregated} and assumption \eqref{nontrivial}, and hence we reached the desired contradiction. 
%
\end{proof}

The following statement suggests the proper choice of $r_\beta$ in Theorem \ref{thm: blow-up}.

\begin{lemma}\label{lem: choice of r}
For any $x_0 \in K$ and $\beta > 0$ sufficiently large, there exists a unique $r_\beta(x_0)> 0$ such that
\[
	\beta H(\mf{u}_\beta, x_0, r_\beta(x_0)) r_\beta(x_0)^2 = 1.
\]
Moreover, let $\{x_\beta\} \subset K$. Then $r_\beta(x_\beta) \to 0$, and consequently
\[
	\frac{\Omega - x_\beta}{r_\beta(x_\beta)} \to \R^N \qquad \text{as $\beta \to +\infty$},
\]
in the sense that for any $R>0$ there exists $\bar \beta$ sufficiently large such that $B_R \subset (\Omega - x_\beta)/r_\beta(x_\beta)$ provided $\beta > \bar \beta$.
\end{lemma}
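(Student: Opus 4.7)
The plan is to set $g_\beta(r) := \beta H(\mathbf{u}_\beta, x_0, r) r^2$ and study it on $(0, \bar r]$, where $\bar r$ is the radius furnished by Lemma \ref{lem: N bounded}. Taking the logarithmic derivative and applying the monotonicity formula of Proposition \ref{prop: almgren} yields
\[
\frac{d}{dr}\log g_\beta(r) = \frac{2 N(\mathbf{u}_\beta, x_0, r)}{r} + \frac{2}{r} = \frac{2\bigl(N(\mathbf{u}_\beta, x_0, r) + 1\bigr)}{r} > 0,
\]
since $N(\mathbf{u}_\beta, x_0, r) \ge 0$ (both $E$ and $H$ are non-negative). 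Hence $g_\beta$ is strictly increasing, and any solution of $g_\beta(r) = 1$ in $(0, \bar r]$ is unique.

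For existence, I would check the two endpoints. Smoothness of $\mathbf{u}_\beta$ gives $H(\mathbf{u}_\beta, x_0, r) \to |\mathbf{u}_\beta(x_0)|^2$ as $r \to 0^+$, so $g_\beta(r) \to 0$. At $r = \bar r$, the argument already embedded in the proof of Lemma \ref{lem: N bounded} produces
\[
H(\mathbf{u}_\beta, x_0, \bar r) \ge c_0 > 0
\]
uniformly in $x_0 \in K$ and for $\beta$ large: this follows from the local uniform convergence $\mathbf{u}_\beta \to \mathbf{u}$, the compactness of $K$, and the strict positivity of $H(\mathbf{u}, \cdot, \bar r)$ on $K$ guaranteed by Proposition \ref{prop: monot segregated} and assumption \eqref{nontrivial}. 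Consequently $g_\beta(\bar r) \ge c_0 \beta \bar r^2 \to +\infty$, and combining continuity, monotonicity, and the intermediate value theorem gives a unique $r_\beta(x_0) \in (0, \bar r)$ for every $x_0 \in K$ and $\beta$ large enough.

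To prove $r_\beta(x_\beta) \to 0$ for an arbitrary sequence $\{x_\beta\} \subset K$, I argue by contradiction. If $r_\beta(x_\beta) \ge \delta > 0$ along a subsequence, strict monotonicity of $g_\beta$ forces
\[
\beta H(\mathbf{u}_\beta, x_\beta, \delta) \delta^2 \le g_\beta(r_\beta(x_\beta)) = 1,
\]
so $H(\mathbf{u}_\beta, x_\beta, \delta) \to 0$. Extracting $x_\beta \to x_0 \in K$ by compactness and using local uniform convergence, this would give $H(\mathbf{u}, x_0, \delta) = 0$, in contradiction with Proposition \ref{prop: monot segregated} (applicable thanks to \eqref{nontrivial}).

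The exhaustion of $\R^N$ is then immediate: since $K \Subset \Omega$, there exists $\eta > 0$ with $B_\eta(x) \subset \Omega$ for every $x \in K$, hence $(\Omega - x_\beta)/r_\beta(x_\beta) \supset B_{\eta/r_\beta(x_\beta)}(0)$, and the radius diverges by the previous step. The only mildly delicate point is the uniform lower bound $H(\mathbf{u}_\beta, x_0, \bar r) \ge c_0$, but this is essentially the same contradiction argument already carried out in the proof of Lemma \ref{lem: N bounded}; no new ideas are required.
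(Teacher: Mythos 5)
Your proposal is correct and follows essentially the same route as the paper: monotonicity of $r \mapsto \beta H(\mathbf{u}_\beta,x_0,r)r^2$ from Proposition \ref{prop: almgren}, the limits $0$ as $r \to 0^+$ and $>1$ at $r=\bar r$ (via convergence of $H$ and \eqref{nontrivial}) plus the intermediate value theorem, and a compactness/uniform-convergence contradiction for $r_\beta(x_\beta) \to 0$. The only cosmetic difference is that you push the contradiction through $H(\mathbf{u}_\beta,x_\beta,\delta) \to 0$ while the paper bounds $\beta H(\mathbf{u}_\beta,x_\beta,r_\beta)r_\beta^2$ from below directly; both are the same argument.
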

\begin{proof}
First of all, by Proposition \ref{prop: almgren} 
\[
	r \mapsto \beta H(\mf{u}_\beta,x_0,r) r^2
\]
is increasing for any $x_0 \in K$ and $\beta$ fixed. Since $H(\mf{u}_\beta,x_0,\bar r) \to H(\mf{u},x_0,\bar r)$ and assumption \eqref{nontrivial} is in force, we have
\[
\beta H(\mf{u}_\beta,x_0,\bar r) \bar r^2 >1 \qquad \forall \beta>\bar \beta.
\] 
Moreover, since $\mf{u}_\beta$ is a vector valued smooth function with positive components, it results
\[
	\lim_{r \to 0^+}  \beta H(\mf{u}_\beta,x_0,r) r^2 = 0,
\]
and hence the thesis follows by the mean value theorem. 
For the second part of the lemma we argue by contradiction, assuming that for a sequence $x_\beta \in K$ it results $r_\beta(x_\beta) \ge \tilde r>0$. By compactness $x_\beta \to x_0 \in K$, and thanks to \eqref{nontrivial} and uniform convergence 
\[
1 = \beta H(\mf{u}_\beta,x_\beta,r_\beta(x_\beta)) r_\beta(x_\beta)^2 \ge \frac{\beta}{2} H(\mf{u},x_0,\tilde r) \tilde r^2 \to +\infty
\]
as $\beta \to +\infty$, a contradiction.
\end{proof}


With the previous lemmas in hand we can proceed with the proof of Theorem \ref{thm: blow-up}. Before, we observe that by definition
\begin{equation}\label{scaled quantities}
\begin{split}
H(\mf{v}_\beta,0,r) & = \frac{H(\mf{u}_\beta,x_\beta,r_\beta r)}{H(\mf{u}_\beta,x_\beta,r_\beta)} 		\\
E(\mf{v}_\beta,0,r) & = \frac{E(\mf{u}_\beta,x_\beta,r_\beta r)}{H(\mf{u}_\beta,x_\beta,r_\beta)}  \\
N(\mf{v}_\beta,0,r) & = N(\mf{u}_\beta,x_\beta,r_\beta,r).
\end{split}
\end{equation}

\begin{proof}[Proof of Theorem \ref{thm: blow-up}]
Let us consider the scaled sequence $\mf{v}_\beta$:
\[
v_{i,\beta}(x):= \frac{u_{i,\beta}(x_\beta + r_\beta x)}{H(\mf{u}_\beta,x_\beta,r_\beta)^{1/2}}
\]
where $r_\beta=r_\beta(x_\beta)$ is given by Lemma \ref{lem: choice of r}, and we recall that $x_\beta$ is a sequence of points on the interfaces $\Gamma_\beta$.
Thanks to the choice of $r_\beta$
\begin{equation}\label{scaled equation}
\Delta v_{i,\beta}(x) = v_{i,\beta} \sum_{j \neq i} a_{ij} v_{j,\beta}^2  \qquad \text{in } \frac{\Omega-x_\beta}{r_\beta},
\end{equation}
and moreover by \eqref{scaled quantities} and Lemma \ref{lem: N bounded} 
\[
N(\mf{v}_\beta,0,r) \le \bar C \qquad \forall r \le \frac{\bar r}{r_\beta},
\]
where we recall that $\bar r>0$ has been chosen so that $B_{2\bar r}(x) \Subset \Omega$ for every $x \in K$.
The previous estimate, together with Proposition \ref{prop: almgren}, implies that 
\[
\frac{d}{dr}\log H(\mf{v}_\beta,0,r) = \frac{2 N(\mf{v}_\beta,0,r)}{r} \le \frac{2\bar C}{r} \qquad \forall r \le \frac{\bar r}{r_\beta};
\]
by integrating
\[
H(\mf{v}_\beta,0,r) \le H(\mf{v}_\beta,0,1) r^{2 \bar C} \qquad \forall r \in \left[1,\frac{\bar r}{r_\beta}\right].
\]
Consequentlym for any fixed $r>1$, the sequence $\{H(\mf{v}_\beta,0,r)\}_\beta$ is bounded, and since $\{N(\mf{v}_\beta,0,r)\}_\beta$ is also bounded by Lemma \ref{lem: N bounded}, we infer that $\{E(\mf{v}_\beta,0,r)\}_\beta$ is in turn bounded. Using a Poincar\'e inequality, it is not difficult to deduce that this gives boundedness of $\{\mf{v}_\beta\}$ in $H^1(B_r)$, and hence also in $L^2(\partial B_r)$. By subharmonicity, $\{\mf{v}_\beta\}$ is then $L^\infty$-bounded in any compact set of $B_r$, and, by regularity theory for elliptic equations, this provides $\mathcal{C}^2_{\loc}(B_r)$ convergence to a limit $\mf{V}^{(r)}$, solution to \eqref{entire system a_{ij}} in $B_r$. Since in the previous argument $r>1$ has been arbitrarily chosen, we can take a sequence of radii diverging to $+\infty$, and with a diagonal selection we obtain $\mathcal{C}^2_{\loc}$ convergence to an entire limit profile $\mf{V}$. Notice that $\mf{V}$ has at least two nontrivial components. Indeed, by definition of $\Gamma_\beta$, we known that $u_{i_1,\beta}(x_\beta)= u_{i_2,\beta}(x_\beta) \ge u_{j,\beta}(x_\beta)$ for at least two indices $i_1 \neq i_2$, for all $j$. This implies that $v_{i_1,\beta}(0) = v_{i_2,\beta}(0) \ge v_{j,\beta}(0)$. Now, it is easy to check that $v_{i_1}(0) \ge C >0$: if not, then $V_j(0) = 0$ for all $j$, and since $V_j$ is nonnegative and solves
\[
	\Delta V_j = V_j \sum_{i \neq j} a_{ij} V_i^2 \qquad \text{in $\R^N$}
\]
by the strong maximum principle $V_{j} \equiv 0$ in $\R^N$ for all $j$, in contradiction with the fact that 
\[
	\int_{\pa B_1} \sum_{i=1}^k v_{i,\beta}^2 = 1. \qedhere
\]
\end{proof}

\begin{remark}\label{rem: conv also not on gamma}
If $x_\beta$ is not necessarily a sequence in $\Gamma_\beta$, the previous proof establishes that the scaled sequence $\mf{v}_\beta$ is convergent to a limit $\mf{V} \not \equiv \mf{0}$. It is worth to point out that in case $x_\beta \not \in \Gamma_\beta$ for $\beta$ large, such convergence is not really informative, since the limit function will have only $1$ nontrivial components, being a constant, and all the others will be $0$.
\end{remark}

\begin{remark}\label{rem: con f 1}
It is worth to observe explicitly that the limit system does not change in presence of nontrivial $f_{i,\beta}(x,u_{i,\beta})$. Indeed, the transformed nonlinearities appearing in \eqref{scaled equation} takes the form
\[
\frac{r_\beta^2}{H(\mf{u}_\beta,x_\beta,r_\beta)}   f_{i,\beta}\left( x_\beta + r_\beta x, u_{i,\beta}(x_\beta+r_\beta x) \right),
\]
and by (F1) can be easily controlled by 
\[
\frac{r_\beta^2 u_{i,\beta}(x_\beta+r_\beta x)   }{H(\mf{u}_\beta,x_\beta,r_\beta)} = r_\beta^2 v_{i,\beta}(x).
\]
Therefore, once the local $L^\infty$ boundedness of $\{\mf{v}_\beta\}$ is proved (instead of the subharmonicity, one can use a Brezis-Kato argument), the transformed nonlinearities converge to $0$ locally uniformly since $r_\beta \to 0$.

In the same spirit, we observe that if $f_{i,\beta} \not \equiv 0$ and (F1) holds, then both $H(\mf{u}_\beta,x_\beta,\cdot)$ and $N(\mf{u}_\beta,x_\beta,\cdot)$ are not necessarily monotone, but satisfy some almost monotonicity formulae, see Proposition 3.5 in \cite{SoZi}. Using such a result and refining a little bit the previous computations, it is not difficult to check that what we proved in this subsection hold also in that context, as stated in the introduction.
\end{remark}

\subsection{Lower estimates on the decay}

We aim at proving Theorem \ref{thm: lower general}. As a first step, we relate the value of $\mf{u}_\beta$ on the interface with $H(\mf{u}_\beta,x_\beta,r_\beta)$. 

\begin{lemma}\label{lem: H con m}
Let $\{x_\beta\} \subset K$, and let $r_\beta=r_\beta(x_\beta)$ be defined by Lemma \ref{lem: choice of r}. There exists $C>1$ such that
\[
\frac{1}{C} \left(\sum_{i=1}^k u_{i,\beta}(x) \right)^2 \le H(\mf{u}_\beta,x_\beta,r_\beta) \le C \left(\sum_{i=1}^k u_{i,\beta}(x) \right)^2.
\] 
\end{lemma}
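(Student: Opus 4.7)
The plan is to reduce both inequalities to estimates on the blow-up $\mf{v}_\beta$ of Theorem \ref{thm: blow-up}. By the very definition of $\mf{v}_\beta$ one has the identity
\[
\sum_{i=1}^{k} u_{i,\beta}(x_\beta) \,=\, H(\mf{u}_\beta, x_\beta, r_\beta)^{1/2} \sum_{i=1}^{k} v_{i,\beta}(0),
\]
so I need only show that $\sum_i v_{i,\beta}(0)$ is bounded above and below away from $0$, uniformly in $\beta$ and in $x_\beta \in K$. The choice of $r_\beta$ in Lemma \ref{lem: choice of r} together with the scaling identities \eqref{scaled quantities} supplies the crucial normalization $H(\mf{v}_\beta,0,1)=1$, equivalently $\int_{\pa B_1} \tsum_i v_{i,\beta}^2 = 1$, and I will use this throughout.

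For the upper bound on $\sum_i v_{i,\beta}(0)$ I would exploit that each $v_{i,\beta}$ is non-negative and subharmonic, since $\Delta v_{i,\beta} = v_{i,\beta} \sum_{j \ne i} a_{ij} v_{j,\beta}^2 \ge 0$, and that $B_1$ is contained in the scaled domain $(\Omega-x_\beta)/r_\beta$ for $\beta$ large (by Lemma \ref{lem: choice of r}). The sub-mean value inequality on $\pa B_1$, combined with Cauchy--Schwarz, then gives
\[
v_{i,\beta}(0)^{2} \,\le\, \Bigl(\frac{1}{|\pa B_1|}\int_{\pa B_1} v_{i,\beta}\Bigr)^{2} \,\le\, \frac{1}{|\pa B_1|}\int_{\pa B_1} v_{i,\beta}^2 \,\le\, \frac{1}{|\pa B_1|},
\]
and summing over $i$ produces the required upper bound with a constant depending only on $k$ and $N$.

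For the lower bound I would argue by contradiction: if it failed, there would be sequences $\beta_n \to +\infty$ and $x_{\beta_n} \in K$ along which $\sum_i v_{i,\beta_n}(0) \to 0$, so that $v_{i,\beta_n}(0) \to 0$ for every $i$. Theorem \ref{thm: blow-up} (together with Remark \ref{rem: conv also not on gamma} if $x_{\beta_n} \notin \Gamma_{\beta_n}$) provides, up to subsequences, $\mathcal{C}^2_{\loc}(\R^N)$ convergence of $\mf{v}_{\beta_n}$ to a solution $\mf{V}$ of \eqref{entire system a_{ij}}. Passing the normalization to the limit gives $\int_{\pa B_1} \tsum_i V_i^2 = 1$, hence $\mf{V} \not\equiv \mf{0}$; but $V_i(0)=0$ for every $i$, and each $V_i \ge 0$ satisfies
\[
-\Delta V_i + \Bigl( \sum_{j \ne i} a_{ij} V_j^2 \Bigr) V_i \,=\, 0 \qquad \text{in $\R^N$},
\]
with a non-negative potential. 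The strong maximum principle then forces $V_i \equiv 0$ for every $i$, contradicting the normalization of $\mf{V}$.

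I do not expect any serious obstacle: all the ingredients --- subharmonicity of the components, the normalization $H(\mf{v}_\beta,0,1)=1$ built into $r_\beta$, the non-triviality of the blow-up limit, and the strong maximum principle --- are already in place in the preceding discussion. The mildly subtle point is simply to observe that the contradiction argument needs only $\mf{V} \not\equiv \mf{0}$, a property guaranteed by the normalization alone, so that the lemma holds for arbitrary $\{x_\beta\}\subset K$, regardless of whether the points lie on the interface $\Gamma_\beta$.
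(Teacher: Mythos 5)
Your proof is correct and follows essentially the same route as the paper: both reduce the claim to uniform two-sided bounds on $\tsum_i v_{i,\beta}(0)$ via the blow-up of Theorem \ref{thm: blow-up} (and Remark \ref{rem: conv also not on gamma} for points not on $\Gamma_\beta$), with the lower bound coming from the normalization $H(\mf{v}_\beta,0,1)=1$ plus the strong maximum principle applied to the limit $\mf{V}$, exactly as in the paper's argument. The only (harmless) variation is that you obtain the upper bound directly from subharmonicity and Cauchy--Schwarz rather than from the compactness of the blow-up sequence, which the paper uses for both bounds.
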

\begin{proof}
By Theorem \ref{thm: blow-up} (see also Remark \ref{rem: conv also not on gamma}) we know that $\mf{v}_\beta \to \mf{V} \not \equiv \mf{0}$. Thus there exists $C>0$ such that
\[
	\frac{1}{C} \le   \sum_{i=1}^{k} v_{i,\beta}^2(0) \le C.
\]
Recalling the definition of $\mf{v}_\beta$ and using the triangular inequality, we obtain the desired result.
\end{proof}


We are ready to proceed with the: 
\begin{proof}[Proof of Theorem \ref{thm: lower general}]
Let $x_\beta \in \Gamma_\beta$, and let $r_\beta=r_\beta(x_\beta)$ be defined by Lemma \ref{lem: choice of r}. Let $\eps>0$ be fixed. By the continuity of the Almgren frequency function there exists $\bar r>0$ such that $N(\mf{u},x_0,\bar r) \le D+2\eps$. By convergence $N(\mf{u}_\beta,x_\beta,\bar r) \le D+\eps$ at least for $\beta$ sufficiently large, and as a consequence of Proposition \ref{prop: almgren} we deduce that
\[
N(\mf{u}_\beta,x_\beta, r) \le D+\eps \qquad \forall r \leq \bar r.
\]
Therefore
\[
\frac{d}{dr} \log H(\mf{u}_\beta,x_\beta,r) = \frac{2N(\mf{u}_\beta,x_\beta,r)}{r} \le \frac{2(D+\eps)}{r} \qquad \forall r \in (0,\bar r],
\]
which by integration implies that
\[
C_\eps \le \frac{H(\mf{u}_\beta,x_\beta,\bar r)}{\bar r^{2(D+\eps)}} \le \frac{H(\mf{u}_\beta,x_\beta, r)}{r^{2(D+\eps)}} \qquad \forall r \in (0,\bar r],
\]
with $C_\eps>0$ by \eqref{nontrivial}. In particular, recalling that $r_\beta \to 0$, this estimate holds for $r=r_\beta$, at least for $\beta$ sufficiently large. But then, thanks to Lemma \ref{lem: H con m} and the choice of $r_\beta$, we obtain
\begin{align*}
\left(\sum_{i=1}^k u_{i,\beta}(x_\beta) \right)^2  & \ge C_\eps r_\beta^{2(D+\eps)} \ge \frac{C_\eps}{\beta^{(D+\eps)}  H(\mf{u}_\beta,x_\beta,r_\beta)^{(D+\eps)}} \\
& \ge \frac{C_\eps }{\beta^{(D+\eps)} \left(\sum_{i=1}^k u_{i,\beta}(x_\beta) \right)^{2(D+\eps)}  }, 
\end{align*}
that is
\[
\beta^{D+\eps} \left(\sum_{i=1}^k u_{i,\beta}(x_\beta) \right)^{2(1+D+\eps)} \ge C_\eps,
\]
whence the thesis follows.
\end{proof}

As a corollary:
\begin{proof}[Proof of Theorem \ref{thm: lowe estimate}]
If $x_0 \in \mathcal{R}$, then Theorem \ref{thm: lower general} holds with $D=1$, or equivalently
\[
\liminf_{\beta \to +\infty} \beta^{1/4+\eps} \sum_{i=1}^k u_{i,\beta}(x_\beta) \ge C_\eps
\]
The thesis is then a consequence of this estimate and Theorem \ref{corol: decay components vanishing}, which will be proved in the next section with an independent argument.
\end{proof}

\begin{remark}
If $f_{i,\beta} \not \equiv 0$, then we know that $N(\mf{u}_\beta,x_\beta,r)$ is not necessarily monotone in $r$. But, using Proposition 3.5 in \cite{SoZi}, we have however that there exists $C>0$ independent of $\beta$ (for this we use (F1) and \eqref{boundedness}) such that $(N(\mf{u}_\beta,x_\beta,r) +1)\exp\{C r\}$ is non-decreasing in $r$. This allows to prove Theorem \ref{thm: lower general} in the following way: for $\eps>0$, we firstly fix $\rho>0$ such that
\[
(N(\mf{u},x_0,\rho)+1)e^{C \rho} \le (D+2\eps+1).
\]
Since $N(\mf{u},x_0,0^+)=D+\eps$, this is possible. At least for $\beta$ sufficiently large, this choice implies that $N(\mf{u}_\beta,x_\beta,r) \le D+\eps$ for any $0<r<\rho$ and $\beta$. As a consequence, we can proceed with the proof of Theorem \ref{thm: lower general} without further changes.
\end{remark}

\subsection{Further consequences of the existence of non-segregated blow-up}

In the rest of the section we keep the notation introduced in the proof of Theorem \ref{thm: blow-up}: $\{\mf{u}_\beta\}$ denotes the original sequence with limit $\mf{u} \not \equiv \mf{0}$ in $\mathcal{G}_{\loc}(\Omega)$, $\{\mf{v}_\beta\}$ denotes the scaled sequence defined in the quoted statement, which is converging in $\mathcal{C}^2_{\loc}(\R^N)$ to a limit $\mf{V}$, solution to \eqref{entire system a_{ij}} with at least two non-trivial components. As reviewed in Section \ref{sec: prel}, a relevant quantity to understand the properties of $\mf{V}$ is the limit at infinity of the Almgren frequency. Let
\[
d:= \lim_{r \to +\infty} N(\mf{V},0,r), \quad \text{and} \quad D:= \lim_{r \to 0^+} N(\mf{u},x_0,0^+).
\]

\begin{lemma}
In the previous notation, $d \le D$.
\end{lemma}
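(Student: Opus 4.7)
The plan is to combine the scaling identity $N(\mf{v}_\beta,0,r) = N(\mf{u}_\beta,x_\beta,r_\beta r)$ from \eqref{scaled quantities} with the monotonicity of the Almgren frequency for solutions of \eqref{system simplified} (Proposition \ref{prop: almgren}) and for segregated limits in $\mathcal{G}(\Omega)$ (Proposition \ref{prop: monot segregated}). The heuristic is very simple: large radii for $\mf{v}_\beta$ correspond, after unscaling, to very small radii for $\mf{u}_\beta$, so the asymptotic frequency of the blow-up is controlled by the frequency of $\mf{u}_\beta$ near $x_\beta$; in the limit $\beta\to +\infty$ the latter is in turn controlled by the frequency of $\mf{u}$ at $x_0$ on an arbitrarily small scale.

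More concretely, I would fix arbitrary $R>0$ and $\rho>0$. Since $r_\beta \to 0$ by Lemma \ref{lem: choice of r}, one has $r_\beta R < \rho$ for all $\beta$ large enough, and Proposition \ref{prop: almgren} gives
\[
N(\mf{v}_\beta,0,R) = N(\mf{u}_\beta,x_\beta, r_\beta R) \leq N(\mf{u}_\beta,x_\beta,\rho).
\]
The $\mathcal{C}^2_{\loc}$-convergence $\mf{v}_\beta \to \mf{V}$ from Theorem \ref{thm: blow-up} makes the left-hand side tend to $N(\mf{V},0,R)$. On the right-hand side, the combination of locally uniform and $H^1_{\loc}$ convergence of $\mf{u}_\beta\to\mf{u}$, together with the standard fact that $\int_K \beta u_{i,\beta}^2 u_{j,\beta}^2 \to 0$ for every compact $K\Subset \Omega$ (see e.g. \cite{NoTaTeVe,SoTaTeZi}), yields $N(\mf{u}_\beta,x_\beta,\rho) \to N(\mf{u},x_0,\rho)$ for a.e.\ $\rho>0$ (the almost-every restriction is only needed to handle the boundary integrals in $H$). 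Passing to the limit in $\beta$ one obtains, for such $\rho$,
\[
N(\mf{V},0,R) \leq N(\mf{u},x_0,\rho).
\]

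The conclusion then follows by sending $R\to +\infty$ and afterwards $\rho\to 0^+$: the first limit gives $d \leq N(\mf{u},x_0,\rho)$, and by Proposition \ref{prop: monot segregated} the function $\rho \mapsto N(\mf{u},x_0,\rho)$ is non-decreasing, so it decreases to $D = N(\mf{u},x_0,0^+)$ as $\rho\to 0^+$. The main (and only) delicate point I anticipate is the convergence $N(\mf{u}_\beta,x_\beta,\rho) \to N(\mf{u},x_0,\rho)$: the numerator $E(\mf{u}_\beta,x_\beta,\rho)$ contains both a gradient term (which passes to the limit by strong $H^1_{\loc}$ convergence) and the interaction term $\int \beta\sum_{i<j} a_{ij} u_{i,\beta}^2 u_{j,\beta}^2$, whose vanishing in the limit is the non-trivial ingredient borrowed from the literature on strongly competing systems. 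Everything else is a soft interplay between monotonicity and the blow-up scaling.
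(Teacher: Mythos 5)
Your argument is correct and follows essentially the same route as the paper: the scaling identity $N(\mf{v}_\beta,0,r)=N(\mf{u}_\beta,x_\beta,r_\beta r)$, the monotonicity of the Almgren frequency, the convergence $N(\mf{u}_\beta,x_\beta,\rho)\to N(\mf{u},x_0,\rho)$, and then the limits $r\to+\infty$, $\rho\to 0^+$. Your extra remarks on the vanishing of the interaction term and the (in fact unnecessary, thanks to locally uniform convergence) a.e.\ restriction in $\rho$ only make explicit what the paper leaves implicit.
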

\begin{proof}
By the convergence of $\{\mf{u}_\beta\}$ and $\{\mf{v}_\beta\}$, together with the monotonicity of the Almgren frequency function (see Proposition \ref{prop: almgren}), we have that for any $r,\rho>0$
\begin{align*}
N(\mf{V},0,r) &= \lim_{\beta \to +\infty} N(\mf{v}_\beta,0,r) = \lim_{\beta \to +\infty} N(\mf{u}_\beta,x_\beta,r_\beta r) \\
&\le \lim_{\beta \to +\infty} N(\mf{u}_\beta, x_\beta, \rho) = N(\mf{u},x_0,\rho)
\end{align*}
(notice that, for any $r,\rho>0$ it results that $r_\beta r \le \rho$ for $\beta$ sufficiently large). Passing to the limit as $r \to +\infty$ and $\rho \to 0^+$, we obtain the desired result. 
\end{proof}

Let us point out that the previous result is somehow sharp: without further technical assumptions, it is not possible to show that $d = D$ for a generic point $x_0$. Actually, it is possible to construct counterexamples with $d < D$: this can be done considering suitable translations of the original functions $\{\mf{u}_\beta\}$, so that the macroscopic scale and the blow-up scale behave in a different way. 

In any case, the previous lemma has two direct consequences:

\begin{proof}[Proof of Corollary \ref{thm: classification limits regular part}]
If $x_0 \in \mathcal{R}$, then by definition $D=1$. Therefore, the thesis is a simple consequence of the uniqueness of solutions of \eqref{entire system a_{ij}} with $N(\mf{V},0,+\infty) \le 1$ and having at least two non-trivial components (see the main results in \cite{Wa,Wa2} for $k=2$, and Theorem 1.3 in \cite{SoTe} for an arbitrary $k \ge 2$). 
\end{proof}

\begin{proof}[Proof of Corollary \ref{thm: non-simple blow-up}]
If $x_\beta \in \Sigma_\beta$, then we can show that $0$ is a singular point for the function $\mf{V}$, in the following sense: denoting the interface of $\mf{V}$ by $\Gamma_{\mf{V}}$, and its singular part by 
$\Sigma_{\mf{V}}$ (see Definitions \ref{def interface} and \ref{def singular interface}), we prove that since $x_\beta \in \Sigma_\beta$, then $\mf{0} \in \Sigma_{\mf{V}}$. Notice that by definition of $\Sigma_\beta$ there are two possibilities: either (up to a subsequence) there exist at least $3$ distinct indices such that
\begin{equation}\label{singular condition 1}
u_{i_1,\beta}(x_\beta) = u_{i_2,\beta}(x_\beta) = u_{i_3,\beta}(x_\beta) \ge u_{j,\beta}(x_\beta) \qquad \forall \beta, \ \forall j,
\end{equation}
or (up to a subsequence) there exist two indices $i_1$ and $i_2$ such that
\begin{equation}\label{singular condition 2}
\begin{cases}
u_{i_1,\beta}(x_\beta) = u_{i_2,\beta}(x_\beta) >  u_{j,\beta}(x_\beta) \quad \forall j \neq i_1,i_2,\\
\nabla (u_{i_1,\beta}-u_{i_2,\beta})(x_\beta) = 0 
\end{cases} \qquad \forall \beta.
\end{equation}
If \eqref{singular condition 1} is in force, then 
\[
v_{i_1,\beta}(0) = v_{i_2,\beta}(0) = v_{i_3,\beta}(0) \ge v_{j,\beta}(0) \qquad \forall \beta, \forall j,
\]
while if \eqref{singular condition 2} holds, then
\[
\begin{cases}
v_{i_1,\beta}(0) = v_{i_2,\beta}(0) > v_{j,\beta}(0) \quad \forall j \neq i_1,i_2,\\
\nabla (v_{i_1,\beta}-v_{i_2,\beta})(0) = 0 
\end{cases} \qquad \forall \beta.
\]
Recalling that $\mf{v}_\beta \to \mf{V}$ in $\mathcal{C}^2_{\loc}(\R^N)$, we infer that in any case $0 \in \Sigma_{\mf{V}}$. Now, if $\mf{V}$ is $1$-dimensional, then $\Gamma_{\mf{V}}$ is a hyperplane and $\Sigma_{\mf{V}} = \emptyset$. Therefore, $\mf{V}$ cannot be $1$-dimensional. Since $\mf{V}$ is not $1$-dimensional, by \cite[Theorem 1.3 and Corollary 1.9]{SoTe} we have $3/2 \le d$, and since $d \le D$ we conclude that $x_0 \in \Sigma$.
\end{proof}

We now proceed with proof of Theorem \ref{cor: improved decay singular sequence}. 

\begin{proposition}\label{prop 151}
Under assumption  \eqref{nontrivial}, let $\mf{V}$ be the limit profile given by Theorem \ref{thm: blow-up}, and let $d = N(\mf{V},0,+\infty)$. For any $\eps>0$ there exists $C_\eps>0$ such that 
\[
\limsup_{\beta \to +\infty} \beta^{(d-\eps)/(2+2d)}  \left(\sum_{i=1}^k u_{i,\beta}(x_\beta) \right) \le C_\eps.
\]
\end{proposition}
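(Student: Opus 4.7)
The strategy is to convert the decay of $\sum_{i}u_{i,\beta}(x_\beta)$ into a lower bound on the blow-up scale $r_\beta=r_\beta(x_\beta)$, and then to extract such a lower bound from the Almgren frequency of the blow-up limit $\mf{V}$. By the choice of $r_\beta$ in Lemma \ref{lem: choice of r} we have $H(\mf{u}_\beta,x_\beta,r_\beta)=1/(\beta r_\beta^2)$, and Lemma \ref{lem: H con m} gives
\[
\sum_{i=1}^k u_{i,\beta}(x_\beta)\;\le\; C\,H(\mf{u}_\beta,x_\beta,r_\beta)^{1/2}\;=\;\frac{C}{\beta^{1/2}\,r_\beta}.
\]
Thus it suffices to prove that for every $\eps'>0$ small there is a constant $c_{\eps'}>0$ such that, for all $\beta$ large,
\begin{equation}\label{eq: r beta lb}
r_\beta \;\ge\; c_{\eps'}\,\beta^{-1/(2(d+1-2\eps'))},
\end{equation}
since then $\beta^{-1/2}/r_\beta\le C\beta^{-(d-2\eps')/(2(d+1-2\eps'))}$, and for $\eps'=\eps'(\eps)$ small enough the exponent is at most $-(d-\eps)/(2+2d)$, yielding the claim.

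The lower bound \eqref{eq: r beta lb} is obtained by comparing the blow-up Almgren frequency at large radii with its limit at infinity $d$. By the definition of $d$ and monotonicity of $N(\mf{V},0,\cdot)$ (Proposition \ref{prop: monot segregated} applied to $\mf{V}\in\mathcal{G}_{\loc}(\R^N)$, which is the limit in $\mathcal{G}$ of the scaled sequence), for any $\eps'>0$ we can fix $R=R(\eps')>0$ so large that $N(\mf{V},0,R)\ge d-\eps'$. Since $\mf{v}_\beta\to\mf{V}$ in $\mathcal{C}^2_{\loc}(\R^N)$ by Theorem \ref{thm: blow-up}, we get $N(\mf{v}_\beta,0,R)\ge d-2\eps'$ for $\beta$ large. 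Then the monotonicity of $r\mapsto N(\mf{v}_\beta,0,r)$ (Proposition \ref{prop: almgren} applied to $\mf{v}_\beta$, which solves the system \eqref{scaled equation} in the exhausting domains $(\Omega-x_\beta)/r_\beta$) propagates this to all $r\in[R,\bar r/r_\beta]$, and integrating the identity $(\log H(\mf{v}_\beta,0,r))'=2N(\mf{v}_\beta,0,r)/r$ yields
\[
H(\mf{v}_\beta,0,r)\;\ge\;H(\mf{v}_\beta,0,R)\left(\tfrac{r}{R}\right)^{2(d-2\eps')}\qquad\text{for }R\le r\le \bar r/r_\beta.
\]

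Finally, choosing $r=\bar r/r_\beta$ (which exceeds $R$ since $r_\beta\to0$) and unwinding the scaling \eqref{scaled quantities}, we obtain
\[
\frac{H(\mf{u}_\beta,x_\beta,\bar r)}{H(\mf{u}_\beta,x_\beta,r_\beta)}\;=\;H(\mf{v}_\beta,0,\bar r/r_\beta)\;\ge\; c\,r_\beta^{-2(d-2\eps')},
\]
where we used that $H(\mf{v}_\beta,0,R)$ converges to the strictly positive $H(\mf{V},0,R)$. Since $\{\mf{u}_\beta\}$ is $L^\infty$-bounded on $K$, we have $H(\mf{u}_\beta,x_\beta,\bar r)\le C$ uniformly in $\beta$; combined with $H(\mf{u}_\beta,x_\beta,r_\beta)=1/(\beta r_\beta^2)$, this gives $\beta r_\beta^{2+2(d-2\eps')}\ge c$, which is exactly \eqref{eq: r beta lb}.

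The main subtle point is the third paragraph: one needs monotonicity of $N(\mf{v}_\beta,0,\cdot)$ on the full range up to $\bar r/r_\beta$ and a lower bound at a \emph{large but fixed} radius $R$, so that the $\mathcal{C}^2_{\loc}$-convergence (which only sees bounded radii) can be used to transfer information from $\mf{V}$ to $\mf{v}_\beta$; this is precisely what determines the small loss $2\eps'$ and, ultimately, the $\eps$ in the statement.
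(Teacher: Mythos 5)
Your proof is correct and is essentially the paper's own argument in a slightly different bookkeeping: both transfer the frequency bound $N(\mf{v}_\beta,0,R)\ge d-O(\eps)$ from the blow-up limit at a large fixed radius, integrate $\frac{d}{dr}\log H = 2N/r$ between the scale $\sim r_\beta$ and the fixed scale $\bar r$ (where $H$ is bounded by \eqref{boundedness}), and close with $\beta H(\mf{u}_\beta,x_\beta,r_\beta)r_\beta^2=1$ and Lemma \ref{lem: H con m} --- you merely phrase the outcome as a lower bound on $r_\beta$, while the paper phrases it through the monotone quantity $H(\mf{u}_\beta,x_\beta,r)/r^{2N(\mf{u}_\beta,x_\beta,r)}$. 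One small correction: the monotonicity of $N(\mf{V},0,\cdot)$ (hence the existence of $d$) should not be attributed to Proposition \ref{prop: monot segregated}, since $\mf{V}$ solves \eqref{entire system a_{ij}} and is not an element of $\mathcal{G}_{\loc}(\R^N)$; it follows from Proposition \ref{prop: almgren} applied to the unit-coupling system, or by passing to the limit in the monotone functions $N(\mf{v}_\beta,0,\cdot)$.
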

\begin{proof}
We study the monotonicity of the function
\[
r \mapsto \frac{H(\mf{u}_\beta,x_\beta,r)}{r^{2N(\mf{u}_\beta,x_\beta,r)}} \qquad r \in (0,\bar r],
\]
where we recall that $\bar r>0$  has been chosen so that $B_{2\bar r}(x) \Subset \Omega$ for every $x \in K$. Recalling Proposition \ref{prop: almgren}, we have
\begin{align*}
\frac{d}{dr} \log \frac{H(\mf{u}_\beta,x_\beta,r)}{r^{2N(\mf{u}_\beta,x_\beta,r)}} &= \frac{d}{dr} \log H(\mf{u}_\beta,x_\beta,r) - \frac{d}{dr}\left( 2N(\mf{u}_\beta,x_\beta,r) \log r \right)  \\
&  = \frac{2N(\mf{u}_\beta,x_\beta,r)}{r} - \frac{2N(\mf{u}_\beta,x_\beta,r)}{r} - 2(\log r) \frac{d}{dr} N(\mf{u}_\beta,x_\beta,r)  \\
& \ge 0 \qquad \forall r \in (0,\bar r].
\end{align*}
Therefore, using also the boundedness of $\{\mf{u}_\beta\}$, we infer that
\[
H(\mf{u}_\beta,x_\beta,r) \le H(\mf{u}_\beta,x_\beta,\bar r) r^{2N(\mf{u}_\beta,x_\beta,r)} \le C r^{2N(\mf{u}_\beta,x_\beta,r)} \qquad \forall r \in (0,\bar r], 
\]
Now, since $d= N(\mf{V},0,+\infty)$, for any $\eps>0$ there exists $\rho=\rho(\eps)>1$ sufficiently large such that $N(\mf{V},0,\rho)=d-\eps/2$, and hence by convergence $d-\eps \le N(\mf{v}_\beta,0,\rho) \le d$ for $\beta$ sufficiently large. With this choice of $\rho$, we observe that always for $\beta$ large
\begin{equation}\label{stima 151}
H(\mf{u}_\beta,x_\beta,\rho r_\beta) \le  C (\rho r_\beta)^{2N(\mf{u}_\beta,x_\beta, \rho r_\beta)},
\end{equation}
as $r_\beta \to 0$ as $\beta \to +\infty$. 
The left hand side can be controlled from below as follows:
\begin{equation}\label{stima basso 151}
\left(\sum_{i=1}^k u_{i,\beta}(x_\beta) \right)^2  \le C H(\mf{u}_\beta,x_\beta,r_\beta) \le C H(\mf{u}_\beta,x_\beta,r_\beta \rho),
\end{equation}
where we used Lemma \ref{lem: H con m} and the monotonicity of $H$ (recall that $\rho>1$). To control the right hand side in \eqref{stima 151}, we recall \eqref{scaled quantities} and that $N(\mf{v}_\beta,0,\rho) \le d$ for every $\beta$ large, so that
\[
 C (\rho r_\beta)^{2N(\mf{u}_\beta,x_\beta, \rho r_\beta)} \le C \rho^{2d} r_\beta^{2N(\mf{v}_\beta,0, \rho)} \le C_\eps r_\beta^{2N(\mf{v}_\beta,0, \rho)},
 \]
 where the dependence of $C$ on $\eps$ comes from the dependence $\rho=\rho(\eps)$. By definition of $r_\beta$ (see Lemma \ref{lem: choice of r}), the previous estimate implies that
\begin{equation}\label{stima alto 151}
\begin{split}
 C (\rho r_\beta)^{2N(\mf{u}_\beta,x_\beta, \rho r_\beta)} &\le \frac{C_\eps}{ \beta^{N(\mf{v}_\beta,0, \rho)} H(\mf{u}_\beta,x_\beta,r_\beta)^{N(\mf{v}_\beta,0, \rho) }  } \\
 & \le \frac{C_\eps}{ \beta^{N(\mf{v}_\beta,0, \rho)} \left( \sum_{i=1}^k u_{i,\beta}(x_\beta) \right)^{2N(\mf{v}_\beta,0, \rho)}  },
 \end{split}
\end{equation}
where in the last step we used Lemma \ref{lem: H con m}. Collecting \eqref{stima basso 151} and \eqref{stima alto 151}, and coming back to \eqref{stima 151}, we conclude that 
\[
\beta^{d-\eps}  \left(\sum_{i=1}^k u_{i,\beta}(x_\beta) \right)^{2+2d} \le \beta^{N(\mf{v}_\beta,0, \rho)}  \left(\sum_{i=1}^k u_{i,\beta}(x_\beta) \right)^{2+ 2 N(\mf{v}_\beta,0, \rho)} \le C_\eps
\]
for some $C_\eps>0$ independent of $\beta$. 
\end{proof}

As a consequence:

\begin{proof}[Proof of Theorem \ref{cor: improved decay singular sequence}]
Let $\mf{V}$ be the limit profile defined in Theorem \ref{thm: blow-up}. By Corollary \ref{thm: non-simple blow-up} it is not $1$-dimensional, and hence by \cite[Theorem 1.3 and Corollary 1.9]{SoTe} it results $3/2 \le d$. Since the function $d \mapsto (d-\eps)/(2+2d)$ is strictly increasing for $d \ge 1$, this together with Proposition \ref{prop 151} gives the desired result.
\end{proof}

\begin{remark}
As in the previous subsections, we point out that replacing Proposition \ref{prop: almgren} with Proposition 3.5 in \cite{SoZi} and refining the computations, it is not difficult to extend the above proofs in case $f_{i,\beta} \not \equiv 0$.
\end{remark}

\subsection{General decay estimate around singular points}

In this subsection we prove Theorem \ref{thm: decay higher multiplicity}. Let us fix $x_0 \in \Sigma$, so that by definition $D:= N(\mf{u},x_0,0^+)>1$, and let $0<\eps<D-1$ be arbitrarily chosen. Using the notation introduced in Theorem \ref{thm: blow-up}, let $d:= N(\mf{V},0,+\infty)$. If $d>1$, then we can proceed as in Corollary \ref{cor: improved decay singular sequence}, whose thesis is in fact stronger than the one considered here. Thus, we have only to examine the case $d=1$: we recall that this means that $\mf{V}$ is the only $1$-dimensional solution of \eqref{entire system a_{ij}}, \emph{having linear growth}. Let us introduce
\[
\begin{split}
R_\beta &:= \inf\left\{r>0: N(\mf{u}_\beta,x_\beta,r) >D-\eps \right\} \\
\rho_\beta& := \inf\left\{r>0: N(\mf{u}_\beta,x_\beta,r) >1 \right\}. 
\end{split}
\]
Let $\bar r>0$ be such that $B_{2 \bar r}(x) \Subset \Omega$ for any $x \in K$. Recall now that $N(\mf{u}_\beta,x_\beta,\cdot)$ is non-decreasing. Thus, observing that for any $r \in (0,\bar r]$ one has $N(\mf{u}_\beta,x_\beta,r) \to N(\mf{u},x_0,r) \ge D$ as $\beta \to +\infty$, while $N(\mf{u}_\beta,x_\beta,0^+)= 0$ for any $\beta$ fixed, we deduce that $\rho_\beta$ and $R_\beta$ are positive real numbers, and $0<\rho_\beta<R_\beta \to 0^+$. 

With the notation of Theorem \ref{thm: blow-up}, let 
\[
\begin{split}
\bar R_\beta &:= \inf\left\{r>0: N(\mf{v}_\beta,0,r) >D-\eps \right\} \\
\bar \rho_\beta& := \inf\left\{r>0: N(\mf{v}_\beta,0,r) >1 \right\}. 
\end{split}
\]
Notice that, by definition and \eqref{scaled quantities}, one has 
\begin{equation}\label{rapporti scales}
\bar R_\beta = \frac{R_\beta}{r_\beta} \quad \text{and} \quad \bar \rho_\beta = \frac{\rho_\beta}{r_\beta}.
\end{equation}
Moreover, recall that $\mf{v}_\beta$ is defined in a domain containing the ball $B_{\bar r/r_\beta}$. 

Having introduced $\bar \rho_\beta$ and $R_\beta$, we can now borrow some ideas from the proof of Theorem 1.3 in \cite{SoZi}, see Section 4 therein. We shall carry some information through the different scales  $1<\rho_\beta<R_\beta<\bar r/r_\beta$. In doing so, we shall use three different monotonicity formulae, one from each interval $(1,\rho_\beta)$, $(\rho_\beta,R_\beta)$, $(R_\beta, \bar r/r_\beta)$, whose validity rests essentially on the corresponding estimate on $N(\mf{v}_\beta,0,\cdot)$.

\begin{lemma}
It results that $\bar \rho_\beta, \bar R_\beta \to +\infty$ as $\beta \to +\infty$.
\end{lemma}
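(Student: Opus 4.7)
Since $N(\mf{u}_\beta,x_\beta,\cdot)$ is non-decreasing (Proposition \ref{prop: almgren}) and $1 < D - \eps$, by definition $\rho_\beta \le R_\beta$; dividing by $r_\beta$ and using \eqref{rapporti scales} we see $\bar\rho_\beta \le \bar R_\beta$. Thus it is enough to show that $\bar\rho_\beta \to +\infty$.

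The plan is to exploit the $\mathcal{C}^2_{\loc}(\R^N)$ convergence $\mf{v}_\beta \to \mf{V}$ from Theorem \ref{thm: blow-up} together with properties of the limit profile $\mf{V}$. Since we are in the case $d = N(\mf{V},0,+\infty) = 1$, we have reviewed above that $\mf{V}$ is (up to rigid motion) the unique $1$-dimensional solution of \eqref{entire system a_{ij}} with linear growth and exactly two non-trivial components. In particular $\mf{V}$ is non-constant: any two positive constants $V_{i_1}, V_{i_2}$ solving $\Delta V_i = V_i \sum_{j \neq i} a_{ij} V_j^2$ would force $V_{i_1} V_{i_2}^2 = 0$, contradicting positivity. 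Hence, by Remark \ref{rem: homogeneity non-segregated} applied to $\mf{V}$, the map $r \mapsto N(\mf{V},0,r)$ is \emph{strictly} increasing on $(0,+\infty)$. Combined with $N(\mf{V},0,+\infty) = 1$, this yields
\[
N(\mf{V},0,r) < 1 \qquad \text{for every } r > 0.
\]

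Now fix an arbitrary $R > 0$. The scaled equation \eqref{scaled equation} together with $\mathcal{C}^2_{\loc}(\R^N)$ convergence $\mf{v}_\beta \to \mf{V}$ gives convergence of both the boundary integral $H(\mf{v}_\beta,0,R)$ and the energy integral $E(\mf{v}_\beta,0,R)$ (this last one including the coupling term $2 \sum_{i<j} a_{ij} v_{i,\beta}^2 v_{j,\beta}^2$, which converges locally uniformly to its limit counterpart). Since $H(\mf{V},0,R) > 0$ (because $\mf{V} \not\equiv \mf{0}$ and by Proposition \ref{prop: monot segregated}-type positivity of $H$ for non-trivial solutions), we deduce
\[
N(\mf{v}_\beta,0,R) \longrightarrow N(\mf{V},0,R) < 1 \qquad \text{as } \beta \to +\infty.
\]
Therefore, for every $\beta$ large enough we have $N(\mf{v}_\beta,0,R) < 1$, which by definition of $\bar\rho_\beta$ forces $\bar\rho_\beta > R$. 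Since $R$ was arbitrary, $\bar\rho_\beta \to +\infty$, and hence $\bar R_\beta \to +\infty$ as well.

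The core of the argument is essentially bookkeeping once one recognizes that in the case $d = 1$ the limit profile $\mf{V}$ is the $1$-dimensional linear-growth solution, so its Almgren quotient stays \emph{strictly} below its asymptotic value $1$ on every finite ball. The only point requiring some care is to ensure that $\mathcal{C}^2_{\loc}$ convergence of $\mf{v}_\beta$ translates into genuine convergence of $N(\mf{v}_\beta,0,R)$, which is guaranteed by the convergence of the quadratic interaction term $v_{i,\beta}^2 v_{j,\beta}^2$ on the compact $\overline{B_R}$ (where $\mf{v}_\beta$ is defined for $\beta$ large, by Lemma \ref{lem: choice of r}) and by $H(\mf{V},0,R) > 0$.
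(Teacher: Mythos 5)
Your proof is correct and follows essentially the same route as the paper: both reduce to showing $\bar\rho_\beta\to+\infty$, use the convergence $N(\mf{v}_\beta,0,r)\to N(\mf{V},0,r)$ coming from the blow-up, and exploit Remark \ref{rem: homogeneity non-segregated} to upgrade $N(\mf{V},0,r)\le 1$ to the strict inequality $N(\mf{V},0,r)<1$ on every finite radius. The only difference is cosmetic: you argue directly (for each fixed $R$, eventually $N(\mf{v}_\beta,0,R)<1$, hence $\bar\rho_\beta\ge R$ by monotonicity), whereas the paper phrases the same step as a contradiction with a convergent subsequence of $\{\bar\rho_\beta\}$.
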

\begin{proof}
Since by definition $\bar \rho_\beta \le \bar R_\beta$, it is sufficient to check that $\bar \rho_\beta \to +\infty$. This is a simple consequence of the convergence $\mf{v}_\beta \to \mf{V}$, and of the fac that $N(\mf{V},0,r)  \le 1$ for every $r>0$. As observed in Remark \ref{rem: homogeneity non-segregated}, since $\mf{V}$ solves \eqref{entire system a_{ij}} this implies that $N(\mf{V},0,r) <1$ for every $r >0$. Therefore, if by contradiction we suppose that $\{\bar \rho_\beta\}$ is bounded, we obtain up to a subsequence $\bar \rho_\beta \to \bar \rho$, and hence
\[
N(\mf{V},0,\bar \rho) = \lim_{\beta \to +\infty} N(\mf{v}_\beta,0,\bar \rho_\beta) =1,
\]
a contradiction.
\end{proof}

By definition and by Proposition \ref{prop: monot e+h}, in the intervals $(\bar \rho_\beta,\bar R_\beta)$ and $(\bar R_\beta, \bar r/r_\beta)$ we have two powerful monotonicity formulae. In $(1,\bar \rho_\beta)$ we do not have any estimate from below on the Almgren's frequency, and hence we shall use a perturbed Alt-Caffarelli-Friedman monotonicity formula. To this end, we recall again that $\mf{v}_\beta \to \mf{V}$ in $\mathcal{C}^2_{\loc}(\R^N)$, and $\mf{V}$ the unique $1$-dimensional solution of \eqref{entire system a_{ij}}, which have exactly two non-trivial components. Up to a relabelling, it is not restrictive to suppose that $V_1,V_2 \not \equiv 0$, so that we are naturally led to consider $J_\beta(r) :=r^{-4} J_{1,\beta}(r) \cdot J_{2,\beta}(r)$, where
\begin{align*}
J_{1,\beta}(r) &:= \int_{B_r} \left(\left|\nabla v_{1,\beta}\right|^2 + a_{12} v_{1,\beta}^2 v_{2,\beta}^2 \right)|x|^{2-N}  \\
J_{2,\beta}(r)& := \int_{B_r} \left(\left|\nabla v_{2,\beta}\right|^2 + a_{12} v_{1,\beta}^2 v_{2,\beta}^2\right)|x|^{2-N}.
\end{align*}
The validity of the following monotonicity formula will be the key in our concluding argument.

\begin{lemma}\label{lem: ACF uniform}
There exists $C>0$ independent of $\beta$ such that $J_{1,\beta}(r) \ge C$ and $J_{2,\beta}(r) \ge C$ for every $r \in [1,\bar \rho_\beta/3]$, and
\[
r \mapsto J_{\beta}(r) e^{-C r^{-1/2}} \quad \text{is non-decreasing for $r \in [1,\bar \rho_\beta/3]$}.
\]
\end{lemma}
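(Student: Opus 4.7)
The plan is to apply the perturbed Alt--Caffarelli--Friedman monotonicity formula of Proposition \ref{prop: ACF} to the rescaled functions $\mf{v}_\beta$. These solve \eqref{scaled equation}, which is precisely \eqref{system simplified} with effective coupling parameter $1$, so that Proposition \ref{prop: ACF} applied at parameter $\beta=1$ gives the exponential weight $\exp\{-Cr^{-1/2}\}$ asserted in the lemma. The work consists of two separate tasks: the lower bound on $J_{i,\beta}$, and the verification of the two hypotheses of Proposition \ref{prop: ACF} on the interval $[1,\bar\rho_\beta/3]$, both uniformly in $\beta$.

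For the lower bound $J_{i,\beta}(r)\ge C$, observe that the integrands in $J_{i,\beta}$ are non-negative, so $r\mapsto J_{i,\beta}(r)$ is non-decreasing; it therefore suffices to bound $J_{i,\beta}(1)$ from below. Here I use that $\mf{v}_\beta\to\mf{V}$ in $\mathcal{C}^2_{\loc}(\R^N)$ and that, since $d=1$, by Corollary \ref{thm: classification limits regular part} (or the classification results in \cite{Wa,Wa2,SoTe}) the limit $\mf{V}$ is the unique $1$-dimensional linear-growth solution with exactly two nontrivial components $V_1,V_2>0$ on all of $\R^N$. In particular $V_1^2V_2^2>0$ on $B_1$, so the limit integral is strictly positive, and by uniform $\mathcal{C}^2$ convergence $J_{i,\beta}(1)\ge C>0$ for all $\beta$ sufficiently large, yielding $J_{i,\beta}(r)\ge C$ on $[1,\bar\rho_\beta/3]$.

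The more delicate task is to verify, uniformly in $\beta$, the two ACF hypotheses on $[1,\bar\rho_\beta/3]$: a two-sided bound on the ratio $\int_{\partial B_r}v_{1,\beta}^2/\int_{\partial B_r}v_{2,\beta}^2$, and a lower bound $r^{1-N}\int_{\partial B_r}v_{1,\beta}^2\ge\mu$. I plan a blow-down argument by contradiction: assume a sequence $(\beta_n,r_n)$ with $r_n\in[1,\bar\rho_{\beta_n}/3]$ violates one of these; set
\[
w_n(x):=\frac{\mf{v}_{\beta_n}(r_nx)}{H(\mf{v}_{\beta_n},0,r_n)^{1/2}},
\]
which solves the same type of system with effective coupling $\tilde\beta_n:=r_n^2 H(\mf{v}_{\beta_n},0,r_n)$ and satisfies $H(w_n,0,1)=1$ and $N(w_n,0,s)\le 1$ for $s\in[1/r_n,\bar\rho_{\beta_n}/r_n]\supset[1/r_n,3]$. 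The bound $N\le 1$ on $[0,\bar\rho_{\beta_n}]$ gives $H(\mf{v}_{\beta_n},0,r_n)/r_n^2$ non-increasing, yielding the two-sided control $c r_n^2\le\tilde\beta_n\le Cr_n^4$. If $r_n$ stays bounded, the $\mathcal{C}^2_{\loc}$ convergence of $\mf{v}_\beta$ to $\mf{V}$ (with both components strictly positive) directly provides the required bounds; if $r_n\to+\infty$, then $\tilde\beta_n\to+\infty$ and $\{w_n\}$ converges to a segregated profile in $\mathcal{G}_{\loc}(\R^N)$ whose Almgren quotient satisfies $N\le 1$ everywhere, hence by Proposition \ref{prop: monot segregated} and Remark \ref{rem: homogeneity non-segregated} it is $1$-homogeneous and of the form $W_i=\alpha_i(\pm x\cdot e)^+$ for two indices; the normalization $H(w_n,0,1)=1$ forces bounded positive ratios and a lower bound on $\int_{\partial B_1}W_1^2$, contradicting the failure of the hypothesis. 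Once the ACF hypotheses are verified with constants $\lambda,\mu$ independent of $\beta$, Proposition \ref{prop: ACF} (in its $\beta=1$ form) provides the desired monotonicity of $J_\beta(r)\exp\{-Cr^{-1/2}\}$ on $[1,\bar\rho_\beta/3]$. The main obstacle is precisely this blow-down step, which requires handling both the ``small coupling'' and ``large coupling'' regimes for $\tilde\beta_n$ simultaneously, and invoking the sharp classification of 1-homogeneous segregated limits together with the uniqueness of the 1-d linear-growth solution of \eqref{entire system a_{ij}}.
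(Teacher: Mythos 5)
Your reduction is the same as the paper's: apply Proposition \ref{prop: ACF} to $(v_{1,\beta},v_{2,\beta})$, which by \eqref{scaled equation} solve the system with coupling parameter $1$ (whence the weight $e^{-Cr^{-1/2}}$), so that everything hinges on verifying the two hypotheses of that proposition with $\lambda,\mu$ independent of $\beta$ and of $r\in[1,\bar\rho_\beta/3]$. Your treatment of the lower bound $J_{i,\beta}(r)\ge C$ (monotonicity in $r$ plus convergence at $r=1$), of the $\mu$-bound (which indeed also follows directly from the subharmonicity of $v_{1,\beta}^2$ together with convergence at $r=1$), and of the case of bounded radii is fine.

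The gap is in the blow-down step for $r_n\to+\infty$, which is where the whole difficulty of the lemma sits. Writing $H_i(r):=r^{1-N}\int_{\partial B_r}v_{i,\beta}^2$, the contradiction hypothesis (say $H_1(r_n)/H_2(r_n)\to\infty$) becomes, after your rescaling, the statement that the share of the second component on $\partial B_1$ tends to $0$; the normalization $H(w_n,0,1)=1$ only controls the \emph{total} spherical mass, so it does not ``force bounded positive ratios'' as you claim --- that is exactly the statement to be proved, and invoking it here is circular. Concretely, the classification you reach (via Proposition \ref{prop: monot segregated} and the argument used in the proof of Theorem \ref{thm: reifenberg flat uniform}) only says that $\mf{W}=\alpha\big((x\cdot e)^+,(x\cdot e)^-\big)$ carried by \emph{some} pair of indices; nothing identifies this pair as $\{1,2\}$. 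A limit with $W_2\equiv 0$ and surviving pair $\{1,j\}$, or even $\{j,l\}$ with $j,l\ge 3$, is compatible with every fact you use: the components $j\ge3$ are negligible at scale $1$, but the only growth information available to you is $H_j(r)\le H(\mf{v}_\beta,0,r)\le r^2$ (from $N\le1$), which is the same order as $H_1,H_2$, so a takeover at the scales $r_n\to\infty$ is not excluded by soft means, and no fixed compact set sees simultaneously the scale where $\{1,2\}$ dominate and the scale $r_n$. This is precisely why the paper does not argue by compactness here: it defers the verification of the $\lambda$-bound to the quantitative argument of \cite[Lemma 4.9]{SoZi}, where the comparability of the two distinguished components is propagated through the whole range of scales by a bootstrap that uses the ACF formula itself on the interval where the hypotheses already hold together with $N(\mf{v}_\beta,0,r)\le 1$ (which gives $H\le r^2$ and, via the computation in \eqref{da J a E}, $J_{i,\beta}(r)\le Cr^2$, hence $J_{1,\beta}(r),J_{2,\beta}(r)\ge cr^2$, and then $H_1(r),H_2(r)\ge cr^2$ through the derivative identity for $H_i$), closing a continuity argument in $r$. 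Your proposal contains no substitute for this mechanism, so as written the key step fails for $k>2$; even for $k=2$ it would need an additional ingredient (e.g.\ stopping at the first radius where the ratio reaches a large threshold and using that the two halves of the homogeneous limit carry equal mass on $\partial B_1$) that is not in your text.
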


The proof consists in checking that the assumptions of Proposition \ref{prop: ACF} are satisfied by $(v_{1,\beta},v_{2,\beta})$ uniformly in $\beta$: that is, one has to show that there exist $\lambda,\mu>0$ such that
\[
\frac{1}{\lambda} \le \frac{ \int_{\pa B_r} v_{1,\beta}^2 }{\int_{\pa B_r} v_{2,\beta}^2} \le \lambda \quad \text{and} \quad \frac{1}{r^{N-1}} \int_{\pa B_r} v_{1,\beta}^2 \ge \mu
\]
for every $r \in [1,\bar \rho_\beta/3]$, for every $\beta$. This can be done arguing exactly as in the proof of Lemma 4.9 in \cite{SoZi} (actually the proof is easier in the present setting, since we neglected the nonlinearities $f_{i,\beta}$), see Section 4.1 in the quoted paper, and thus we omit the details. We emphasize that there we only used the fact that $(v_{1,\beta},v_{2,\beta}) \to (V_1,V_2)$ locally uniformly and in $H^1_{\loc}(\R^N)$, with $V_1,V_2 \not \equiv 0$, and the control $N(\mf{v}_\beta,0,r) \le 1$ for $r \le \bar \rho_\beta/3$. Both these properties are satisfied in the present setting.

With Lemma \ref{lem: ACF uniform} in hands, we can proceed with the:

\begin{proof}[Conclusion of the proof of Theorem \ref{thm: decay higher multiplicity}]
By Lemma \ref{lem: ACF uniform}, and since $(V_1,V_2)$ are two positive non-constant functions, for some $C>0$ we have
\begin{equation}\label{chain 1}
C \le J_\beta(1) e^{-C} \le J_\beta\left(\frac{\bar \rho_\beta}{3} \right)e^{-C \bar \rho_\beta^{-1/2}} \le C  J_\beta\left(\bar \rho_\beta \right).
\end{equation}
We claim that
\[
 J_\beta\left(\bar \rho_\beta \right) \le \left( \frac{E(\mf{v}_\beta,0,\bar \rho_\beta) + \frac{N-2}{2} H(\mf{v}_\beta,0,\bar \rho_\beta)}{\bar{\rho}_\beta^2}\right)^2.
\]
To prove it, we firstly test the equation for $v_{1,\beta}$ with $v_{1,\beta} |x|^{2-N}$ in $B_r$; integrating by parts twice we obtain
\begin{align*}
J_{1,\beta}(r) & = -\frac12 \int_{B_r} \nabla(v_{1,\beta}^2)\cdot \nabla (|x|^{2-N}) + \frac{1}{r^{N-2}}\int_{\pa B_r} v_{1,\beta} \pa_{\nu} v_{1,\beta} \\
& \le \frac{1}{r^{N-2}}\int_{\pa B_r} v_{1,\beta} \pa_{\nu} v_{1,\beta} +\frac{N-2}{2r^{N-1}} \int_{\pa B_r} v_{1,\beta}^2.
\end{align*}
Now the divergence theorem yields
\begin{equation}\label{da J a E}
\begin{split}
J_{1,\beta}(r) & \le \frac{1}{r^{N-2}} \int_{B_r} |\nabla v_{1,\beta}|^2 + a_{12} v_{1,\beta}^2 \sum_{j \neq 1} v_{j,\beta}^2 + \frac{N-2}{2r^{N-1}} \int_{\pa B_r} v_{1,\beta}^2 \\
 & \le E(\mf{v}_\beta,0,r) + \frac{N-2}{2}H(\mf{v}_\beta,0,r).
\end{split}
\end{equation}
If we choose $r=\bar \rho_\beta$ and we use the same argument on $J_{2,\beta}$, the claim follows. 

Thus, coming back to \eqref{chain 1} we have
\[
C \le J_\beta(\bar \rho_\beta) \le C \left( \frac{E(\mf{v}_\beta,0,\bar \rho_\beta) + H(\mf{v}_\beta,0,\bar \rho_\beta)}{	\bar \rho_\beta^2} \right)^2,
\]
and on the last term we can apply the monotoncity formula of Proposition \ref{prop: monot e+h}, available firstly in the interval $(\bar \rho_\beta,\bar R_\beta)$ with $\gamma=1$, and secondly in $(\bar R_\beta, \bar r/r_\beta)$ with $\gamma= D-\eps$: recalling \eqref{scaled quantities} and Lemma \ref{lem: N bounded} this gives
\begin{align*}
C & \le \left( \frac{E(\mf{v}_\beta,0,\bar \rho_\beta) + H(\mf{v}_\beta,0,\bar \rho_\beta)}{\bar \rho_\beta^2} \right)^2 \\
& \le \left( \frac{E(\mf{v}_\beta,0,\bar R_\beta) + H(\mf{v}_\beta,0,\bar R_\beta)}{\bar R_\beta^2} \cdot \left( \frac{\bar R_\beta}{\bar R_\beta}\right)^{2(D-\eps-1)}  \right)^2   \\
& \le \left( \frac{E(\mf{v}_\beta,0,\bar r/r_\beta) + H(\mf{v}_\beta,0,\bar r/r_\beta)}{\bar r^{2(D-\eps)}} r_\beta^{2(D-\eps)}  \right)^2 \cdot \bar R_\beta^{4(D-\eps-1)} \\
& = \left( \frac{E(\mf{u}_\beta,0,\bar r) + H(\mf{u}_\beta,0,\bar r)}{\bar r^{2(D-\eps)}} \right)^2 \frac{r_\beta^{4(D-\eps)}}{H(\mf{u}_\beta,x_\beta,r_\beta)^2}  \cdot  \left( \frac{R_\beta}{r_\beta} \right)^{4(D-\eps-1)} \\
& \le C \frac{r_\beta^4 R_\beta^{4(D-\eps-1)} }{H(\mf{u}_\beta,x_\beta,r_\beta)^2},
\end{align*}
whence $H(\mf{u}_\beta,x_\beta,r_\beta)^2 \le C r_\beta^4 R_\beta^{4(D-\eps-1)}$. Finally, using also Lemmas \ref{lem: choice of r} and \ref{lem: H con m}, we deduce that
\begin{align*}
\beta^2\left( \sum_{i=1}^k u_{i,\beta}(x_\beta) \right)^8 & \le C \left(\beta H(\mf{u}_\beta,x_\beta,r)\right)^2 H(\mf{u}_\beta,x_\beta,r)^2 \le C\cdot \frac{1}{r_\beta^4} \cdot  r_\beta^4 R_\beta^{4(D-\eps-1)}
\end{align*}
and since $D>1$ and $R_\beta \to 0$ the last term vanishes as $\beta \to +\infty$, which is the desired result. 
\end{proof}

\begin{remark}
As already pointed out, in order to proof Theorem \ref{thm: global upper estimate} in presence of $f_{i,\beta} \not \equiv 0$ it is possible to combine the techniques used here with the almost monotonicity formulae introduced in \cite{SoZi} (see Theorem 3.14 and Lemma 4.7 therein). 
\end{remark}

\section{Uniform regularity of the interfaces and decay estimates III}\label{sec: Reif}
The aim of this section is to study the uniform regularity of the interfaces $\Gamma_\beta$, and in a second time to prove as a corollary Theorem \ref{corol: decay components vanishing}. 

Before proceeding, we make some remarks about Definition \ref{def: regular interface}, where we introduced $\mathcal{R}_\beta(\rho)$. First, since for $\beta$ finite the functions $\mf{u}_\beta$ are smooth, the function $(x, \rho) \mapsto N_\beta(\mf{u}_\beta, x, \rho)$ is continuous; in particular, for any $\rho>0$, $\mathcal{R}_{\beta}(\rho)$ is a relative open subset of $\Gamma_{\beta}$. In Definition \ref{def: regular interface}, in light of the dichotomy $N(\mf{u},x,0^+) = 1$ or $N(\mf{u},x,0^+) \ge 3/2$ (see Proposition \ref{prop: monot segregated}), we could replace $1/4$ with any positive number strictly less than $1/2$, without affecting the rest of the section. We observe also that, thanks to the monotonicity of the Almgren quotient, for a fixed $\mf{u}_\beta$ we can show the following monotonicity property of the proposed decomposition
\[
	\forall \rho_1, \rho_2, \; 0 < \rho_1 < \rho_2 \implies \mathcal{R}_{\beta}(\rho_1) \supset \mathcal{R}_{\beta}(\rho_2).
\]
The stratification induced by the previous construction on the free boundary $\Gamma_{\beta}$ may seem to be useless: indeed we have
\[
	\Gamma_{\beta} = \cup_{\rho > 0} \mathcal{R}_{\beta}(\rho).
\]
This is due to the fact that the maximum principle implies that all the functions $\mf{u}_\beta$ are strictly positive in $\Omega$, and thus for any $x \in \Omega$ we can easily prove that $N_\beta(\mf{u}_\beta, x, 0^+) = 0$. Nonetheless, the following result can be used to acquire the geometrical intuition behind the definition.
\begin{lemma}\label{lem: basic prop regular part}
Let us assume that $x_\beta \in \Gamma_{\beta}$, for every $\beta$.
\begin{itemize}
	\item If there exists $x_0 \in \mathcal{R}$ such that $x_\beta \to x_0$, then there exist $\rho > 0$ and $\bar \beta >0$ such that
\[
	x_\beta \in \mathcal{R}_{\beta}(\rho) \qquad \forall \beta > \bar \beta.
\]
	\item If there exists $x_0 \in \Sigma$ such that $x_\beta \to x_0$, then for every $\rho > 0$ there exists $\bar \beta >0$ such that
\[
	x_\beta \not \in \mathcal{R}_{\beta}(\rho) \qquad \forall \beta > \bar \beta.
\]
In particular, for any compact $K \Subset \Omega$ and $\rho>0$ there exists $s>0$ independent of $\beta$ such that 
\[
B_s(x) \cap \Sigma = \emptyset \quad \text{for every $x \in \mathcal{R}_\beta(\rho)$, for every $\beta$}.
\] 
\end{itemize}
\end{lemma}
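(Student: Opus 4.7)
The plan is to reduce both points to the following continuity statement: for any $\rho>0$ with $\overline{B_\rho(x_0)} \Subset \Omega$,
\[
N(\mf{u}_\beta,x_\beta,\rho) \longrightarrow N(\mf{u},x_0,\rho) \quad \text{as } \beta \to +\infty.
\]
For the numerator $H$, this is immediate from the local uniform convergence $\mf{u}_\beta \to \mf{u}$, and under \eqref{nontrivial} Proposition \ref{prop: monot segregated} guarantees $H(\mf{u},x_0,\rho)>0$, so no vanishing-denominator issues arise. For $E$, the Dirichlet part passes to the limit by the strong $H^1_{\loc}$ convergence established in \cite{NoTaTeVe,SoTaTeZi,Wa}, while the interaction contribution $\beta \int_{B_\rho(x_\beta)} \sum_{i<j} a_{ij} u_{i,\beta}^2 u_{j,\beta}^2$ tends to $0$ (this is a standard by-product of the segregation analysis: testing the equations in \eqref{system simplified} against the $u_{i,\beta}$'s on a slightly larger ball expresses this quantity in terms of Dirichlet energies plus boundary terms, all of which pass to the segregated limit where the interaction disappears).

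With this in hand, the first bullet follows at once. Since $x_0\in\mathcal{R}$, by definition $N(\mf{u},x_0,0^+)=1$, and by the monotonicity and continuity in $r$ of $N(\mf{u},x_0,\cdot)$ (Proposition \ref{prop: monot segregated}) one can choose $\rho>0$ with $\overline{B_{2\rho}(x_0)} \Subset \Omega$ so small that $N(\mf{u},x_0,\rho) \leq 1+\tfrac{1}{8}$. The continuity yields $N(\mf{u}_\beta,x_\beta,\rho) \leq 1+\tfrac{1}{4}$ for $\beta$ large enough, hence $x_\beta \in \mathcal{R}_\beta(\rho)$.

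The second bullet uses the dichotomy built into Proposition \ref{prop: monot segregated}: since $x_0\in\Sigma$ one has $N(\mf{u},x_0,0^+)\geq 3/2$, and monotonicity upgrades this to $N(\mf{u},x_0,\rho)\geq 3/2$ for every admissible $\rho>0$. The continuity statement then forces $N(\mf{u}_\beta,x_\beta,\rho)\geq 3/2 - \tfrac{1}{8} > 1+\tfrac{1}{4}$ for $\beta$ large, so $x_\beta \notin \mathcal{R}_\beta(\rho)$. For the ``in particular'' clause I would argue by contradiction: if it failed, there would exist a compact $K$, a radius $\rho>0$, and sequences $\beta_n \to +\infty$, $x_{\beta_n} \in \mathcal{R}_{\beta_n}(\rho)\cap K$, $y_n \in \Sigma$ with $|x_{\beta_n}-y_n|\to 0$. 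By compactness of $K$, up to a subsequence $x_{\beta_n}\to x_0\in K$; since $\mathcal{R}$ is relatively open in $\Gamma$ and $\Gamma$ is closed in $\Omega$, the set $\Sigma$ is closed in $\Omega$, so $x_0 \in \Sigma$. But $x_{\beta_n}\in\mathcal{R}_{\beta_n}(\rho) \subset \Gamma_{\beta_n}$ and $x_{\beta_n}\to x_0\in\Sigma$, which by the bullet just proved contradicts $x_{\beta_n}\in\mathcal{R}_{\beta_n}(\rho)$ for $n$ large.

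The only genuinely non-trivial step is the convergence of the interaction term in $E$; the pointwise bound in Theorem \ref{thm: global upper estimate} is of order $1$ and hence insufficient by itself, and one must invoke the strong $H^1_{\loc}$ convergence plus the energy identities from the segregation theory. Everything else (continuity of $H$, monotonicity, the $1$ vs.\ $3/2$ gap, and the final compactness argument) is routine once that input is accepted.
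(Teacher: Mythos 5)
Your proof is correct and follows essentially the same route as the paper: the paper likewise deduces $N(\mf{u}_\beta,x_\beta,\rho)\to N(\mf{u},x_0,\rho)$ from the $\mathcal{C}^0_{\loc}$ and strong $H^1_{\loc}$ convergence (the vanishing of the interaction term $\beta\int u_{i,\beta}^2u_{j,\beta}^2$ on compacts being part of the standard segregation results), chooses $\rho$ so that $N(\mf{u},x_0,\rho)<1+\tfrac{1}{8}$, and invokes the $1$ versus $3/2$ dichotomy together with the monotonicity of the Almgren quotient. The paper only writes out the first bullet and declares the rest ``similar''; your treatment of the second bullet and of the ``in particular'' clause (closedness of $\Sigma$ plus compactness of $K$) fills in exactly those omitted details.
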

\begin{proof}
We show only the first conclusion, since the second one is similar. Let $x_0 \in \mathcal{R}$; then 
\[
	\begin{split}
		N(\mf{u}, x_0,0^+) = 1 
		&\quad\implies N(\mf{u}, x_0, \rho) < 1 + \frac{1}{2 \cdot 4} \qquad \text{for some small $\rho$}\\
		&\quad\implies N(\mf{u}_\beta, x_\beta, \rho) < 1 + \frac{1}{4} 
	\end{split}
\]
for sufficiently large $\beta$, by the $\mathcal{C}^0_{\loc}(\Omega)$ and the strong $H^1_{\loc}(\Omega)$ convergence of $\mf{u}_\beta$ to $\mf{u}$.
\end{proof}

We now investigate the uniform regularity of the regular part of the subsets $\mathcal{R}_{\beta}(\rho) \cap K$, proving Theorem \ref{thm: reifenberg flat uniform}. Recall that $K$ is an arbitrary compact set in $\Omega$. In order to establish that $\mathcal{R}_{\beta}(\rho) \cap K$ enjoy what we defined as the \emph{uniform vanishing Reifenberg flatness condition}, we proceed in two steps. First of all, we show it under a smallness assumption. 

\begin{lemma}\label{lem: Reif small}
Let $K \Subset \Omega$ be a compact set, $\rho > 0$ and $C>0$. For $\beta$ sufficiently large, for any $\delta > 0$, $x_\beta \in \mathcal{R}_{\beta}(\rho) \cap K$ and $0 < r < C r_\beta(x_\beta)$ there exists a hyper-plane $H_{x_\beta,r} \subset \R^N$ containing $x_\beta$ such that
\[
	\dist_{\mathcal{H}}(\mathcal{R}_{\beta}(\rho) \cap B_r(x_\beta), H_{x_\beta,r} \cap B_r(x_\beta)) \leq \delta r.
\]
\end{lemma}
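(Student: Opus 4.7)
The plan is to argue by contradiction, using a blow-up at scale $r_{\beta_n}(x_n)$ to reduce the problem to the rigid structure of the entire limit profile produced by Theorem~\ref{thm: blow-up}. If the lemma fails, one extracts $\delta_0>0$, $\beta_n\to+\infty$, $x_n\in\mathcal{R}_{\beta_n}(\rho)\cap K$ and $0<r_n<Cr_{\beta_n}(x_n)$ such that no hyperplane through $x_n$ $\delta_0 r_n$-approximates $\mathcal{R}_{\beta_n}(\rho)\cap B_{r_n}(x_n)$ in Hausdorff distance. Setting $\tilde r_n:=r_{\beta_n}(x_n)\to 0$ (Lemma~\ref{lem: choice of r}), $s_n:=r_n/\tilde r_n\in(0,C]$, and considering the blow-up $\mathbf{v}_n$ from Theorem~\ref{thm: blow-up}, up to subsequences $x_n\to x_0\in K$, $s_n\to s_\infty\in(0,C]$, and $\mathbf{v}_n\to\mathbf{V}$ in $\mathcal{C}^2_{\loc}(\R^N)$, where $\mathbf{V}$ solves \eqref{entire system a_{ij}}, has at least two nontrivial components, and satisfies $0\in\Gamma_{\mathbf{V}}$. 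After rescaling, the contradiction reduces to proving Hausdorff convergence of $A_n:=((\mathcal{R}_{\beta_n}(\rho)-x_n)/\tilde r_n)\cap B_{s_n}$ to $H_0\cap B_{s_n}$ for a suitable hyperplane $H_0\ni 0$.

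The first key step is to identify $\mathbf{V}$. From $x_n\in\mathcal{R}_{\beta_n}(\rho)$ and the monotonicity of the Almgren quotient (Proposition~\ref{prop: almgren}) one has $N(\mathbf{u}_{\beta_n},x_n,r)<5/4$ for every $r\le\rho$; rescaling, $N(\mathbf{v}_n,0,s)<5/4$ for every $s\le\rho/\tilde r_n$, and since $\rho/\tilde r_n\to+\infty$ this yields $N(\mathbf{V},0,+\infty)\le 5/4<3/2$. The classification recalled in Section~\ref{sub: entire} (Corollary~1.9 in \cite{SoTe}, and Corollary~1.12 therein when \eqref{a_ij=1} is not assumed) then forces $\mathbf{V}$ to be $1$-dimensional with exactly two nontrivial components $V_1,V_2>0$ of linear growth, and $\Gamma_{\mathbf{V}}=\{V_1=V_2\}$ is a hyperplane $H_0\ni 0$ on which $|\nabla(V_1-V_2)|\ge c>0$ (from the monotone $1$-dimensional structure). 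The same type of argument applied to $\mathbf{u}$ itself gives $x_0\in\mathcal{R}$, a fact needed below. Hausdorff convergence of $(\Gamma_{\beta_n}-x_n)/\tilde r_n\cap\overline{B_C}$ to $H_0\cap\overline{B_C}$ then follows from $\mathcal{C}^2_{\loc}$ convergence: one direction uses that if $y_n\in\Gamma_{\mathbf{v}_n}$ with $y_n\to y_\infty$, passing to the limit in the equalities/inequalities defining $\Gamma_{\mathbf{v}_n}$ and recalling that only $V_1,V_2$ are nontrivial and both strictly positive, forces $V_1(y_\infty)=V_2(y_\infty)$, so $y_\infty\in H_0$; the reverse direction applies the implicit function theorem to $v_{1,n}-v_{2,n}\to V_1-V_2$ (whose gradient does not vanish on $H_0$) to construct $y_n\in\Gamma_{\mathbf{v}_n}$ near any $y_\infty\in H_0\cap B_{s_n}$, the remaining components being negligible at such points by $\mathcal{C}^0$ convergence.

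The main obstacle will be upgrading the Hausdorff approximation from $\Gamma_{\beta_n}$ to the a priori smaller set $\mathcal{R}_{\beta_n}(\rho)$: one must check that the points $x_n':=x_n+\tilde r_n y_n$ just constructed actually satisfy $N(\mathbf{u}_{\beta_n},x_n',\rho)<5/4$. The uniform Lipschitz estimate of \cite{SoZi} combined with the pointwise bound $\beta u_{i,\beta}^2u_{j,\beta}^2\le C$ from Theorem~\ref{thm: global upper estimate} makes the integrands in $H(\mathbf{u}_\beta,\cdot,\rho)$ and $E(\mathbf{u}_\beta,\cdot,\rho)$ uniformly Lipschitz in the base point, while \eqref{nontrivial} and $H(\mathbf{u}_{\beta_n},x_n,\rho)\to H(\mathbf{u},x_0,\rho)>0$ (recall $x_0\in\mathcal{R}$) provide a uniform positive lower bound on the denominator. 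This yields $|N(\mathbf{u}_{\beta_n},x_n',\rho)-N(\mathbf{u}_{\beta_n},x_n,\rho)|\le C\tilde r_n\to 0$ with $C$ independent of $n$. The delicate borderline case is when $N(\mathbf{u}_{\beta_n},x_n,\rho)$ tends to $5/4$ from below; to handle it one exploits the remark after Definition~\ref{def: regular interface} allowing the threshold $1/4$ to be replaced by any value in $(0,1/2)$, so that running the argument with a strictly smaller threshold produces the slack needed to conclude $N(\mathbf{u}_{\beta_n},x_n',\rho)<5/4$. Hence $x_n'\in\mathcal{R}_{\beta_n}(\rho)$ and $y_n\in A_n$, contradicting the assumed failure of the Hausdorff approximation and closing the argument.
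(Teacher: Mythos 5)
Your strategy is essentially the paper's: argue by contradiction, rescale at the scale $r_\beta(x_\beta)$, identify the blow-up limit $\mathbf{V}$ as the one-dimensional two-component profile, and then use the nondegeneracy of $\partial_{x_N}(V_1-V_2)$ together with the implicit function theorem to write the rescaled interface in $B_C$ as a graph with small slope over the limiting hyperplane, which gives the Hausdorff bound at every scale $r<Cr_\beta(x_\beta)$. (One small remark on the identification of $\mathbf{V}$: deducing one-dimensionality directly from $N(\mathbf{V},0,+\infty)\le 5/4<3/2$ uses the dichotomy of \cite{SoTe}, which the paper quotes only under assumption \eqref{a_ij=1}; for general $a_{ij}$ the safe route is the one you also mention, namely $x_0\in\mathcal{R}$ via Lemma \ref{lem: basic prop regular part} and then Corollary \ref{thm: classification limits regular part}, exactly as in the paper.)

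The one place where you genuinely diverge is the final step, where you insist on checking that the zeros of $v_{1,\beta}-v_{2,\beta}$ produced by the implicit function theorem actually belong to $\mathcal{R}_\beta(\rho)$ and not merely to $\Gamma_\beta$; the paper simply asserts the identity $\mathcal{R}_\beta^{(S)}(\rho)\cap B_C=\{v_{1,\beta}-v_{2,\beta}=0\}\cap B_C$ as a consequence of $\mathcal{C}^2_{\loc}$ convergence, so you are being more scrupulous here. Your continuity estimate $|N(\mathbf{u}_{\beta_n},x_n',\rho)-N(\mathbf{u}_{\beta_n},x_n,\rho)|\le C\tilde r_n$ is correct (uniform Lipschitz bounds and Theorem \ref{thm: global upper estimate} control the integrands, while \eqref{nontrivial} keeps $H(\mathbf{u}_{\beta_n},\cdot,\rho)$ away from zero), and it settles the matter whenever $N(\mathbf{u},x_0,\rho)<5/4$ strictly. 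However, your treatment of the borderline case $N(\mathbf{u}_{\beta_n},x_n,\rho)\to 5/4$ does not work as written: replacing $1/4$ by a smaller threshold redefines $\mathcal{R}_\beta(\rho)$ in the hypothesis as well as in the conclusion of the lemma, so the given points $x_n$ are still only known to satisfy $N(\mathbf{u}_{\beta_n},x_n,\rho)<5/4$ and no slack is gained — the same borderline difficulty simply reappears at the new threshold. As it stands, this last step is the one soft spot of your write-up; to be fair, it is precisely the point that the paper's own proof leaves implicit in the asserted set identity.
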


In the thesis of Theorem \ref{thm: reifenberg flat uniform} we required $R$ to be independent of $\beta$, thus the uniformity of the vanishing Reifenberg flatness of the ``regular part" of the interfaces. Here instead we prove a preliminary result in the case $R = C r_\beta$.

For future convenience, we recall that the notation $B_r$ is used for balls with center in $0$.
\begin{proof}
By contradiction, we suppose that there exist $\bar \delta >0$, $x_\beta \in \mathcal{R}_\beta(\rho) \cap K$ and $0<r_\beta'<C r_\beta$ such that
\[
\inf_H \dist_{\mathcal{H}}(\mathcal{R}_\beta(\rho) \cap B_{r_\beta'}(x_\beta) , H \cap B_{r_\beta'}(x_\beta)) \ge \bar \delta r_\beta' \qquad \forall \beta,
\]
where the infimum is taken over all the hyperplanes passing through $x_\beta$. Since the notion of Reifenberg flatness commutes with translations and scalings, the previous condition is equivalent to 
\begin{equation}\label{contr Reif step 1}
\inf_H \dist_{\mathcal{H}}\left(\mathcal{R}_\beta^{(S)}(\rho) \cap B_{r_\beta'/r_\beta(x_\beta)}  , H \cap B_{r_\beta'/r_\beta(x_\beta)} \right) \ge \bar \delta \frac{r_\beta'}{r_\beta(x_\beta)}
\end{equation}
for every $\beta$, where $\mathcal{R}_\beta^{(S)}(\rho)$ is obtained by $\mathcal{R}_\beta(\rho)$ after the change of variable $x = x_\beta + r_\beta y$, and now the infimum is taken over the hyperplanes through the origin. 

The contradiction will be achieved proving that $\mathcal{R}_\beta^{(S)}(\rho)$ are uniformly Reifenberg flat arround $0$ up to the scale $C$, in the sense that  for any $\delta>0$ and $0<r<C$ it results
\begin{equation}\label{4251}
\inf_H \dist_{\mathcal{H}}(\mathcal{R}_\beta^{(S)}(\rho) \cap B_r  , H \cap B_{r} ) \le \delta r \qquad \forall \beta.
\end{equation}
Since $r_\beta'/r_\beta(x_\beta) \le C$, this contradicts \eqref{contr Reif step 1} and completes the proof. To prove \eqref{4251}, we introduce as usual the sequence 
\[
\mf{v}_\beta(x):= \frac{\mf{u}_\beta(x_\beta + r_\beta x)}{H(\mf{u}_\beta,x_\beta,r_\beta)^{1/2}}.
\]
Since $x_\beta \in \mathcal{R}_\beta(\rho) \cap K$, up to a subsequence $x_\beta \to x_0$. By Proposition \ref{prop: interfaces are good approximation} we have $x_0 \in \Gamma = \mf\{\mf{u}=\mf{0}\}$, and by Lemma \ref{lem: basic prop regular part} it follows that $x_0 \in \mathcal{R}$, the regular part of $\Gamma$. As a consequence, Corollary \ref{thm: classification limits regular part} establishes that $\mf{v}_\beta \to \mf{V}$ in $\mathcal{C}^2_{\loc}(\R^N)$, where $\mf{V}$ is a $1$-dimensional solution of \eqref{entire system a_{ij}}. Up to a rotation and a relabelling, we can suppose that $\{V_1= V_2\} = \{x_N=0\}$ and $V_1, V_2$ are the only nontrivial components of $\mf{V}$. By $\mathcal{C}^2_{\loc}$ convergence, this implies that:
\begin{itemize}
\item $\mathcal{R}_\beta^{(S)}(\rho) \cap B_C = \{v_{1,\beta} -v_{2,\beta}=0\} \cap B_C $;
\item there exists $C_1>0$ such that $|\pa_{x_N} (v_{1,\beta}-v_{2,\beta})| > C_1>0$ in $B_C$, for every $\beta$;
\item for every $\delta>0$ there exists $\bar \beta>0$ such that $|\pa_{x_i} (v_{1,\beta}-v_{2,\beta})| < \delta/(C_1(N-1))$ in $B_C$ provided $\beta >\beta$.
\end{itemize}
Therefore, for $\beta>\bar \beta$ we can apply the implicit function theorem: there exists a $\mathcal{C}^1$ function $f_\beta$, defined on the projection $U_\beta$ of $\mathcal{R}_\beta^{(S)}(\rho) \cap B_C $ into $\R^{N-1}$, such that $\mathcal{R}_\beta^{(S)}(\rho) \cap B_C  = \{x_N= f_\beta(x')\}$. Moreover, $f_\beta(0)  = 0$ (since $0 \in \mathcal{R}_\beta^{(S)}(\rho) \cap B_C $) and $|\nabla' f_\beta| \le \delta$ in $U_\beta$. As a result, choosing $\bar H= \{x_N=0\}$, and denoting by $U_\beta^r$ the set $U_\beta \cap \{|x'|<r\}$, we have
\begin{align*}
	\dist_{\mathcal{H}}(\mathcal{R}_{\beta}^{(S)}(\rho) \cap B_r , \bar H \cap B_r )  &	= \sup_{\mathcal{R}_{\beta}^{(S)}(\rho) \cap B_r } |x_N| \le \sup_{U_\beta^r} |f_\beta| \\
	& \le  \sup_{U_\beta^r} | \nabla' f_\beta| |x'| \leq  \delta r,
\end{align*}
which gives the desired contradiction.
\end{proof}

\begin{proof}[Proof of Theorem \ref{thm: reifenberg flat uniform}]
We now conclude the proof of the uniform vanishing Reifenberg flatness of the sets $\mathcal{R}_\beta(\rho)$. 
By contradiction again, let us assume that there exist $\bar \delta >0$ and sequences $\beta_n \to +\infty$, $x_{n} \in \mathcal{R}_{\beta_n}(\rho) \cap K$, $r_{n} \to 0^+$ such that
\begin{equation}\label{eqn: failure reif}
	\dist_{\mathcal{H}}(\mathcal{R}_{\beta_n}(\rho) \cap B_{r_n}(x_{n}), H \cap B_{r_n}(x_{n})) \geq \bar \delta r_n
\end{equation}
for every $H$ hyperplane passing through $x_n$. We start by the simple observation that, thanks to Lemma \ref{lem: Reif small}, a constant $C >0$ such that $r_n < C r_{\beta_n}(x_{n})$ cannot exist: in other words, it must be
\begin{equation}\label{eqn: rn to infinity}
	\liminf_{n \to \infty} \frac{r_n}{r_{\beta_n}(x_{n})} = +\infty.
\end{equation}
Now we introduce the scaled functions
\[
	\mf{w}_n(x) = \frac{1}{\sqrt{H(\mf{u}_{\beta_n}, x_n, r_n)}} \mf{u}_{\beta_n}(x_n + r_n x).
\]
The equation for $\mf{w}_n$ is 
\[
\Delta w_{i,n} = r_n^2 H(\mf{u}_{\beta_n},x_n,r_n) \beta_n w_{i,n} \sum_{j \neq i} a_{ij} w_{j,n}^2,
\] 
and by \eqref{eqn: rn to infinity} and the choice of $r_{\beta_n}(x_n)$, Lemma \ref{lem: choice of r}, the interaction parameter is
\[
r_n^2 H(\mf{u}_{\beta_n},x_n,r_n) \beta_n = r_{\beta_n}^2 H(\mf{u}_{\beta_n},x_n,r_n) \beta_n \cdot \left(\frac{r_n}{r_{\beta_n}(x_n)}\right)^2 \to +\infty.
\]
Moreover, for any $R>1$ and $0<r<R$
\[
N(\mf{w}_n,0,r) \le N(\mf{w}_n,0,R) = N(\mf{u}_{\beta_n},x_n,r_n R) \le  N(\mf{u}_{\beta_n},x_n,\rho) \le \frac{5}{4}
\]
provided $n$ is sufficiently large, which implies
\[
\frac{d}{dr} \log H(\mf{w}_n,0,r) \le \frac{5}{2r} \quad \Longrightarrow \quad H(\mf{w}_n,0,R) = \frac{H(\mf{w}_n,0,R)}{H(\mf{w}_n,0,1)} \le R^{5/2}. 
\]
In turn, by subharmonicity, and since $R$ has been arbitrarily chosen, this ensures that $\{\mf{w}_n\}$ is locally bounded in $L^\infty$, and applying as usual \cite{SoTaTeZi} (see also \cite{NoTaTeVe,tt,Wa}) we finally infer that $\mf{w}_n \to \mf{W}  \in \mathcal{G}_{\loc}(\R^N)$, locally uniformly and in $H^1_{\loc}(\R^N)$. We recall that the main properties of the class $\mathcal{G}$ have been reviewed in Section \ref{sec: prel}, and we point out that $\mf{W} \not \equiv \mf{0}$ since the $L^2$-norm of $\mf{W}$ on the unit sphere is normalized to $1$. Directly from the convergence we deduce that $N(\mf{W},0,r) \le 5/4$ for every $r>0$. Actually a stronger estimate holds, since for any $r,\tilde r>0$ we have
\begin{align*}
N(\mf{W},0,r) &= \lim_{n \to \infty} N(\mf{w}_n,0,r)  = \lim_{n \to \infty} N(\mf{u}_{\beta_n},x_n,r_n r)\\
& \le \lim_{n \to \infty} N(\mf{u}_{\beta_n},x_n,\tilde r) = N(\mf{u},x_0,\tilde r),   
\end{align*}
where we used the compactness of $K$ to infer that $x_n \to x_0$. Notice that, by Lemma \ref{lem: basic prop regular part}, $x_0 \in \mathcal{R}$. Therefore, since $r$ and $\tilde r$ in the previous estimate are arbitrarily chosen, we can pass to the limit as $r \to +\infty$ and $\tilde r \to 0^+$, deducing that $N(\mf{W},0,+\infty) \le 1$. Using also the monotonicity of the Almgren quotient and the lower bound on $N(\mf{W},0,0^+)$ (see Proposition \ref{prop: monot segregated}), we conclude that 
\[
1 \le N(\mf{W},0,0^+) \le N(\mf{W},0,+\infty) \leq 1 \quad \Longrightarrow \quad N(\mf{W},0,r) = 1 \quad \forall r.
\]
As a consequence, up to a rotation and a relabelling $\mf{W} = \alpha(x_N^+, x_N^-, 0, \dots, 0)$ for some positive $\alpha$, and in particular $\{\mf{W}=\mf{0}\} = \{x_N = 0\}$. 

To complete the proof, we observe that scaling \eqref{eqn: failure reif} we have
\[
	\dist_{\mathcal{H}}(\mathcal{R}^{(S)}_{\beta_n}(\rho) \cap B_{1} , H \cap B_{1} ) \geq \bar \delta \quad \text{for every hyperplane $H$ passing in $0$},
\]
for every $\beta$. On the other hand, by the uniform convergence $\mf{w}_n \to \mf{W}$ it is not difficult to check that 
\begin{equation}\label{4821}
\dist_{\mathcal{H}}(\mathcal{R}^{(S)}_{\beta_n}(\rho) \cap B_{1} , \{x_N=0\} \cap B_{1} ) \to 0 \qquad \text{as $n \to +\infty$},
\end{equation}
which gives the sought contradiction (concerning the detailed verification of \eqref{4821}, we refer the interested reader to the proof of Lemma 5.3 in \cite{tt}, where the authors deal with a similar context).
\end{proof}

An important consequence of the Reifenberg flatness of the free boundary is given by a local separation property. We write that a set $\omega \subset \Omega$ \emph{separates $\Omega$ in a neighbourhood of $x \in \omega$} if there exists $r>0$ such that $B_r(x) \subset \Omega$ and $B_r(x) \setminus \omega$ consists of two connected components. As we shall see, the interface $\Gamma_\beta$ enjoys this important property in a neighbourhood of any point $x \in \mathcal{R}_\beta(\rho)$, with separation radius uniform in $x$. Consequently, we have that:
\begin{itemize}
	\item in a $R$-neighbourhood of $\mathcal{R}_\beta(\rho) \cap K$ (with $R$ independent of $\beta$), the interface $\Gamma_\beta$ never self-intersects;
	\item in a $R$-neighbourhood of $\mathcal{R}_\beta(\rho) \cap K$ (with $R$ independent of $\beta$), two densities dominate on the other ones.
\end{itemize}

\begin{proposition}\label{prp: local sep}
Let $K \Subset \Omega$ be a compact set and let $\rho > 0$. There exists $R > 0$ such that  $B_R(x_\beta) \cap \Gamma_\beta$ has exactly two connected components for every $x_\beta \in \mathcal{R}_\beta(\rho)$.
\end{proposition}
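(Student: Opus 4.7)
The plan is to combine the microscopic information furnished by Corollary \ref{thm: classification limits regular part} with the macroscopic uniform control of Theorem \ref{thm: reifenberg flat uniform}, producing a uniform-in-$\beta$ local picture of $\Gamma_\beta$ as a smooth hypersurface cutting $B_R(x_\beta)$ in two.

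At the microscopic scale $r_\beta(x_\beta)$, Lemma \ref{lem: basic prop regular part} ensures that any sequence $x_\beta \in \mathcal{R}_\beta(\rho) \cap K$ accumulates only on the regular part $\mathcal{R} \subset \Gamma$, so Corollary \ref{thm: classification limits regular part} applies and the blow-up $\mf{v}_\beta$ converges in $\mathcal{C}^2_{\loc}(\R^N)$ to the unique one-dimensional two-component profile $\mf{V} = (V_1,V_2,0,\ldots,0)$. Undoing the scaling, for $\beta$ large, inside $B_{C r_\beta(x_\beta)}(x_\beta)$ exactly two densities $u_{i_1,\beta}$ and $u_{i_2,\beta}$ strictly dominate, $\Gamma_\beta$ coincides with $\{u_{i_1,\beta} = u_{i_2,\beta}\}$, and $|\nabla(u_{i_1,\beta} - u_{i_2,\beta})| \ge c > 0$ uniformly on this ball. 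The implicit function theorem then represents $\Gamma_\beta$ there as a smooth graph and yields the desired two-piece decomposition at the microscopic scale.

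To upgrade to a uniform radius $R$ independent of $\beta$, I would invoke Theorem \ref{thm: reifenberg flat uniform}: choosing $\delta$ small enough depending only on the dimension, the vanishing Reifenberg flatness combined with the classical Reifenberg topological disk theorem guarantees that $\mathcal{R}_\beta(\rho) \cap B_R(x_\beta)$ is a bi-H\"older image of a flat $(N-1)$-dimensional disk, embedded as a hypersurface that splits $B_R(x_\beta)$ into two connected components uniformly in $\beta$ and in $x_\beta \in \mathcal{R}_\beta(\rho) \cap K$. The remaining point is to identify $\Gamma_\beta \cap B_R(x_\beta)$ with $\mathcal{R}_\beta(\rho) \cap B_R(x_\beta)$: if $y_\beta \in \Gamma_\beta \cap B_R(x_\beta) \setminus \mathcal{R}_\beta(\rho)$ existed, compactness would extract $y_\beta \to y_0$, and upper semicontinuity of the Almgren quotient (via Proposition \ref{prop: almgren} and the uniform convergence $\mf{u}_\beta \to \mf{u}$) would force $N(\mf{u}, y_0, \rho) \ge 5/4$, whence $y_0 \in \Sigma$. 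This contradicts $\dist(x_0, \Sigma) \ge s > 0$ from Lemma \ref{lem: basic prop regular part} provided $R < s$.

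The main obstacle is precisely this identification step: making the contradiction radius $R$ genuinely independent of $\beta$ relies on the uniform detachment $\dist(\mathcal{R}_\beta(\rho) \cap K, \Sigma) \ge s > 0$ encoded in Lemma \ref{lem: basic prop regular part}, together with the appropriate continuity of the frequency at a fixed outer radius $\rho$ under the convergence $\mf{u}_\beta \to \mf{u}$. A compactness-contradiction argument in the spirit of Section \ref{sec: decay 2} closes the loop, and joining the three steps delivers the two connected components of $B_R(x_\beta) \cap \Gamma_\beta$ asserted in the statement.
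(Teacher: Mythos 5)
Your overall strategy is the paper's: combine the uniform vanishing Reifenberg flatness of $\mathcal{R}_\beta(\rho)$ (Theorem \ref{thm: reifenberg flat uniform}) with the uniform detachment from $\Sigma$ given by Lemma \ref{lem: basic prop regular part}, and conclude by a Reifenberg-type separation argument. Two remarks on that part. The opening blow-up/implicit function theorem step at scale $r_\beta(x_\beta)$ is not needed here: it is exactly the content of Lemma \ref{lem: Reif small}, which has already been absorbed into Theorem \ref{thm: reifenberg flat uniform}. Moreover, quoting ``the classical Reifenberg topological disk theorem'' is slightly short of what is required: the disk theorem in its usual form gives a bi-H\"older parametrization of the set itself, while the separation of the complement needs either the ambient parametrization or the trapping-between-hyperplanes iteration; the paper proceeds via the latter, replicating \cite[Proposition 5.4]{tt}, which rests on \cite[Theorem 4.1]{HongWang}, with $R=\min\{s/2,R'/2\}$, $s$ from Lemma \ref{lem: basic prop regular part}.

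The genuine gap is in your identification of $\Gamma_\beta\cap B_R(x_\beta)$ with $\mathcal{R}_\beta(\rho)\cap B_R(x_\beta)$. If $y_\beta\in\Gamma_\beta\setminus\mathcal{R}_\beta(\rho)$, then $N(\mf{u}_\beta,y_\beta,\rho)\ge 1+\tfrac14$, and passing to the limit you indeed get $N(\mf{u},y_0,\rho)\ge 1+\tfrac14$; but this is a bound at the \emph{fixed outer radius} $\rho$, and by monotonicity of the Almgren quotient $N(\mf{u},y_0,0^+)\le N(\mf{u},y_0,\rho)$, so it gives no lower bound on $N(\mf{u},y_0,0^+)$ and does not force $y_0\in\Sigma$: a point of $\mathcal{R}$ may perfectly well have frequency above $1+\tfrac14$ at the scale $\rho$. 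Hence the contradiction you invoke does not materialize, and in fact the inclusion $\Gamma_\beta\cap B_R(x_\beta)\subset\mathcal{R}_\beta(\rho)$ cannot be expected for a fixed $\rho$ (membership in $\mathcal{R}_\beta(\rho)$ is a strict threshold at a fixed radius, which nearby interface points may just miss). The paper does not use such an identification: Proposition \ref{prp: local sep} is proved for the Reifenberg-flat sets $\mathcal{R}_\beta(\rho)$ themselves, and the description of the whole of $\Gamma_\beta$ inside $B_R(x)$ is obtained afterwards, in Proposition \ref{clean up regular}, by a different compactness argument: points of $\Sigma_\beta$ accumulate only on $\Sigma$ (Corollary \ref{thm: non-simple blow-up}, Theorem \ref{cor: improved decay singular sequence}), which is incompatible with the uniform distance $s$ of Lemma \ref{lem: basic prop regular part}, so $\Sigma_\beta\cap B_R(x)=\emptyset$; then exactly two densities dominate in $B_R(x)$ and $\Gamma_\beta\cap B_R(x)=\{u_{i_1,\beta}=u_{i_2,\beta}\}\cap B_R(x)$. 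Replacing your frequency argument with this exclusion of $\Sigma_\beta$ (together with the domination structure) is what actually closes the identification step.
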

The proof of this result is very similar to the one given in the limit setting by Tavares and Terracini in \cite{tt}, which was in turn based on the \cite[Theorem 4.1]{HongWang}. Thus, we only sketch it.

\begin{proof}
The fundamental observation here is that the family $\mathcal{R}_\beta(\rho)$ consists of sets which enjoy the uniform vanishing Reifenberg flatness property: as a consequence, if one proves that the local separation property holds for one of them, and the proof is based only on uniform-in-$\beta$ assumptions, the general case follows immediately. 

Let $\rho > 0$ be fixed, we consider a small $\delta$-flatness parameter ($\delta < 1/6$ for instance is sufficient), and let $R' = R'(\delta)$ the uniform-in-$\beta$ radius for which the $(\delta,R')$-Reifenberg flatness condition holds for each set $\mathcal{R}_\beta(\rho)$. Let also $s>0$ be defined by Lemma \ref{lem: basic prop regular part}. We define $R:= \min\{s/2,R/2\}$, and we show that this is a local separation radius for every $x \in \mathcal{R}_\beta(\rho)$, for every $\beta$. To this aim, we can replicate almost word by word the proof of \cite[Proposition 5.4]{tt} 
In particular, since $B_R(x) \cap \mathcal{R}_\beta(\rho)$ is $(\delta,R)$-Reifenberg flat and is detached from $\Sigma$, the set $B_R(x) \cap \mathcal{R}_\beta(\rho)$ is trapped between to parallel hyperplanes at distance $2\delta$, and the complementary region is given by two open and disjoint subsets of $B_R(x)$. We now consider inductively the radius $R/2^k$, $k \geq 1$ and balls $B_{R/2^k}(y)$ centered at points $y \in B_R(x) \cap \mathcal{R}_\beta(\rho)$ and the new connected components generated by the respective trapping hyperplanes. Thanks to the fact that $\delta$ is small, it is possible to show that each of these pairs of new components intersect one and only one of the connected components of the previous step. Joining all the corresponding sets we find two new connected components of $B_R(x)$ that are at distance $\delta/2^{k-1}$, and set $B_R(x) \cap \mathcal{R}_\beta(\rho)$ is again trapped between the two. Iterating this process we conclude the proof.
\end{proof}

Using the properties so far shown for $\mathcal{R}_\beta(\rho)$, we can better describe the behaviour of the functions near the interface set.
\begin{proposition}\label{clean up regular}
Let $K \Subset \Omega$ be a compact set, $\rho > 0$, and let $R > 0$ be the separation radius of Proposition \ref{prp: local sep}, independent of $\beta$. For any $x \in \mathcal{R}_\beta(\rho) \cap K$, there exist two indices $i_1 \neq i_2$ such that:
\begin{itemize}
	\item $\mathcal{R}_\beta(\rho) \cap B_R(x) = \{ u_{i_1, \beta} = u_{i_2, \beta} \} \cap B_R(x)$ and moreover the two connected components of $  B_R(x) \setminus \mathcal{R}_\beta(\rho)$ are given by $\{ u_{i_1, \beta} > u_{i_2, \beta} \} \cap B_R(x)$ and $\{ u_{i_1, \beta} < u_{i_2, \beta} \} \cap B_R(x)$;
	\item for any $j \neq i_1, i_2$, the density $u_{j,\beta}$ is exponentially small with respect to $u_{i_1,\beta}$ and $u_{i_2,\beta}$, in the sense that there exist $C_1,C_2>0$ such that 
	\[
\qquad		\sup_{ B_{R/2}(x)} u_{j,\beta} \leq C_1 e^{- C_1 \beta^{C_2}};
	\]
	\item in $B_{R/2}(x)$ the system reduces to 
	\[
		\begin{cases}
			-\Delta u_{i_1,\beta} = - \beta u_{i_1,\beta} u_{i_2,\beta}^2 - u_{i_1,\beta}o_\beta(1) \\
			-\Delta u_{i_2,\beta} = - \beta u_{i_2,\beta} u_{i_1,\beta}^2 - u_{i_2,\beta}o_\beta(1)
		\end{cases}
	\]
	where $o_\beta(1)$ is a (exponentially) small perturbation in the $L^{\infty}$-norm.
\end{itemize}
\end{proposition}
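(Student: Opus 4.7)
We address the three bullets in turn. The core geometric information comes from combining the local separation property of Proposition~\ref{prp: local sep} with the blow-up description of Corollary~\ref{thm: classification limits regular part}; Lemma~\ref{lem: decay} then converts the resulting magnitude comparisons into the required exponential decay for the non-dominant densities.

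\emph{Identification of the dominant indices (first bullet).} Proposition~\ref{prp: local sep} provides $R>0$ independent of $\beta$ such that $B_R(x)\setminus\Gamma_\beta$ is the disjoint union of exactly two connected open sets $A_+$ and $A_-$. Outside $\Gamma_\beta$ the pointwise maximum of $\mathbf{u}_\beta$ is attained at a single index, and continuity on a connected component keeps this index constant; call it $i_1$ on $A_+$ and $i_2$ on $A_-$. The two indices are distinct since the dominant index must jump across $\Gamma_\beta$. To rule out a third index matching the common value $u_{i_1,\beta}=u_{i_2,\beta}$ anywhere on $\Gamma_\beta\cap B_R(x)$, we apply Corollary~\ref{thm: classification limits regular part} at $x$: by Lemma~\ref{lem: basic prop regular part} any accumulation point of $\{x\}$ lies in $\mathcal{R}$, so the rescaled profile $\mathbf{v}_\beta$ from Theorem~\ref{thm: blow-up} converges in $\mathcal{C}^2_{\loc}$ to a $1$-dimensional solution of \eqref{system 2} whose only non-trivial components are $V_{i_1}$ and $V_{i_2}$. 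Hence for $\beta$ large, uniformly in $x\in\mathcal{R}_\beta(\rho)\cap K$, every other density is strictly smaller than $u_{i_1,\beta}=u_{i_2,\beta}$ on $\Gamma_\beta\cap B_R(x)$, and the equality $\mathcal{R}_\beta(\rho)\cap B_R(x)=\{u_{i_1,\beta}=u_{i_2,\beta}\}\cap B_R(x)$ follows, together with the description of its complement as $\{u_{i_1,\beta}>u_{i_2,\beta}\}\cup\{u_{i_1,\beta}<u_{i_2,\beta}\}$.

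\emph{Exponential smallness of $u_{j,\beta}$ for $j\neq i_1,i_2$ (second bullet).} For such $j$ the equation yields
\[
    -\Delta u_{j,\beta}=-\beta u_{j,\beta}\sum_{\ell\neq j}a_{j\ell}u_{\ell,\beta}^2\le -c\,\beta\bigl(u_{i_1,\beta}^2+u_{i_2,\beta}^2\bigr)u_{j,\beta}\quad\text{in }B_R(x),
\]
with $c>0$ independent of $\beta$. The task therefore reduces, via Lemma~\ref{lem: decay}, to a uniform lower bound of the form $u_{i_1,\beta}^2+u_{i_2,\beta}^2\ge c_\varepsilon\beta^{-1/2-2\varepsilon}$ on a slightly larger ball than $B_{R/2}(x)$. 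This bound is produced by three ingredients: (a) a uniform-in-$x$ version of Theorem~\ref{thm: lowe estimate} on the interface $\Gamma_\beta\cap B_R(x)$, itself obtained from the pointwise statement by a standard compactness argument, which gives $u_{i_m,\beta}\ge C_\varepsilon\beta^{-1/4-\varepsilon}$ there; (b) the uniform Lipschitz bound of \cite{SoZi}, which propagates this pointwise estimate to a $\beta^{-1/4-\varepsilon}$-tubular neighborhood of $\Gamma_\beta$; (c) outside that tubular neighborhood, the identification of the dominant index on each of $A_\pm$ from the first bullet together with the uniform convergence $\mathbf{u}_\beta\to\mathbf{u}$ and the linear lower bound $u_{i_m}(y)\gtrsim\mathrm{dist}(y,\Gamma)$ valid near the regular part of $\Gamma$. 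With the resulting killing rate $K\sim\beta^{1/2-2\varepsilon}$, Lemma~\ref{lem: decay} yields $\sup_{B_{R/2}(x)}u_{j,\beta}\le C\exp(-C\beta^{1/4-\varepsilon})$, matching the claim with $C_2=1/4-\varepsilon$.

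\emph{Reduced system and main obstacle.} Once each $u_{j,\beta}$ with $j\neq i_1,i_2$ is exponentially small in $L^\infty(B_{R/2}(x))$, the competition terms $\beta a_{i_m j}u_{i_m,\beta}u_{j,\beta}^2$ appearing in the $u_{i_m,\beta}$-equation are bounded in $L^\infty$ by $u_{i_m,\beta}$ times $\beta\sum_{j\neq i_1,i_2}\|u_{j,\beta}\|_\infty^2$, which is itself exponentially small because the polynomial factor $\beta$ is absorbed by the exponential; substituting gives the reduced two-component system of the third bullet. The main obstacle is the uniform lower bound for $u_{i_1,\beta}^2+u_{i_2,\beta}^2$ used in the second bullet: a constant lower bound is clearly impossible near $\Gamma_\beta$, and the key conceptual point is that even the $\beta^{-1/2-2\varepsilon}$-order lower bound provided by Theorem~\ref{thm: lowe estimate} feeds Lemma~\ref{lem: decay} with a killing rate of order $\beta^{1/2-2\varepsilon}$, which translates into \emph{exponential} (rather than merely polynomial) decay of the non-dominant densities. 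Making this inequality uniform across $x\in\mathcal{R}_\beta(\rho)\cap K$ is where the preceding Lipschitz bounds, the sharp lower estimate of Theorem~\ref{thm: lowe estimate}, and the separation of Proposition~\ref{prp: local sep} must all be brought to bear simultaneously.
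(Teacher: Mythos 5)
Your overall scheme for the second and third bullets (bound the killing coefficient $\beta\bigl(u_{i_1,\beta}^2+u_{i_2,\beta}^2\bigr)$ from below by a positive power of $\beta$, then invoke Lemma \ref{lem: decay}) is the same as the paper's, but the way you produce that lower bound away from the interface does not work. Step (c) rests on the uniform convergence $\mathbf{u}_\beta\to\mathbf{u}$ combined with the linear lower bound $u_{i_m}(y)\gtrsim\dist(y,\Gamma)$; since the convergence carries no rate in $\beta$, at points whose distance from the interface is of order $\beta^{-1/4-\eps}$ --- and in the whole intermediate region between that scale and a fixed scale --- the error $\|u_{i_m,\beta}-u_{i_m}\|_{L^\infty}=o_\beta(1)$ may exceed $\dist(y,\Gamma)$, and no bound of order $\beta^{-1/4-\eps}$ follows; the Lipschitz propagation of step (b) only covers a tube of width $\sim\beta^{-1/4-\eps}$ around $\Gamma_\beta$, so the region in between is uncontrolled. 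The paper avoids any such decomposition: using the uniform Almgren bound of Lemma \ref{lem: N bounded} (estimate \eqref{universal bound on N}) and running the doubling argument of Theorem \ref{thm: lower general} centred at an \emph{arbitrary} point $y\in B_{3R/4}(x)$ (Lemmas \ref{lem: choice of r} and \ref{lem: H con m} do not require the centre to lie on $\Gamma_\beta$, cf.\ Remark \ref{rem: conv also not on gamma}), it proves the pointwise lower bound \eqref{eqn lower clean up} for the full sum $\sum_i u_{i,\beta}$ on all of $B_{3R/4}(x)$, with a non-sharp exponent $-1/2+\tilde C$, and then transfers it to $u_{i_1,\beta}+u_{i_2,\beta}$ by domination; a non-sharp polynomial rate is enough, since Lemma \ref{lem: decay} upgrades it to stretched-exponential decay. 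Note also that citing Theorem \ref{thm: lowe estimate} is circular as the paper is organized: its proof uses Theorem \ref{corol: decay components vanishing}, which is deduced from the very proposition you are proving; at interface points you should instead use Theorem \ref{thm: lower general} with $D=1$ (there the two coincident maximal components control the sum), and even then the uniform-in-$x$ version you invoke is not a routine compactness add-on but essentially the content of \eqref{eqn lower clean up}, which needs the uniform frequency bound \eqref{universal bound on N}.

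The first bullet also has two unjustified jumps. First, Corollary \ref{thm: classification limits regular part} describes $\mathbf{u}_\beta$ only in balls of radius $O(r_\beta(x))$ around the blow-up centre, with $r_\beta(x)\to 0$, so it cannot exclude that at points of $\Gamma_\beta\cap B_R(x)$ far from $x$ (at the fixed scale $R$) a third component reaches the common maximal value, or that two components coincide degenerately; the paper handles this by showing $B_R(x)\cap\Sigma_\beta=\emptyset$ uniformly in $\beta$, through a compactness argument combining Theorem \ref{cor: improved decay singular sequence} (equivalently Corollary \ref{thm: non-simple blow-up}: sequences in $\Sigma_\beta$ accumulate only on $\Sigma$) with the second item of Lemma \ref{lem: basic prop regular part} and the choice $R\le s/2$, which by the definition of $\Sigma_\beta$ forces exactly two nondegenerately coincident maximal components along $\Gamma_\beta\cap B_R(x)$. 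Second, your assertion that ``the dominant index must jump across $\Gamma_\beta$'' is precisely the exclusion of self-segregation, which is not automatic: the paper derives it from the multiplicity-two property of regular points, \cite[Section 10]{DaWaZh}. Without substitutes for these two steps the identification $\mathcal{R}_\beta(\rho)\cap B_R(x)=\{u_{i_1,\beta}=u_{i_2,\beta}\}\cap B_R(x)$, with distinct dominant indices on the two components of the complement, is not established.
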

As we shall see, Theorem \ref{corol: decay components vanishing} is a simple consequence of this proposition together with the compactness of $K$ and the definition of $\mathcal{R}_\beta(\rho)$.
\begin{proof}
For $x \in \mathcal{R}_\beta(\rho) \cap K$, the set $B_R(x) \setminus \Gamma_\beta$ is given by two connected components. By Lemma \ref{lem: basic prop regular part} and by the choice of the local separation radius $R \le s/2$, it follows also that $B_R(x) \cap \Sigma_\beta =\emptyset$, where we recall that the singular part of the interface was introduced in Definition \ref{def singular interface}. Indeed, if this is not the case we can find a sequence $x_\beta \in K \cap \mathcal{R}_\beta(\rho)$ and, correspondingly, $y_\beta \in B_R(x_\beta) \cap \Sigma_\beta$. By compactness and Corollary \ref{cor: improved decay singular sequence}, we deduce that $y_\beta \to y \in \Sigma$, in contradiction with the second point in Lemma \ref{lem: basic prop regular part}  and the fact that $R \le s/2$. Therefore, in each of the connected components of $B_R(x) \setminus \Gamma_\beta$, one function dominates the others $k-1$, and by \cite[Section 10]{DaWaZh} the two dominating functions must be different. 
We explicitly remark that, if necessary replacing $R$ with a smaller quantity, it is possible to assume that 
\[
\text{the closure of }\left(\bigcup_{\beta} \bigcup_{x \in \mathcal{R}_\beta(\rho) \cap K} B_{R}(x)\right) \quad \text{is a compact subset of $\Omega$}.
\] 
Therefore, by Lemma \ref{lem: N bounded}, there exists $\bar C>0$ independent of $\beta$ such that
\begin{equation}\label{universal bound on N}
\sup_{\beta} \, \sup_{x \in \mathcal{R}_\beta(\rho) \cap K} \, \sup_{y \in B_R(x)} N(\mf{u}_\beta,y,R/4) \le \bar C.
\end{equation}

Let $\tilde C:= 1/(2+2 \bar C)$. We claim that there exists $C>0$ such that
\begin{equation}\label{eqn lower clean up}
\inf_{x \in \mathcal{R}_\beta(\rho) \cap K}	\inf_{y \in B_{3R/4}(x)} \sum_{i = 1}^{k} u_{i,\beta}(y) \geq C \beta^{-\frac12 + \tilde C}.
\end{equation}
To prove the previous claim, we argue as in Theorem \ref{thm: lowe estimate}. Suppose by contradiction that the claim is not true: then there exist sequences $\beta \to +\infty$, $x_\beta \in \mathcal{R}_\beta(\rho)$ and $y_\beta \in B_{3R/4}(x_\beta)$ such that
\begin{equation}\label{abs eqn lower bound clean up}
\lim_{\beta \to +\infty} \beta^{\frac{1}{2}-\tilde C}\sum_{i =1}^k u_{i,\beta}(y_\beta) =0.
\end{equation}

Thus $y_\beta \to \bar y \in \Omega$, and since \eqref{nontrivial} is in force, we find a sequence $r_\beta = r_\beta(y_\beta) \to 0$ as in Lemma \ref{lem: choice of r}. Moreover, recalling \eqref{universal bound on N} and \eqref{prop: almgren}, we have also 
\[
\frac{d}{dr}\log H(\mf{u}_\beta,y_\beta,r) \le \frac{2 \bar C}{r} \qquad \forall 0 < r < \frac{R}{4},
\] 
whence by \eqref{nontrivial} we infer
\[
\frac{H(\mf{u}_\beta,y_\beta,r_\beta)}{r_\beta^{2 \bar C}} \ge \frac{H(\mf{u}_\beta,y_\beta,R/4)}{R^{2 \bar C}} \ge C.
\] 
This estimate can be used as in Theorem \ref{thm: lowe estimate}: thanks to Lemmas \ref{lem: choice of r} and \ref{lem: H con m},
\begin{align*}
\left(\sum_{i=1}^k u_{i,\beta}(y_\beta)\right)^2 & \ge C 
H(\mf{u}_\beta,y_\beta,r_\beta)  \ge C  {r_\beta^{2 \bar C}} \ge \frac{C}{H(\mf{u}_\beta,y_\beta,r_\beta)^{\bar C} \beta^{\bar C} } \\
& \ge \frac{C}{\beta^{\bar C}} \left(\sum_{i=1}^k u_{i,\beta}(y_\beta)\right)^{-2 \bar C}.
\end{align*}
It is not difficult to obtain a contradiction with \eqref{abs eqn lower bound clean up}, thus proving claim \eqref{eqn lower clean up}. 

By the local separation property we know that for any $x \in \mathcal{R}_\beta(\rho)$ there are two indices $i_1,i_2$ such that the functions $u_{i_1,\beta}$ and $u_{i_2,\beta}$ are dominating the remaining $k-2$ components in $B_{3R/4}(x)$. Combining this with \eqref{eqn lower clean up}, we obtain
\[
	\inf_{y \in B_{3R/4}(x)} \left( u_{i_1,\beta}(y) + u_{i_2,\beta}(y) \right) \geq C \beta^{-\frac12 + \tilde C}
\]
(here $x$ depends on $\beta$, and $i_1,i_2$ could depend both on $x$ and on $\beta$, but we do not stress this to keep the notation simple; what it is important is that $R$ is independent of $\beta$). To complete the proof, we shall use the previous estimate in the equation satisfied by the function $u_{j,\beta}$, $j \neq i_1, i_2$ in the ball $B_{3R/4}(x)$: this gives
\[
	- \Delta u_{j,\beta} = - \beta u_{j,\beta} \sum_{i \neq j} u_{i,\beta}^2 \leq - C \beta u_{j,\beta} \left( u_{i_1,\beta} + u_{i_2,\beta} \right)^2 \leq - C \beta^{2\tilde C} u_{j,\beta},
\]
and thus, invoking Lemma \ref{lem: decay} and assumption \eqref{boundedness}, we finally infer
\[
	\sup_{B_{R/2}(x)} u_{j,\beta} \leq C e^{- C \beta^{2\tilde C}},
\]
proving the second point in the thesis. The third point follows easily.
\end{proof}

Theorem \ref{corol: decay components vanishing} is a simple corollary of the previous statement.  
\begin{proof}[Proof of Theorem \ref{corol: decay components vanishing}]
Under the assumptions of the corollary, there exists $x_\beta \in \Gamma_\beta$ such that $x_\beta \to x_0$, see Proposition \ref{prop: interfaces are good approximation}. Moreover, $x_\beta \not \in \Sigma_\beta$, otherwise we would have a contradiction with Corollary \ref{thm: non-simple blow-up}. We claim that there exists $\rho>0$ (independent of $\beta$) such that $x_\beta \in \mathcal{R}_\beta(\rho) \cap K$ for every $\beta$. Once that this is proved, the thesis follows by Proposition \ref{clean up regular}. Suppose by contradiction that a value $\rho$ as before does not exist. Then there exists $\rho_\beta \to 0^+$ such that
\[
N(\mf{u}_\beta,x_\beta,\rho_\beta) \ge 1+ \frac{1}{4}.
\]
On the other hand, since $x_0 \in \mathcal{R}$ there exists $\bar r>0$ such that $N(\mf{u},x_0,\bar r) \le 1+1/8$, and by monotonicity of the Almgren quotient and the usual convergence we easily reach a contradiction:
\[
1+\frac{1}{8}  >N(\mf{u},x_0,\bar r)  = \lim_{\beta \to +\infty} 
N(\mf{u}_\beta,x_\beta,\bar r) 
 \ge \lim_{\beta \to +\infty}  N(\mf{u}_\beta,x_\beta,\rho_\beta) \ge 1+ \frac{1}{4}.
\]
This proves the existence of $\rho$, and in turn the desired result.
\end{proof}

We conclude this section with the:

\begin{proof}[Proof of Proposition \ref{prop: non C^1}]
We can provide a counterexample to the convergence of the gradients. As reviewed in the preliminaries, there exists a unique solution to the system of ordinary differential equations
\[
\begin{cases}
u'' = uv^2 \\
v'' = u^2 v \\
u,v>0
\end{cases} \quad \text{in $\R$}, \quad \text{with $u'(+\infty) = 1$ and $v(x) = u(-x)$}.
\]
Notice that, consequently, for the (constant) Hamiltonian function we have 
\[
(u')^2(x) + (v')^2(x) -u^2(x)v^2(x) =1 \qquad \forall x \in \R.
\]
Let us consider 
\[
(u_R(x),v_R(x)):= \frac{1}{R}(u(Rx), v(Rx)).
\]
This is a sequence of solutions to \eqref{system simplified} with $\beta(R)=R^4 \to +\infty$, and it is not difficult to deduce by usual arguments that it is locally uniformly bounded in $L^\infty$. Thus, by \cite{SoTaTeZi} (see also \cite{NoTaTeVe,Wa}), it is convergent in $\mathcal{C}^0_{\loc}(\R)$ and in $H^1_{\loc}(\R)$, up to a subsequence, to a limit profile $(U,V)$, such that $U-V$ is harmonic, and thus affine, in $\R$. Since $u'_R(1) \to 1$ as $R \to +\infty$, and since $u_R \to U$ in $\mathcal{C}^1_{\loc}(\R \setminus\{0\})$, we deduce that $(U,V)=(x^+,x^-)$. Let us suppose now by contradiction that $u_R-v_R \to U-V$ in $\mathcal{C}^1([-\eps,\eps])$ for some $\eps>0$; then, recalling the symmetry of the solution, we infer that
\[
1= U'(0)-V'(0) = \lim_{R \to \infty} u_R'(0) - v_R'(0) = u'(0) -v'(0) = 2 u'(0),
\]
so that $u'(0) =-v'(0) = 1/2$. Coming back to the definition of the energy, we finally obtain
\[
1= \frac{1}{2} - u^2(0) v^2(0) < 1,
\]
a contradiction.
\end{proof}

\noindent \textbf{Acknowledgements:} part of this work was carried out while Nicola Soave was visiting the Centre d'Analyse et de Math\'{e}matique Sociales in Paris, and he wishes to thank for the hospitality. The authors are partially supported through the project ERC Advanced Grant 2013 n. 339958 ``Complex Patterns for Strongly Interacting Dynamical Systems - COMPAT''. Alessandro Zilio is also partially supported by the ERC Advanced Grant 2013 n. 321186 ``ReaDi -- Reaction-Diffusion Equations, Propagation and Modelling''.


\begin{thebibliography}{10}

\bibitem{AkAn}
N.~Akhmediev and A.~Ankiewicz.
\newblock Partially coherent solitons on a finite background.
\newblock {\em Phys. Rev. Lett.}, 82:2661, 1999.

\bibitem{BeLiWeZh}
H.~Berestycki, T.-C. Lin, J.~Wei, and C.~Zhao.
\newblock On {P}hase-{S}eparation {M}odels: {A}symptotics and {Q}ualitative
  {P}roperties.
\newblock {\em Arch. Ration. Mech. Anal.}, 208(1):163--200, 2013.

\bibitem{BeTeWaWe}
H.~Berestycki, S.~Terracini, K.~Wang, and J.~Wei.
\newblock On entire solutions of an elliptic system modeling phase separations.
\newblock {\em Adv. Math.}, 243:102--126, 2013.

\bibitem{CaffLin}
L.~Caffarelli and F.-H. Lin.
\newblock Singularly perturbed elliptic systems and multi-valued harmonic
  functions with free boundaries.
\newblock {\em J. Amer. Math. Soc.}, 21(3):847--862, 2008.

\bibitem{CaffLin2010}
L.~A. Caffarelli and F.~H. Lin.
\newblock Analysis on the junctions of domain walls.
\newblock {\em Discrete Contin. Dyn. Syst.}, 28(3):915--929, 2010.

\bibitem{ChangLinLinLin}
S.-M. Chang, C.-S. Lin, T.-C. Lin, and W.-W. Lin.
\newblock Segregated nodal domains of two-dimensional multispecies
  {B}ose-{E}instein condensates.
\newblock {\em Phys. D}, 196(3-4):341--361, 2004.

\bibitem{CoTeVe2002}
M.~Conti, S.~Terracini, and G.~Verzini.
\newblock Nehari's problem and competing species systems.
\newblock {\em Ann. Inst. H. Poincar\'e Anal. Non Lin\'eaire}, 19(6):871--888,
  2002.

\bibitem{CoTeVe2003}
M.~Conti, S.~Terracini, and G.~Verzini.
\newblock An optimal partition problem related to nonlinear eigenvalues.
\newblock {\em J. Funct. Anal.}, 198(1):160--196, 2003.

\bibitem{ctv}
M.~Conti, S.~Terracini, and G.~Verzini.
\newblock Asymptotic estimates for the spatial segregation of competitive
  systems.
\newblock {\em Adv. Math.}, 195(2):524--560, 2005.

\bibitem{DaWaZh}
E.~N. Dancer, K.~Wang, and Z.~Zhang.
\newblock The limit equation for the {G}ross-{P}itaevskii equations and {S}.
  {T}erracini's conjecture.
\newblock {\em J. Funct. Anal.}, 262(3):1087--1131, 2012.

\bibitem{Dip}
S.~Dipierro.
\newblock Geometric inequalities and symmetry results for elliptic systems.
\newblock {\em Discrete Contin. Dyn. Syst.}, 33(8):3473--3496, 2013.

\bibitem{Fa}
A.~Farina.
\newblock Some symmetry results for entire solutions of an elliptic
              system arising in phase separation
\newblock {\em Discrete and Continuous Dynamical Systems}, 34(6):2505--2511,
  2014.

\bibitem{FaSo}
A.~Farina and N.~Soave.
\newblock Monotonicity and 1-dimensional symmetry for solutions of an elliptic
  system arising in {B}ose-{E}instein condensation.
\newblock {\em Arch. Ration. Mech. Anal.}, 213(1):287--326, 2014.

\bibitem{HanHardtLin}
Q.~Han, R.~Hardt, and F.~Lin.
\newblock Geometric measure of singular sets of elliptic equations.
\newblock {\em Comm. Pure Appl. Math.}, 51(11-12):1425--1443, 1998.

\bibitem{HongWang}
G.~Hong and L.~Wang.
\newblock A geometric approach to the topological disk theorem of {R}eifenberg.
\newblock {\em Pacific J. Math.}, 233(2):321--339, 2007.

\bibitem{NoTaTeVe}
B.~Noris, H.~Tavares, S.~Terracini, and G.~Verzini.
\newblock Uniform {H}\"older bounds for nonlinear {S}chr\"odinger systems with
  strong competition.
\newblock {\em Comm. Pure Appl. Math.}, 63(3):267--302, 2010.

\bibitem{rtt}
M.~Ramos, H.~Tavares, and S.~Terracini.
\newblock Existence and regularity of solutions to optimal partitions problems
  involving laplacian eigenvalues.
\newblock preprint 2014.

\bibitem{SoTaTeZi}
N.~Soave, H.~Tavares, S.~Terracini, and A.~Zilio.
\newblock {H}\"older bounds and regularity of emerging free boundaries for
  strongly competing {S}chr\"odinger equations with nontrivial grouping,
  preprint 2015.

\bibitem{SoTe}
N.~Soave and S.~Terracini.
\newblock Liouville theorems and 1-dimensional symmetry for solutions of an
  elliptic system modelling phase separation.
\newblock {\em Adv. Math.}, 279:29--66, 2015.

\bibitem{SoZi2}
N.~Soave and A.~Zilio.
\newblock Multidimensional solutions for an elliptic system modelling phase
  separation, preprint 2015.

\bibitem{SoZi}
N.~Soave and A.~Zilio.
\newblock Uniform bounds for strongly competing systems: the optimal lipschitz
  case.
\newblock \emph{Arch. Ration. Mech. Anal.}, in press. DOI:
  10.1007/s00205-015-0867-9.

\bibitem{SoZi1}
N.~Soave and A.~Zilio.
\newblock Entire solutions with exponential growth for an elliptic system
  modelling phase separation.
\newblock {\em Nonlinearity}, 27(2):305--342, 2014.

\bibitem{tt}
H.~Tavares and S.~Terracini.
\newblock Regularity of the nodal set of segregated critical configurations
  under a weak reflection law.
\newblock {\em Calc. Var. Partial Differential Equations}, 45(3-4):273--317,
  2012.

\bibitem{TaTePoin}
H.~Tavares and S.~Terracini.
\newblock Sign-changing solutions of competition-diffusion elliptic systems and
  optimal partition problems.
\newblock {\em Ann. Inst. H. Poincar\'e Anal. Non Lin\'eaire}, 29(2):279--300,
  2012.

\bibitem{Timm}
E.~Timmermans.
\newblock Phase separation of {B}ose-{E}instein condensates.
\newblock {\em Phys. Rev. Lett.}, 81:5718--5721, 1998.

\bibitem{Wa}
K.~Wang.
\newblock On the {D}e {G}iorgi type conjecture for an elliptic system modeling
  phase separation.
\newblock {\em Comm. Partial Differential Equations}, 39(4):696--739, 2014.

\bibitem{Wa2}
K.~Wang.
\newblock Harmonic approximation and improvement of flatness in a singular
  perturbation problem.
\newblock {\em Manuscripta Math.}, 146(1-2):281--298, 2015.

\bibitem{WeiWeth}
J.~Wei and T.~Weth.
\newblock Asymptotic behaviour of solutions of planar elliptic systems with
  strong competition.
\newblock {\em Nonlinearity}, 21(2):305--317, 2008.

\end{thebibliography}

\end{document}